\documentclass[12pt, a4]{amsart}
\usepackage{geometry}
\usepackage{amsxtra,amssymb,amsthm,amsmath,amscd,mathrsfs, epsfig, eufrak}
\usepackage{amscd, amsmath, mathrsfs, amssymb, amsthm, amsxtra, bbding, epsfig, eucal, eufrak, graphicx, latexsym, mathrsfs, mathbbol, bbold}
\usepackage[all]{xy}

\usepackage{tikz-cd}

\geometry{left=1.5cm,right=1.5cm,top=2cm,bottom=2cm}
\usepackage{tikz}
\usepackage{appendix}
\usepackage[normalem]{ulem}
\usepackage{soul}
\usepackage{color}

\setstcolor{red}

\setstcolor{blue}

\setstcolor{cyan}

\setstcolor{orange}
\makeatletter
\@namedef{subjclassname@2010}{%
 \textup{2010} Mathematics Subject Classification}
\makeatother

\paperheight=280mm
\textwidth=165mm
\oddsidemargin=0mm
\evensidemargin=0mm

\def \P {{\mathbb P}}
\def \Q {{\mathbb Q}}
\def \R {{\mathbb R}}

\def \Z {{\mathbb Z}}

\def\tr{\mathop{\rm tr}\nolimits}


\def \d {\,{\rm d}}
\def\re{{\Re e\,}}

\def\leq{\leqslant}
\def\geq{\geqslant}
\def\le{\leqslant}
\def\ge{\geqslant}

\newcommand{\f}{\frac}

\theoremstyle{plain}
\newtheorem{theorem}{Theorem}[section]
\newtheorem{proposition}{Proposition}[section]
\newtheorem{lemma}[proposition]{Lemma}
\newtheorem{corollary}[theorem]{Corollary}

\theoremstyle{remark}

\numberwithin{equation}{section}
\addtocounter{footnote}{1}
\DeclareMathOperator{\diag}{diag}

\begin{document}

\title[\tiny Manin's conjecture for singular cubic hypersurfaces]
{Manin's conjecture \\ for singular cubic hypersurfaces}
\author{\tiny Jianya Liu, Tingting Wen \& Jie Wu} 

\address{%
Jianya Liu
\\
School of Mathematics \& Data Science Institute 
\\
Shandong University
\\
Jinan
\\
Shandong 250100
\\
China} 
\email{jyliu@sdu.edu.cn}

\address{%
Tingting Wen
\\
School of Mathematics
\\
Shandong University
\\
Jinan
\\
Shandong 250100
\\
China} 
\email{ttwen@mail.sdu.edu.cn}

\address{%
Jie Wu
\\
CNRS
\\
LAMA 8050
\\
Universit\'e Paris-Est Cr\'eteil
\\
94010 Cr\'eteil cedex
\\
France}
\email{jie.wu@math.cnrs.fr}

\date{\today}

\begin{abstract}
Let $\mathcal{S}_Q$ denote $x^3= Q(y_1, \ldots, y_m)z$
where $Q$ is a primitive positive definite quadratic form in $m$ variables with integer coefficients. 
This $\mathcal{S}_Q$ ranges over a class of singular cubic hypersurfaces as $Q$ varies.  
For $\mathcal{S}_Q$ we prove 
\begin{itemize}
\item[(i)]
Manin's conjecture is true if $Q$ is locally determined with $2\mid m$ and $m\geq 4$; 

\item[(ii)] 
in general Manin's conjecture is true up to a leading constant if $2\mid m$ and $m\geq 6$. 
\end{itemize}
\end{abstract}

\subjclass[2010]{11D45, 11N37}
\keywords{Manin's conjecture, Asymptotic formula}

\maketitle   	

\section{Introduction} 
\subsection{Manin's conjecture}  
Counting rational points on algebraic varieties is an important problem in arithmetic geometry
and has received lots of attention.
The far-reaching conjecture of Manin \cite{BatMan90} 
has been a driving force in this area, where ideas from analysis, algebra, as well as 
geometry deeply intertwined. The original conjecture of Manin was formulated for smooth Fano varieties. 
It predicts an asymptotic formula for the density of rational points 
up to some height on the variety in question, where in the main term the number of log-powers  
depends on the rank of its Picard group. 
This has been generalized to a large class of singular Fano varieties by 
Batyrev and Tschinkel  in \cite{BatTsc98b}. 

The aim of the paper is to study Manin's conjecture for 
\begin{equation}\label{def:SmQ}
\mathcal{S}_Q: \quad x^3=Q(y_1, \ldots, y_m) z 
\end{equation}
where $Q(y)=Q(y_1, \ldots, y_m)$ is a primitive positive definite quadratic form in $m\geq 3$ variables with integer coefficients. Here $Q$ is {\it primitive} means that there is no common divisor among
the coefficients of all terms. 
Geometrically speaking, $\mathcal{S}_Q$ ranges over a class of cubic hypersurfaces as $Q$ varies. 
This paper can be viewed as a continuation of our previous research \cite{LWZ19},  where Manin's conjecture is 
proved for $\mathcal{S}_Q$ with $Q$  being the diagonal form    
\begin{equation}\label{def:Sm}
Q(y) = y_1^2+\cdots+y_m^2 
\end{equation}
and $m$ is a multiple of $4$. 

It is well-known that for any $\mathcal{S}_Q$ with $m\geq 3$,   
the heuristic of the circle method does not apply, since there are too many 
solutions with $x=z=0$. One 
therefore counts such solutions of \eqref{def:SmQ} that neither $x$ nor 
$z$ vanishes. If a point in $\P^{m+1}$  is represented by $(x, y_1, \dots, y_m, z) \in \Z^{m+2} $ with
coprime coordinates, then 
\begin{equation}\label{def:Height}
H(x: y_1: \ldots : y_m : z) = \max\{|x|, \textstyle\sqrt{Q(y_1, \ldots, y_m)}, |z| \}^{m-1} 
\end{equation} 
is a natural anticanonical height function for rational points on $\mathcal{S}_Q$.  
Let $B$ be a big parameter, and $N_Q(B)$  denote the number of rational points on
$\mathcal{S}_Q$ satisfying 
\begin{equation}\label{def:Box}
H(x: y_1: \ldots : y_m : z)\leq B, \;
x\not=0, \;
z\not=0.
\end{equation}
We need two more parameters to state Manin's conjecture. The first is $\gamma$, 
the number of crepant divisors over $\Q$ of $\phi:  \widetilde{\mathcal{S}}_Q \to \mathcal{S}_Q$ that is a resolution of singularities. 
Note that $\gamma$ depends on $\mathcal{S}_Q$, but not on $\phi$. 
The second parameter is $r :=\mathrm{rank}_{\Q}  \big(\text{Pic}(\mathcal{S}_Q)\big)$ 
that is the Picard rank of $\mathcal{S}_Q$ over $\Q$. 
Note that  $\mathcal{S}_Q$ is singular for any  $m\geq 1$.  For $m \geq 3$, it is easy to see that it has one isolated singular point $[0: 0: \dots :0 :1]$ and 
a continuous  singular locus $\{ x=z=Q(y_1, \dots, y_m)=0 \}$. Resolving the latter singular locus produces two crepant divisors while none for the first
point. Thus the $\gamma=2$ for $\mathcal{S}_Q$ with $m\geq 3$.  For the details of calculations, see \cite{LWZ19}.  
In addition, it is the same as in \cite[Proposition~2.1]{LWZ19} that we have $r=1$. We remark that these assertions are proved in \cite{LWZ19} just for diagonal 
$Q$, but they easily carry over to the present general case of $Q$. Thus Manin's conjecture now takes the following form.

\medskip 

\noindent
{\bf Conjecture M.} (Manin's conjecture for $\mathcal{S}_Q$).  
{\it Let $\mathcal{S}_Q$ be as in \eqref{def:SmQ} with $Q$ being a positive definite quadratic form in $m\geq 3$ variables with integer coefficients. Let $H$ 
and $N_Q(B)$ be as above. Then, as $B\to\infty$,   
\begin{equation}\label{ManinC}
N_Q(B)\sim C_Q B(\log B)^2 
\end{equation}
where $C_Q$ is a positive constant depending on $Q$. }

\vskip 2mm

For a general statement, see also Yasuda \cite[Conjecture 5.6]{Ya}.

\medskip 

The purpose of this paper is to investigate the above conjecture for a wide class of $\mathcal{S}_Q$. Indeed we will 
establish Manin's conjecture for those $\mathcal{S}_Q$ where the quadratic forms $Q$ are locally determined, which is a new concept introduced in this paper and will be explained in \S1.2. 
For those forms $Q$ that are not locally determined, we will prove that Manin's conjecture 
is true at least up to a leading constant, namely 
\begin{equation}\label{ManinC}
N_Q(B)\asymp B(\log B)^2.  
\end{equation} 
This means that the order of magnitude predicted by Manin is correct. 

There has been a rich literature in the direction of research in this paper, see for example    
\cite{FraManTsc89, BatTsc98a, BatTsc98b, Fou98, Bre98, HBM99, BSD07, LWZ19, Zhai2021} 
and the references therein. Thus we will not give a detailed survey here. 
We also remark that, since our $\mathcal{S}_Q$ is singular,  
the powerful results of Davenport \cite{Dav} as well as Heath-Brown \cite{HeaBro} give, 
instead of an asymptotic formula for the density of rational points, just the existence of a non-trivial point on $\mathcal{S}_Q$,   
while requiring at least $m\geq 12$. 

\medskip 

Our estimates for $N_Q(B)$ will be derived from that for $N_Q^*(B)$, the number of integral solutions of 
\eqref{def:SmQ} satisfying 
\begin{equation}\label{def:Box*}
H^*(x, y_1, \ldots , y_m , z)\leq B, \;
x\not=0, \;
z\not=0.
\end{equation}
Here $H^*$ is another height function defined by 
\begin{equation}\label{def:Affineheight}
H^*(x, y_1, \ldots , y_m, z) = \max\{|x|, \textstyle\sqrt{Q(y_1, \ldots, y_m)}, |z| \}
\end{equation} 
for any point $(x, y_1, \ldots , y_m, z)\in {\Z}^{m+2}$. 
Of course the two height functions $H$ and $H^*$ are closely related, 
and $H^*$ is commonly used in the circle method. 

\subsection{Locally determined quadratic forms} 
We fix some notations before going further. 
Let $Q(y)=Q(y_1, \ldots, y_m)$ be a positive definite quadratic form in $m\geq 3$ variables with integer coefficients. 
In Siegel's notation, we write 
\begin{equation}\label{SieForm}
Q(y)
= {}^{t}y M y = \frac{1}{2}A[y]
=  \frac{1}{2}\sum_{1\le i\le m} a_{ii} y_i^2+ \sum_{1\le i<j\le m} a_{ij} y_iy_j,
\end{equation}
where 
$a_{ij}\in\Z$, 
$a_{ii}\in 2\Z$,  
$A=(a_{ij})$ is a symmetric positive definite matrix of rank $m$, and $M = \frac{1}{2}A$ is called the matrix of $Q$. 
The {\it discriminant} $D$ of $Q$ is defined as $D=(-1)^\frac{m}{2}|A|$ if $m$ 
is even, $\frac{1}{2}(-1)^\frac{m+1}{2}|A|$ if $m$ is odd. Write $\|Q\|:=\max |a_{ij}|$. 
These will be frequently used throughout the paper. 

Our analysis in the following sections will depend on the local behavior of solutions to the equation 
\begin{equation}\label{Q(y)=n}
Q(y)=n.
\end{equation} 
Let $p$ be a prime, and let $\delta_p(n, Q)$ be the local density of \eqref{Q(y)=n} at $p$, i.e. 
\begin{equation}\label{def:localdensity}
\delta_p(n, Q)
:= \lim_{\nu\to\infty} p^{-\nu (m-1)} \big|\big\{y\in(\Z/p^\nu\Z)^m \,:\, Q(y)\equiv n\,({\rm mod}\,p^{\nu})\big\}\big|.
\end{equation}
If $p\nmid 2D$ then $\delta_p(n, Q)$ can be written nicely and explicitly that will be given in Lemma~\ref{Iwaniec};  
it turns out that these primes are harmless. 
The bad primes are the divisors of $2D$, and the main difficulty and complexity in the paper 
are caused by these bad primes. 

Let $p\mid2D$, that is $p$ is a bad prime. For any integer $n$, we can write  
\begin{equation}\label{def:nupnnp}
n=p^{\nu_p(n)}n_p
\quad\text{with}\quad
p\nmid n_p, 
\end{equation}
where $\nu_p(n)$ is the $p$-adic order of $n$. 
A primitive positive definite quadratic form $Q$ is \textit{locally determined}  
if for any integer $n$ and any bad prime $p\mid2D$,  we have
\begin{equation}\label{loc/det}
\delta_p(n, Q) = \delta_p(p^{\nu_p(n)}, Q).
\end{equation}
That is the value of $\delta_p(n, Q)$ which is independent of $n_p$ in the notation \eqref{def:nupnnp}. 
We will see in Lemma~\ref{Iwaniec} that \eqref{loc/det} is naturally satisfied when $p\nmid 2D$. 

In \S3.2, we shall give a sufficient condition 
for locally determined quadratic forms, and also investigate some typical examples. 
\subsection{The results} 

The main results of this paper are as follows.

\begin{theorem}\label{thm1}
Let $m=2k\geq 4$ be an even integer, and let $Q$ be a primitive positive definite quadratic form
in $m$ variables with integer coefficients. If $Q$ is locally determined, then Manin's conjecture is true for $\mathcal{S}_Q$. 
More precisely, as $B\to\infty$,   
\begin{equation}\label{NQB}
\begin{cases}
N^*_Q(B)
 = \mathcal{C}^*_Q B^{m-1} P^*_Q(\log B)
+ O\big(\|Q\|^{\frac{m}{4}}B^{m-1-\frac{1}{6}+\varepsilon}\big),
\\\noalign{\vskip 1,5mm}
N_Q(B)
 = \mathcal{C}_Q B P_Q(\log B) 
+ O\big(\|Q\|^{\frac{m}{4}}B^{1-\frac{1}{6(m-1)}+\varepsilon}\big),
\end{cases}
\end{equation} 
where 
\begin{equation}
\label{Constants:calC*QCQ}
\mathcal{C}^*_Q 
:= \frac{2(2\pi)^\frac{m}{2}}{\Gamma(\frac{m}{2})\sqrt{|A|}}, 
\qquad
\mathcal{C}_Q := \frac{\mathcal{C}^*_Q}{(m-1)^2\zeta(m-1)}, 
\end{equation}
$P^*_Q(t)$ is the quadratic polynomial given by \eqref{Cor:SQxx2} in Proposition \ref{Asymp:SQxySWxy}
whose leading coefficient $\mathscr{C}_Q$ is defined as in \eqref{def:scrCQ},
and $P_Q(t)$ is determined by the formula \eqref{def:PQ(t)}.
The implied constants above depend on $m$ and $\varepsilon$ only. 
\end{theorem}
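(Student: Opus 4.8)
The plan is to reduce the count to the representation numbers $r_Q(n):=\#\{y\in\Z^m:Q(y)=n\}$ and then to Siegel's mass formula. Since $x\ne0$ forces $Q(y)\ne0$ and $z=x^3/Q(y)$, a solution counted by $N^*_Q(B)$ is just a pair $(x,y)$ with $n:=Q(y)\mid x^3$, i.e.\ $\kappa(n)\mid x$ where $\kappa(n):=\prod_{p^a\|n}p^{\lceil a/3\rceil}$. Writing $g(n):=\kappa(n)^3/n$, which is multiplicative with $g(p^a)=p^{(-a)\bmod 3}$, the height conditions $|x|\le B$ and $|z|\le B$ collapse to
\[
N^*_Q(B)=2\sum_{n\le B^2}r_Q(n)\Big\lfloor\big(B/g(n)\big)^{1/3}\Big\rfloor
=2\sum_{v\ \text{cube-free}}\Big\lfloor\tfrac{(Bv)^{1/3}}{\mathrm{rad}(v)}\Big\rfloor\!\!\sum_{u^3v\le B^2}\!\!r_Q(u^3v),
\]
the second form ($n=u^3v$ with $v$ cube-free) making visible that the dominant solutions are those with $Q(y)$ close to a perfect cube.

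\emph{Main term.} For $m=2k\ge4$ the theta series $\theta_Q$ has integral weight $k$, so $r_Q(n)=\mathcal M(n)+E(n)$ with $\mathcal M(n)=\tfrac{\mathcal C^*_Q}{2}n^{k-1}\mathfrak S(n,Q)$, $\mathfrak S(n,Q)=\prod_p\delta_p(n,Q)$, and $E(n)$ the $n$-th coefficient of a weight-$k$ cusp form of level $O(|D|)$, whence $E(n)\ll_\varepsilon\|Q\|^{O(1)}n^{m/4-1/2+\varepsilon}$ by Deligne. Feeding this into the identity and summing the $E$-part (group by the value of $g(n)$ and use $\lfloor(B/g(n))^{1/3}\rfloor\le(B/g(n))^{1/3}$) gives a contribution $\ll\|Q\|^{O(1)}B^{m/2+\varepsilon}$, which is absorbed in the claimed error for $m\ge4$. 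For the $\mathcal M$-part one invokes that $Q$ is \emph{locally determined}: this is precisely the hypothesis that makes $\delta_p(n,Q)$ depend on $n$ only through $\nu_p(n)$ at the bad primes $p\mid2D$, so that $\mathfrak S(\cdot,Q)$ is multiplicative and, with Lemma~\ref{Iwaniec} at the good primes, the Dirichlet series in sight acquire Euler products. Executing the $u$-summation against the Euler product (Perron, shifting the contour past the rightmost pole at $3k-2$) and then summing over cube-free $v$ produces the main term $\mathcal C^*_QB^{m-1}P^*_Q(\log B)$, with $P^*_Q$ quadratic: each prime $p$ may divide the cube-free part of $Q(y)$ with one of the \emph{two} exponents $1$ or $2$, and each possibility feeds a factor $1+O(1/p)$ into an Euler product of size $(\log B)^2$ — the shadow of the cube in $x^3$, exactly as in $\sum_{n\le X}d(n^3)\asymp X(\log X)^2$. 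This $P^*_Q$ is the polynomial of Proposition~\ref{Asymp:SQxySWxy}, with leading coefficient $\mathscr C_Q$ as in \eqref{def:scrCQ}.

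\emph{The main obstacle.} Writing $\lfloor\,\cdot\,\rfloor=(\,\cdot\,)-\{\,\cdot\,\}$, the fractional parts $\{(B/g(n))^{1/3}\}$ may \emph{not} be bounded by $1$ term by term: that crude step costs $B^{m-1}$, the very size of the answer. One must instead pull out of $\sum_v(\text{weight})\cdot\{(B/\tau(v))^{1/3}\}$ (here $\tau(v)=\mathrm{rad}(v)^3/v$) its genuine secondary main terms — which supply the lower coefficients of $P^*_Q$ — and then estimate the remaining oscillatory part. Expanding the sawtooth in Fourier series turns this into cube-root exponential sums $\sum_{c\sim C}(\text{weight})\,e(hB^{1/3}c^{-1/3})$, controlled by van der Corput / exponent-pair bounds; optimizing these against the summation ranges is what yields the error $O\big(\|Q\|^{m/4}B^{m-1-\frac16+\varepsilon}\big)$, the exponent $\tfrac16$ being the saving inherent to such phases. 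The delicacy of this step — compounded by the bad primes $p\mid2D$, where $\delta_p$ has no closed form and where it is exactly the locally-determined hypothesis that restores multiplicativity, and by the interaction of the two height bounds near their common boundary — is the technical heart of the proof.

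\emph{Passage to $N_Q(B)$.} The map $(x,y,z)\mapsto(dx,dy,dz)$ preserves $\mathcal S_Q$ and scales $H^*$ by $d$, so $N^*_Q(B)=\sum_{d\le B}N^{*,\mathrm{prim}}_Q(B/d)$; Möbius inversion (the tail $\sum_dd^{-(m-7/6)}$ converges for $m\ge4$) transfers the asymptotics, with the same quality of error, to the primitive count. Since $H=(H^*)^{m-1}$ and a projective point with $xz\ne0$ has exactly two coprime integral representatives, $N_Q(B)=\tfrac12N^{*,\mathrm{prim}}_Q(B^{1/(m-1)})$, which gives the second line of \eqref{NQB}: the constant $\mathcal C_Q=\mathcal C^*_Q/((m-1)^2\zeta(m-1))$ records the factor $(m-1)^{-2}$ coming from $(\log B^{1/(m-1)})^2$ together with the coprimality factor $\zeta(m-1)^{-1}$, the error is the image of $\|Q\|^{m/4}B^{m-1-\frac16+\varepsilon}$ under $B\mapsto B^{1/(m-1)}$, and $P_Q$ is the resulting polynomial \eqref{def:PQ(t)}.
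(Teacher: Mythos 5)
Your basic reductions coincide with the paper's: the identity $N^*_Q(B)=2\sum_{a\le B}\sum_{n\le B^2}\mathbb{1}_3(an)\,r(n,Q)$ (of which your floor-function version, with $n=u^3v$, $v$ cube-free, is an equivalent rewriting with the inner $a$-sum already executed), the split $r(n,Q)=\text{main}+\text{error}$, the role of local determinedness in restoring multiplicativity at bad primes, and the M\"obius step from $N^*_Q$ to $N_Q$.

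Where you diverge is in the analytic core, and the part you flag as ``the technical heart'' neither matches what the paper does nor is actually carried out. The paper never collapses the $a$-sum to a floor function. It keeps the double sum $S_Q(x,y)=\sum_{a\le x}\sum_{n\le y}\mathbb{1}_3(an)n^{k-1}\mathfrak S(n,Q)$, forms the two-variable Dirichlet series $\mathfrak F(s,w)$, which for locally determined $Q$ factors (Lemma~\ref{Lem:FQsw}) as $\prod_{0\le j\le3}\zeta\big((3-j)s+j(w-k+1)\big)\cdot\mathfrak G(s,w)$ with $\mathfrak G$ absolutely convergent and $\ll1$ once $\min_{0\le j\le3}\re\big((3-j)s+j(w-k+1)\big)\ge\tfrac12+\varepsilon$; it then studies the smoothed mean value $M_Q(X,Y)=\int_1^Y\!\int_1^X S_Q$ via a two-variable Perron formula, shifts contours, and recovers $S_Q$ from $M_Q$ through the difference operator $\mathscr D$ of Section~5, using second- and fourth-moment bounds for $\zeta$ on the shifted lines. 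No floor function ever appears, so there is nothing to Fourier-expand and no cube-root phase to estimate.

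Consequently your diagnosis of where the $\tfrac16$ comes from is wrong. It has nothing to do with van der Corput or exponent pairs: the saving is the square-root barrier of the Euler product. The $j=0$ constraint $3\sigma\ge\tfrac12+\varepsilon$ caps the $s$-contour at $\sigma_0=\tfrac16+\varepsilon$; coupled with the difference-operator balance ($H=X^{5/6}$, $J=X^{-1/6}Y$) this yields $S_Q(X,X^2)=X^{m-1}P_Q(\log X)+O(X^{m-\frac76+\varepsilon})$ and hence the theorem. Your plan --- pull the secondary main terms out of $\sum_v(\text{weight})\{(B/\tau(v))^{1/3}\}$, expand the sawtooth, and ``optimize'' van der Corput to win $B^{1/6}$ --- is stated, not proved. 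Identifying those secondary terms (which would have to reproduce the lower coefficients of $P^*_Q$), carrying the singular series $\mathfrak S(u^3v,Q)$ along, and getting a saving uniform in $\|Q\|$ is a substantial open task, almost certainly harder than the paper's route, since $r_Q$ does not travel nicely alongside the arithmetic function $g(n)$.

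Two smaller points. (i) The bound $E(n)\ll\|Q\|^{O(1)}n^{m/4-1/2+\varepsilon}$ from Deligne needs an explicit handle on how the implied constant depends on the level; the paper instead proves $r(n,Q)=\text{main}+O\big(m^{3m/4}\|Q\|^{m/4}n^{(m-1)/4+\varepsilon}\big)$ (Proposition~\ref{Prop/Q=n}, Lemma~\ref{lem3}) via a circle-method argument precisely to make this uniformity concrete. (ii) Your final paragraph is the paper's inversion \eqref{MobInv}--\eqref{def:PQ(t)}, and the $(m-1)^{-2}\zeta(m-1)^{-1}$ factor matches; the factor-of-two bookkeeping you flag is real but harmless to the asymptotics.
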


By Proposition \ref{pro:LocallyDetermined} below, the asymptotic formulae \eqref{NQB} hold
for all positive definite quadratic forms satisfying  
\eqref{Cond:LocallyDetermined-odd}--\eqref{Cond:LocallyDetermined-2},
in particular, for all examples discussed in \S3.3.
Here we only state two special cases: 
\begin{itemize}
\item[(a)]
$Q=y_1^2+\cdots+y_m^2$ with $4\mid m$; 
\vskip 1mm
\item[(b)]
quadratic forms $Q$ of level one whose meaning will be explained immediately.  
\end{itemize}
Define the level $\nu (Q)$ of 
$Q$ to be the smallest positive integer $\nu$ such that $\nu A^{-1}$ has integer entries and even diagonal entries. 
A quadratic form $Q$ is of {\it level one} if $\nu (Q)=1$. 
It is known that, for such a form $Q$, we must have $8\mid m$, $|A|=1$, and $A$ is equivalent to $A^{-1}$. 
The existence of of such forms was proved by Minkowski, 
and an example for $m=8$ can be found in Iwaniec \cite[p.~176]{Iwa97}.

The following corollary gives a significant improvement on our previous result \cite[Theorem~7.1]{LWZ19}, 
where only a logarithmic factor is saved in the error terms. For more information about the 
polynomials $P^*_m(t)$ and $P_m(t)$ below, see also \cite[Theorem~7.1]{LWZ19}. 

\begin{corollary}\label{cor:sum-squares}
If $4\mid m$ and $Q=y_1^2+\cdots+y_m^2$, then Manin's conjecture is true for $\mathcal{S}_Q$. 
More precisely, writing $N^*_m(B)$ and $N_m(B)$ in place of $N^*_Q(B)$ and $N_Q(B)$ 
in this special case, we have,  as $B\to\infty$,   
\begin{equation}\label{cor:N*mB-NmB}
\begin{cases}
\displaystyle
N^*_m(B)
= B^{m-1} P^*_m(\log B) + O\big(B^{m-1-\frac{1}{6}+\varepsilon}\big),
\\\noalign{\vskip 1,5mm}
\displaystyle
N_m(B)
= B P_m(\log B) + O\big(B^{1-\frac{1}{6(m-1)}+\varepsilon}\big), 
\end{cases}
\end{equation}
where 
$P^*_m(t)$ and $P_m(t)$ are two related quadratic polynomials with the leading coefficients
$\mathcal{C}^*_m$ given by \eqref{cor:sum-squares:constant} and $\mathcal{C}^*_m/(m-1)^2\zeta(m-1)$, respectively.
The implied constants depends on $m$ and $\varepsilon$ only.
\end{corollary}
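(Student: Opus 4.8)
The plan is to deduce Corollary~\ref{cor:sum-squares} directly from Theorem~\ref{thm1} applied to the particular form $Q_0:=y_1^2+\cdots+y_m^2$, so that the whole argument reduces to checking that $Q_0$ satisfies the hypotheses of Theorem~\ref{thm1} and then specialising the constants. Two of the hypotheses are immediate. Since $4\mid m$ we have $m=2k\ge4$ with $k$ even, and $Q_0$ is visibly positive definite; moreover, in Siegel's notation \eqref{SieForm} the matrix of $Q_0$ is $A=2I_m$, with vanishing off-diagonal entries and $a_{ii}=2$, so that $\|Q_0\|=2$ and the coefficients $\{a_{ij}\}$ have no common divisor --- hence $Q_0$ is primitive. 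The only real work is to show that $Q_0$ is \emph{locally determined} in the sense of \eqref{loc/det}.

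First I would identify the bad primes. From $A=2I_m$ we get $|A|=2^m$, and since $4\mid m$ the discriminant of $Q_0$ is $D=(-1)^{m/2}2^m=2^m$; thus the only prime dividing $2D$ is $p=2$, and it remains to prove
\[
\delta_2(n,Q_0)=\delta_2\big(2^{\nu_2(n)},Q_0\big)\qquad\text{for every }n\in\Z,
\]
i.e.\ that the $2$-adic density of $y_1^2+\cdots+y_m^2=n$ does not depend on the odd part $n_2$ appearing in \eqref{def:nupnnp}. At this point I would invoke Proposition~\ref{pro:LocallyDetermined}: over $\Z_2$ the form $Q_0$ is already diagonal and unimodular of rank $m$, and one verifies that the sufficient conditions \eqref{Cond:LocallyDetermined-odd}--\eqref{Cond:LocallyDetermined-2} of that proposition hold precisely because $m$ is a multiple of $4$. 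Concretely, in the explicit evaluation of $\big|\{y\in(\Z/2^\nu\Z)^m:\ y_1^2+\cdots+y_m^2\equiv n\}\big|$ the dependence on $n_2$ is confined to quadratic-character / Gauss-sum factors raised to an exponent divisible by $4$, hence equal to $1$; this is exactly the mechanism that makes $Q_0$ with $4\mid m$ appear as case~(a) among the examples of \S3.3. Therefore $Q_0$ is locally determined.

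With all the hypotheses verified, Theorem~\ref{thm1} delivers the asymptotic formulae \eqref{NQB} for $Q=Q_0$, and it only remains to put the constants and error terms into the stated shape. From \eqref{Constants:calC*QCQ} and $|A|=2^m$ one computes
\[
\mathcal{C}^*_{Q_0}=\frac{2(2\pi)^{m/2}}{\Gamma(m/2)\sqrt{2^m}}=\frac{2\pi^{m/2}}{\Gamma(m/2)},
\qquad
\mathcal{C}_{Q_0}=\frac{\mathcal{C}^*_{Q_0}}{(m-1)^2\zeta(m-1)};
\]
combining $\mathcal{C}^*_{Q_0}$ with the leading coefficient $\mathscr{C}_{Q_0}$ of $P^*_{Q_0}$ from \eqref{def:scrCQ} gives the constant $\mathcal{C}^*_m$ of \eqref{cor:sum-squares:constant}, and the polynomials $P^*_{Q_0}$ and $P_{Q_0}$ (the latter via \eqref{def:PQ(t)}) become the $P^*_m$ and $P_m$ of the corollary. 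Finally $\|Q_0\|=2$ is bounded in terms of $m$ alone, so the factor $\|Q_0\|^{m/4}=2^{m/4}$ in the error terms of \eqref{NQB} is absorbed into the implied constants, producing $O(B^{m-1-1/6+\varepsilon})$ and $O(B^{1-1/(6(m-1))+\varepsilon})$.

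The main obstacle is the local-determinacy check at $p=2$. For a general even form $\delta_2(n,Q)$ really does vary with the residue of $n_2$ modulo small powers of $2$, and the whole point of isolating ``locally determined'' forms is that these variations collapse; the content of the corollary is that $y_1^2+\cdots+y_m^2$ with $4\mid m$ lies in this class, with the divisibility $4\mid m$ being exactly what forces it. Should one wish to avoid quoting Proposition~\ref{pro:LocallyDetermined}, the self-contained route would be to run the $2$-adic local-density computation by hand --- diagonalise $Q_0$ over $\Z_2$, express $\delta_2(n,Q_0)$ as the limit of normalised solution counts as in \eqref{def:localdensity}, and check that the only $n_2$-dependent contributions are fourth powers of $\pm1$. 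Everything else --- passing from $N^*_Q(B)$ to $N_Q(B)$, the precise shape of $P^*_m$ and $P_m$, and the comparison with \cite[Theorem~7.1]{LWZ19} announced in the text --- is routine once Theorem~\ref{thm1} is available.
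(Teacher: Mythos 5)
Your reduction to Theorem~\ref{thm1} --- check the hypotheses, invoke Proposition~\ref{pro:LocallyDetermined} (equivalently, Example~1 of \S3.3) to get local determinacy, then specialise --- is the same skeleton the paper uses, and the bookkeeping ($A=2I_m$, $\|Q_0\|=2$, $|A|=2^m$, $D=2^m$, hence only $p=2$ is bad, and $\mathcal{C}^*_{Q_0}=2\pi^{m/2}/\Gamma(m/2)$) is all correct. But you have misplaced where the real work lies. The local-determinacy check for $y_1^2+\cdots+y_m^2$ with $4\mid m$ is already discharged in the paper's \S3.3 (Example~1); it is \emph{not} the content of the proof of Corollary~\ref{cor:sum-squares}.

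The genuine gap is the step you dispose of in one clause: ``combining $\mathcal{C}^*_{Q_0}$ with the leading coefficient $\mathscr{C}_{Q_0}$ of $P^*_{Q_0}$ from \eqref{def:scrCQ} gives the constant $\mathcal{C}^*_m$.'' The quantity $\mathcal{C}^*_m$ in the corollary is not a name one is free to assign; it is the leading constant from the earlier result \cite[Theorem~7.1]{LWZ19}, and equation \eqref{cor:sum-squares:constant}, $\mathcal{C}^*_Q\mathscr{C}_Q=\mathcal{C}^*_m$, is a nontrivial \emph{identity} to be proved, not a definition. (The surrounding text makes the intent explicit: the corollary is claimed as ``a significant improvement on our previous result \cite[Theorem~7.1]{LWZ19}.'') The entire \S8 of the paper is devoted to this verification. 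Concretely, one must take the local-determinacy formula \eqref{sum-square:delta2}, derive from it
$$
\delta_2(2^\nu,Q)=\begin{cases}1 & \nu=0,\\ a-b\cdot 2^{(1-2k)\nu} & \nu\ge 1,\end{cases}
$$
with $a,b$ as in \eqref{ab}, evaluate $\sum_{d\ge 0}2^{-d}\sum_{\nu=0}^{3d}\delta_2(2^\nu,Q)$ in closed form, and recognise $(1-\tfrac12)^4\sum_{d\ge 0}2^{-d}\sum_{\nu=0}^{3d}\delta_2(2^\nu,Q)=\mathscr{G}_2(1,2k-1)$; then use $\chi(p)=1$ for odd $p$ to match the odd Euler factors in \eqref{def:scrCQ} with $\mathscr{G}_p(1,2k-1)$, and finally collapse the product $\mathcal{C}^*_Q\mathscr{C}_Q$ to the constant of \cite[Theorem~7.1]{LWZ19}. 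None of this appears in your proposal: as written, the argument is circular (it asserts that the specialisation produces ``the constant $\mathcal{C}^*_m$'' without computing either side), and it therefore does not establish the one fact that Corollary~\ref{cor:sum-squares} actually adds beyond a mechanical application of Theorem~\ref{thm1}.
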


When $Q$ is a quadratic forms of level one, 
we write $N^*_E(B)$ and $N_E(B)$ in place of $N^*_Q(B)$ and $N_Q(B)$.

\vskip 1mm

\begin{corollary}\label{cor:level-one}
Manin's conjecture is true for $\mathcal{S}_Q$ with $Q$ being any positive definite quadratic form 
of level one in $m$ variables and integer coefficients.  
More precisely, as $B\to\infty$, 
\begin{equation}\label{cor:N*EB-NEB}
\begin{cases}
\displaystyle
N^*_E(B)
= B^{m-1} P^*_E(\log B) + O\big(B^{m-1-\frac{1}{6}+\varepsilon}\big),
\\\noalign{\vskip 1,5mm}
\displaystyle
N_E(B)
= B P_E(\log B) + O\big(B^{1-\frac{1}{6(m-1)}+\varepsilon}\big),
\end{cases}
\end{equation}
where $P^*_E(t)$ and $P_E(t)$ are quadratic polynomials with the leading coefficients  
$$
\mathcal{C}^*_E
:= \frac{(2\pi)^{\frac{m}{2}} \zeta(\frac{3m}{2}-2)}{(3m-4) \Gamma(\frac{m}{2}) \zeta(\frac{m}{2})}
\prod_{p} \bigg(1-\frac{1}{p}\bigg)^2 
\bigg(1+\frac{2}{p}+\frac{3}{p^\frac{m}{2}}+\frac{2}{p^{m-1}}+\frac{1}{p^m}\bigg),
$$
and $\mathcal{C}_E := \mathcal{C}^*_E/(m-1)^2\zeta(m-1)$. 
The implied constants depend on $m$ and $\varepsilon$ only. 
\end{corollary}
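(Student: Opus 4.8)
The plan is to derive Corollary~\ref{cor:level-one} from Theorem~\ref{thm1}, the point being to check that a level-one form is locally determined and then to make the leading constant explicit. First I would record that $\nu(Q)=1$ forces $8\mid m$ and $|A|=1$, hence $D=(-1)^{m/2}|A|=1$, so the only bad prime is $p=2$. Over $\Z_2$ the lattice attached to $Q$ is even and unimodular of rank $m$; there are exactly two isomorphism classes of such $\Z_2$-lattices, distinguished by their determinant in $\Q_2^\times/(\Q_2^\times)^{2}$, and comparing with the global determinant $1$ rules out the non-split class and forces $Q$ to be, over $\Z_2$, an orthogonal sum of $m/2$ hyperbolic planes. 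For such a $\Z_2$-lattice the number of $y\in(\Z/2^\nu\Z)^m$ with $Q(y)\equiv n\pmod{2^\nu}$ is computed directly and depends on $n$ only through $\nu_2(n)$; equivalently, $Q$ satisfies the sufficient condition of Proposition~\ref{pro:LocallyDetermined} (conditions \eqref{Cond:LocallyDetermined-odd}--\eqref{Cond:LocallyDetermined-2}). Thus $Q$ is locally determined, Theorem~\ref{thm1} applies, and since $\|Q\|$ is now a fixed constant the error terms there collapse to the $O\big(B^{m-1-1/6+\varepsilon}\big)$ and $O\big(B^{1-1/(6(m-1))+\varepsilon}\big)$ of \eqref{cor:N*EB-NEB}, with $P^*_E$ and $P_E$ quadratic.

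It then remains to evaluate the leading coefficient. The leading coefficient of the main term of $N^*_Q(B)$ is $\mathcal{C}^*_Q\mathscr{C}_Q$, with $\mathcal{C}^*_Q=2(2\pi)^{m/2}/(\Gamma(m/2)\sqrt{|A|})$ from \eqref{Constants:calC*QCQ} and $\mathscr{C}_Q$ the Euler product over all primes of the local factors attached to $x^3=Q(y)z$ built from the densities $\delta_p(\cdot,Q)$, as in \eqref{def:scrCQ}. I would set $|A|=1$, so $\mathcal{C}^*_Q=2(2\pi)^{m/2}/\Gamma(m/2)$; insert for odd $p$ the explicit good-prime value of $\delta_p(n,Q)$ from Lemma~\ref{Iwaniec}; and insert for $p=2$ the value coming from the hyperbolic decomposition found above. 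The crucial observation is that the $2$-adic factor emerges in exactly the same shape as the odd ones, so the product is uniform in $p$. Summing the relevant geometric-type series over the powers of each prime and pulling out the global zeta factors then produces the local factor $\big(1-1/p\big)^{2}\big(1+2/p+3/p^{m/2}+2/p^{m-1}+1/p^{m}\big)$ together with $\zeta(3m/2-2)/\zeta(m/2)$ and the rational constant $1/(3m-4)$; combining these with $\mathcal{C}^*_Q$ gives the stated value of $\mathcal{C}^*_E$. Finally $\mathcal{C}_E=\mathcal{C}^*_E/((m-1)^{2}\zeta(m-1))$ follows from the relation between $\mathcal{C}_Q$ and $\mathcal{C}^*_Q$ in \eqref{Constants:calC*QCQ} and the passage from $N^*_Q$ to $N_Q$ already performed in Theorem~\ref{thm1}.

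The main obstacle is the analysis at $p=2$. One must pin down the $\Z_2$-type of a positive definite even unimodular lattice of rank $m$ with $8\mid m$, compute the $2$-adic representation densities of an orthogonal sum of hyperbolic planes, verify that these depend on $n$ only via $\nu_2(n)$, and check that the resulting $2$-adic Euler factor is precisely $\big(1-\tfrac12\big)^{2}\big(1+1+3/2^{m/2}+2/2^{m-1}+1/2^{m}\big)$, so that the Euler product defining $\mathcal{C}^*_E$ genuinely runs over all primes with the same local factor. Everything else --- the odd-prime densities, the verification of \eqref{Cond:LocallyDetermined-odd}--\eqref{Cond:LocallyDetermined-2}, the application of Theorem~\ref{thm1}, and the summation of the Euler product --- is a routine specialization of the machinery already set up for Theorem~\ref{thm1}.
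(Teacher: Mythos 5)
Your proposal is correct and follows essentially the same route as the paper: recall that level one forces $8\mid m$, $|A|=1$, $D=1$; establish that over $\Z_2$ the lattice is an orthogonal sum of $m/2$ hyperbolic planes (so $R=T=0$ in Yang's normalization), which is exactly the content of the paper's Example~2 in \S3.3 and verifies \eqref{Cond:LocallyDetermined-2}; apply Theorem~\ref{thm1}; and then compute $\mathcal{C}^*_Q\mathscr{C}_Q$ by summing the $p=2$ series from \eqref{level-one:delta2} and observing that, after folding in the $2$-adic part of $L(\tfrac{3m}{2}-2,\chi^3)/L(\tfrac m2,\chi)$ (recall $\chi(2)=0$), the $2$-factor takes the same shape $\big(1-\tfrac1p\big)^2\big(1+\tfrac2p+\tfrac3{p^{m/2}}+\tfrac2{p^{m-1}}+\tfrac1{p^m}\big)$ as the odd ones. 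The only place you add to the paper is the justification of the $\Z_2$-equivalence via the classification of even unimodular $\Z_2$-lattices by determinant; the paper simply asserts it, citing Iwaniec.
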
 

Of course the error terms in the above two corollaries are not best possible, 
and we will not get into any further improvement here. 
Very recently Zhai \cite{Zhai2021} improved the results of \cite{BDLWZ2019, LWZ19} by obtaining better error terms, 
and also established the above Corollary \ref{cor:level-one} with a better exponent $\frac{1}{4}$ in place of $\frac{1}{6}$.

In order to treat the general $\mathcal{S}_Q$ where $Q$ is not locally determined, 
we need to introduce the following technical assumption on the determinant $|A|$ of $A$ that
\begin{equation}\label{assumption}
\nu_p(|A|)\leq m-4 \; \text{ at all odd primes and} \; \nu_2(|A|)\leq m+1,
\end{equation}

We have the following general result.

\begin{theorem}\label{thm2}
Let $m=2k\geq 6$ be an even integer, and let $Q$ be a primitive positive definite quadratic form
in $m$ variables with integer coefficients, such that 
\eqref{assumption} holds. Then we have, as $B\to\infty$, 
\begin{equation}\label{estimte:N^*QB}
\begin{cases}
\varpi^- \mathcal{C}'^*_Q W^* \leq N^*_Q(B)\leq \varpi^+ \mathcal{C}'^*_Q W^*, 
\\\noalign{\vskip 1,5mm}
\hskip 1,8mm
\varpi^- \mathcal{C}'_Q W \leq N_Q(B)\leq \varpi^+ \mathcal{C}'_Q W,  
\end{cases}
\end{equation}
where $\varpi^{\pm}$ are given by \eqref{def/pi+-} and
$$
\mathcal{C}'^*_Q 
:= \frac{2(2\pi)^\frac{m}{2}}{\Gamma(\frac{m}{2})\sqrt{|A|}L(\frac{m}{2},\chi)},
\qquad
\mathcal{C}'_Q 
:= \frac{\mathcal{C}'^*_Q}{(m-1)^2\zeta(m-1)}\cdot
$$  
Here $\chi(q)=\chi_{4D}(q)=\big(\frac{4D}{q}\big)$ is the Jacobi symbol, 
and $L(s,\chi)$ is the Dirichlet $L$-function attached to $\chi$. 
In addition, 
\begin{equation}\label{def:W*=}
\begin{cases}
W^* := B^{m-1} P^*_W(\log B) +O_{m,\varepsilon}(\|Q\|^{\frac{m}{4}}B^{m-1-\frac{1}{6}+\varepsilon}), 
\\\noalign{\vskip 1mm}
\hskip 1,6mm
W := B P_W(\log B) +O_{m,\varepsilon}(\|Q\|^{\frac{m}{4}}B^{1-\frac{1}{6(m-1)}+\varepsilon}), 
\end{cases}
\end{equation} 
where $P^*_W$ is the quadratic polynomial given by \eqref{Cor:SWxx2} in Proposition \ref{Asymp:SQxySWxy} 
with leading coefficient $\mathscr{C}_W$ defined as in \eqref{def:scrCW} below,
and $P_W(t)$ is defined similar to \eqref{def:PQ(t)}. 
\end{theorem}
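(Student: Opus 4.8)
The plan is to reduce the proof of Theorem~\ref{thm2} to the combination of a \emph{multiplicative decomposition} of $N_Q^*(B)$ into a ``main'' counting function $W^*$ and a bounded fluctuating factor, exactly as in the locally determined case but without the simplification \eqref{loc/det}. First I would rewrite $N_Q^*(B)$ by parametrising the solutions of $x^3 = Q(y)z$ with $x\neq 0$, $z\neq 0$ and $H^* \le B$: writing $x^3 = Q(y)z$, for each admissible $(x,y)$ the variable $z$ is determined up to the condition $Q(y)\mid x^3$, so
\begin{equation}\label{plan:decomp}
N_Q^*(B) = \sum_{\substack{x\neq 0,\ y \\ Q(y)\mid x^3}} \mathbf{1}\!\left[\max\{|x|,\sqrt{Q(y)}, |x^3/Q(y)|\} \le B\right].
\end{equation}
Grouping by $n = Q(y)$ and by the representation count $r_Q(n) = \#\{y : Q(y) = n\}$, one is led to a Dirichlet-series / Dirichlet-hyperbola analysis whose local factors at each prime involve the local densities $\delta_p(n,Q)$. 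For $p\nmid 2D$ these are governed by Lemma~\ref{Iwaniec} and contribute the ``smooth'' Euler factors appearing in $\mathcal{C}'^*_Q$; for the bad primes $p\mid 2D$, the point is that the $p$-part of the arithmetic factor is no longer a single number $\delta_p(p^{\nu_p(n)},Q)$ but varies with $n_p$ in a bounded range.

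The key step is therefore to trap this variation. For each bad prime $p\mid 2D$, define
$$
\varpi_p^- := \inf_{p\nmid a}\frac{\delta_p(a,Q)}{\delta_p(1,Q)}, \qquad \varpi_p^+ := \sup_{p\nmid a}\frac{\delta_p(a,Q)}{\delta_p(1,Q)},
$$
and set $\varpi^{\pm} := \prod_{p\mid 2D}\varpi_p^{\pm}$, which is the definition referenced by \eqref{def/pi+-}. Assumption \eqref{assumption} is exactly what guarantees that these infima and suprema are finite and positive: it bounds the $p$-adic valuation of $|A|$, hence controls how degenerate $Q$ can be over $\Z_p$, so that the local densities $\delta_p(n,Q)$ stay within a fixed multiplicative window of $\delta_p(p^{\nu_p(n)},Q)$ uniformly in $n_p$. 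Replacing, for every $n$ and every bad $p$, the true local factor by its extremal bounds $\varpi_p^{\pm}\,\delta_p(p^{\nu_p(n)},Q)$ turns the arithmetic into the locally-determined shape, whose global sum is precisely $\mathcal{C}'^*_Q W^*$ with $W^*$ as in \eqref{def:W*=}: the value of $\delta_p(p^{\nu_p(n)},Q)$ depends only on $\nu_p(n)$, so the resulting sum factors through $L(\tfrac m2,\chi)$ with $\chi = \chi_{4D}$ and one obtains $P_W^*$ from Proposition~\ref{Asymp:SQxySWxy} with leading coefficient $\mathscr{C}_W$ from \eqref{def:scrCW}. This gives the two-sided bound
$$
\varpi^- \mathcal{C}'^*_Q W^* \le N_Q^*(B) \le \varpi^+ \mathcal{C}'^*_Q W^*.
$$

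Next I would run the analogous argument for $N_Q(B)$ in place of $N_Q^*(B)$; this is a purely formal step once $N_Q^*(B)$ is understood, since $H$ and $H^*$ differ only by the exponent $m-1$ in \eqref{def:Height}, so passing from the affine count to the projective count is the same coprimality sieve (removing the common factor of the coordinates, which introduces the $1/((m-1)^2\zeta(m-1))$ in $\mathcal{C}'_Q$) already carried out in \cite{LWZ19}; the error term is correspondingly rescaled from $B^{m-1-1/6+\varepsilon}$ to $B^{1-1/(6(m-1))+\varepsilon}$, and the polynomial $P_W$ is extracted from $P_W^*$ exactly as $P_Q$ is extracted from $P_Q^*$ via \eqref{def:PQ(t)}. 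The uniformity in $\|Q\|$ is tracked throughout as in Theorem~\ref{thm1}: the count of $y$ with $Q(y) \le T$ is $\ll \|Q\|^{m/4} T^{m/2}$ by the positive-definiteness of $Q$, which feeds the $\|Q\|^{m/4}$ factor into the error term.

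The main obstacle is the second paragraph: establishing that $\varpi_p^{\pm}$ are \emph{finite and bounded away from zero and infinity} under hypothesis \eqref{assumption}. This requires a careful $p$-adic analysis of $\delta_p(n,Q)$ — diagonalising $Q$ over $\Z_p$ into blocks $p^{e_1}u_1 y_1^2 + \cdots$ and evaluating the local density via Gauss sums — to show that the dependence on $n_p$ (as opposed to $\nu_p(n)$) contributes only a bounded multiplicative oscillation, with the bound depending only on $m$ (not on $p$ or $Q$, given the valuation constraint). The two regimes $p$ odd (where $\nu_p(|A|)\le m-4$ is needed) and $p=2$ (where the looser bound $\nu_2(|A|)\le m+1$ suffices because the dyadic local densities are more delicate) will have to be handled separately, and I expect the $p=2$ case to be the most technical point of the whole argument. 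Everything else — the Dirichlet-series manipulation, the contour shift or Perron-type extraction of the degree-two polynomial $P_W^*$, and the coprimality sieve — is a routine (if lengthy) replay of the machinery developed for Theorem~\ref{thm1}.
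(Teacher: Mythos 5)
Your high-level strategy --- bound the bad-prime local densities from both sides and reduce to a locally-determined-type sum --- is the one the paper uses, but your specific mechanism does not produce the stated inequalities. You propose to trap $\delta_p(n,Q)$ between $\varpi_p^{\pm}\,\delta_p(p^{\nu_p(n)},Q)$, with $\varpi_p^{\pm}$ the extremal ratios to $\delta_p(1,Q)$, and you then assert that the comparison sum ``is precisely $\mathcal{C}'^*_Q W^*$.'' It is not: $W^*$ in \eqref{def:W*=} is the asymptotic value of $S_W(B,B^2)$, and $S_W$ in \eqref{def:SWxy} involves only $\sigma_{1-k}(n,\chi)$, i.e.\ it carries \emph{no} bad-prime local factors. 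The reason is that $\chi=\chi_{4D}$ vanishes on every $p\mid 2D$, so the good-prime Euler product collapses exactly to $\prod_{p\nmid 2D}\delta_p(n,Q)=\sigma_{1-k}(n,\chi)/L(\tfrac m2,\chi)$, which is precisely \eqref{SinSer=:2}. Consequently the entire bad-prime part $\varpi(n,Q)=\prod_{p\mid 2D}\delta_p(n,Q)$ of \eqref{def:vpi} must be replaced by \emph{constants} $\varpi^{\pm}$, not by $\varpi_p^{\pm}\delta_p(p^{\nu_p(n)},Q)$; retaining the $\delta_p(p^{\nu_p(n)},Q)$ factors would yield an $S_Q$-type sum as in Theorem~\ref{thm1}, with a different polynomial and constant, not the $W^*$ of the statement. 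Also note that the $\varpi^{\pm}$ in the statement, given by \eqref{def/pi+-}, are the explicit constants $(1\pm\tfrac{49}{50})\prod_{p\mid D,\,p\ge 3}(1\pm\tfrac1p)$, not infima/suprema of ratios to $\delta_p(1,Q)$; your candidates are not the ones you are asked to prove the inequalities for.

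You also identify the finiteness and positivity of $\varpi_p^{\pm}$ as the main obstacle, but this is already fully dispatched by Proposition~\ref{SinSerEv}, whose proof uses Yang's explicit formulae (Lemmas~\ref{Yang3.1} and \ref{Yang4.1}) and the valuation hypothesis \eqref{assumption} to prove the concrete bounds $|\delta_p(n,Q)-1|\le 1/p$ for odd $p\mid D$ and $|\delta_2(n,Q)-1|\le\tfrac{49}{50}$; this is exactly where the thresholds $\nu_p(|A|)\le m-4$ and $\nu_2(|A|)\le m+1$, $m\ge 6$, enter. Given that proposition, \eqref{SinSer=:2}, and \eqref{Cor:SWxx2}, the proof of Theorem~\ref{thm2} is short: insert Proposition~\ref{Prop/Q=n} into \eqref{def:NdaggerQ(B)}, pull out the $L(\tfrac m2,\chi)$ via \eqref{SinSer=:2}, bound $\varpi(n,Q)$ by $\varpi^{\pm}$ uniformly in $n$, recognize the remaining weighted sum as $S_W(B,B^2)$, and apply Proposition~\ref{Asymp:SQxySWxy} together with the error control \eqref{UB:error}; the projective count then follows by M\"obius inversion \eqref{MobInv}, as you say. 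After the corrections above your blueprint aligns with the paper's argument, but as written the core replacement step and your $\varpi^{\pm}$ definitions do not yield the statement.
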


We remark that the case of odd $m$ can be treated in the same way, but the technical details are different. 
We leave this to another occasion, in order to minimize the size of the present paper.

\section{The equation $Q(y)=n$}

\subsection{The number of solutions to $Q(y)=n$} 
Proposition~\ref{Prop/Q=n} below is essentially due to Iwaniec \cite[Theorem~11.2]{Iwa97}. 
A difference is that here the implied $O$-constant does not depend on $Q$.   
By applying \cite[Theorem 20.9]{IwaniecKowalski}, we make explicit 
the dependence on $Q$ in the error term.

\begin{proposition}\label{Prop/Q=n}
Let $Q$ be a primitive positive definite quadratic form in $m=2k\geq 4$ variables with integer coefficients, 
and write $Q$ in the form of \eqref{SieForm}.  
Then the number $r(n, Q)$ of integral solutions to $Q(y)=n$ satisfies 
\begin{equation}\label{formula:r(n,Q)}
r(n, Q)
= \frac{(2\pi)^{\frac{m}{2}}n^{\frac{m}{2}-1}}{\Gamma(\frac{m}{2})\sqrt{|A|}}\mathfrak{S}(n,Q)
+ O\big(m^{\frac{3m}{4}}\|Q\|^{\frac{m}{4}}n^{\frac{m-1}{4}+\varepsilon}\big),
\end{equation}
where $\mathfrak{S}(n, Q)$ is the singular series associated to the equation $Q(y)=n$ (see \eqref{def:S(n,Q)} below),  
and the implied constant depends on $\varepsilon$ only. 
\end{proposition}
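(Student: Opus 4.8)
The plan is to derive the asymptotic formula for $r(n,Q)$ from the classical Hardy–Littlewood circle method, following the treatment of Iwaniec \cite[Theorem~11.2]{Iwa97} for the main term but inserting the quantitative bounds of Iwaniec–Kowalski \cite[Theorem~20.9]{IwaniecKowalski} to make the error term explicit in $\|Q\|$. First I would set up the exponential sum $S(\alpha) = \sum_{|y_i| \le \sqrt{n/\lambda}} e(\alpha Q(y))$ over a suitable box (with $\lambda$ the least eigenvalue of $A$, so that $Q(y)=n$ forces $y$ into such a box), and write $r(n,Q) = \int_0^1 S(\alpha) e(-\alpha n)\,\d\alpha$. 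I then dissect $[0,1]$ into major arcs $\mathfrak{M}$ around rationals $a/q$ with $q \le P$ and minor arcs $\mathfrak{m}$, with the parameter $P$ to be chosen as a small power of $n$.

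Next I would handle the two pieces separately. On the major arcs, a Gauss-sum evaluation of $S(a/q + \beta)$ — diagonalizing $Q$ over $\Z_p$ or using Siegel's theory — produces the familiar product of a singular integral (giving the Gamma factor and the power $n^{m/2-1}$, with the $\sqrt{|A|}$ coming from the volume normalization $\int_{\R^m} e(\beta(Q(y)-n))\,\d y$) and the truncated singular series $\mathfrak{S}(n,Q;P) = \sum_{q \le P} A_q(n)$, where $A_q(n)$ is the normalized complete exponential sum. The tail $\mathfrak{S}(n,Q) - \mathfrak{S}(n,Q;P)$ is bounded by estimating $|A_q(n)| \ll q^{-m/2+1+\varepsilon}$ uniformly (for $m \ge 4$ this tail converges and contributes an acceptable error once $P$ is a suitable power of $n$). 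The key point for the $Q$-dependence is that the Gauss sums and the comparison of the discrete box-sum with the integral each cost a factor controlled by $\|Q\|$; tracking these through is where the $\|Q\|^{m/4}$ and $m^{3m/4}$ factors enter — this is precisely the refinement over Iwaniec, and \cite[Theorem~20.9]{IwaniecKowalski} supplies the uniform Weyl-type bound for $S(\alpha)$ on the minor arcs with explicit dependence on the coefficients of $Q$.

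On the minor arcs I would apply the standard Weyl-differencing bound for $S(\alpha)$ (a diagonal quadratic form after a unimodular or near-unimodular change of variables, or directly via \cite[Theorem~20.9]{IwaniecKowalski}): for $\alpha$ not close to a rational with small denominator, $|S(\alpha)| \ll (n/\lambda)^{m/2} (q^{-1} + (n/\lambda)^{-m/2}q)^{1/2}$-type savings, so that $\int_{\mathfrak m} |S(\alpha)|\,\d\alpha$ is bounded by roughly $n^{(m-1)/4 + \varepsilon}$ after optimizing $P$, again with the coefficient dependence packaged into the $\|Q\|^{m/4}$ factor. Collecting the major-arc main term, the major-arc tail, and the minor-arc contribution yields exactly \eqref{formula:r(n,Q)}.

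The main obstacle, and the real content of the proposition over simply citing Iwaniec, is bookkeeping the dependence on $Q$ uniformly: one must ensure that every implied constant depends only on $\varepsilon$ (and $m$), never on $\|Q\|$ beyond the displayed power. This requires care at three points — the passage from the discrete sum over the box to the singular integral (the box size itself depends on the eigenvalues of $A$, hence on $\|Q\|$), the evaluation and bounding of the Gauss sums $A_q(n)$ with constants independent of $Q$, and the minor-arc Weyl bound where the rational approximation to $\alpha Q$ interacts with the size of the coefficients. Once these uniformities are secured via \cite[Theorem~20.9]{IwaniecKowalski}, the rest is the routine circle-method assembly, and the choice $P \asymp n^{1/4}$ (up to $\varepsilon$) balances the major-arc tail against the minor-arc bound to give the stated exponent $\frac{m-1}{4}+\varepsilon$.
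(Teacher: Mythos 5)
There is a genuine gap: the classical major/minor-arc dissection with Weyl differencing on the minor arcs cannot yield the stated error term $O\big(n^{\frac{m-1}{4}+\varepsilon}\big)$ when $m=4$, yet $m=4$ is squarely in the range of the proposition (and is used in Corollary~\ref{cor:sum-squares}). Two things break down. First, the singular-series tail: the standard estimate $|A_q(n)|\ll q^{1-\frac{m}{2}+\varepsilon}$ gives $\sum_{q>P}|A_q(n)|\ll P^{2-\frac{m}{2}+\varepsilon}$, which for $m=4$ is $\sum_{q>P}q^{-1+\varepsilon}$ and does not even converge absolutely — the singular series is only conditionally convergent at $m=4$, which is precisely why Kloosterman introduced his refinement. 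Second, the minor-arc integral: with $S(\alpha)$ a quadratic exponential sum in a box of side $\asymp\sqrt{n}$, Weyl differencing (or the $L^2$ bound $\int_0^1|S|^2\ll n^{\frac{m}{2}+\varepsilon}$ plus Cauchy–Schwarz) produces a minor-arc contribution that, for $m=4$, is of size $\gg n^{1-o(1)}$, comparable to the main term $\asymp n^{\frac{m}{2}-1}=n$. No choice of $P$ balances these to $n^{\frac{m-1}{4}}=n^{3/4}$. Your plan only begins to close for $m\ge 5$; the proposition as stated needs $m\ge 4$.

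The paper's proof is genuinely different. It follows Iwaniec's \cite[Theorem~11.2]{Iwa97}, which uses the Farey dissection (``Kloosterman's circle method'') rather than major/minor arcs. One writes $r(n,Q)=2\,\re\sum_{c\le C}\int_0^{1/(cC)}T(c,n;x)e(-nz)\,\d x$ and transforms each arc by Poisson summation (the theta transformation), producing the dual sum over $u\in\Z^m$ weighted by $e(-\widetilde{Q}(u)/(c^2z))$ and incomplete exponential sums $T_u(c,n;x)$ of Kloosterman/Sali\'e type. The crucial extra cancellation — unavailable to the major/minor-arc method at $m=4$ — comes from summing these Kloosterman-type sums nontrivially over $d\,({\rm mod}\,c)$, which is the content of \cite[Lemma~20.12]{IwaniecKowalski}. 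The paper's actual new work on top of Iwaniec is elementary but essential: Lemma~\ref{lem3} establishes the explicit Gauss-sum bound $|G_u(d/c)|\le(cm\|Q\|)^{\frac{m}{2}}$ (via counting solutions of $Az\equiv 0\,({\rm mod}\,c)$), and this replaces the cruder input in \cite[Lemma~20.12]{IwaniecKowalski}, allowing one to track the $m^{\frac{3m}{4}}\|Q\|^{\frac{m}{4}}$ dependence through Iwaniec's argument (together with the eigenvalue bound $\lambda_m\le m\|Q\|$ to control the sum over $u$ via $\widetilde Q(u)\ge |u|^2/(2m\|Q\|)$). Your instinct to ``make the error term explicit in $\|Q\|$ via \cite[Theorem~20.9]{IwaniecKowalski}'' points in the right direction, but to get down to $m=4$ you must abandon Weyl differencing and the Hardy–Littlewood arc structure in favor of the Farey/Poisson/Kloosterman route.
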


\begin{proof}
We mainly follow the proof of \cite[Theorem~11.2]{Iwa97} and keep its notations for convenience as far as possible. 
Let $\widetilde{Q}$ be the adjoint quadratic form defined by $\widetilde{Q}(u) := \f12 A^{-1}[u]$.
We have
\begin{equation}\label{rnQ}
r(n, Q) = 2\,\re\bigg(\sum_{c\leq C}\int_{0}^{1/(cC)}T(c,n;x) {\rm e}(-nz)\d x\bigg),
\end{equation}
where $z=x+{\rm i}y$ with any $y>0$ to be chosen later, 
\begin{equation}\label{T}
T(c, n; x)
:= \frac{({\rm i}/z)^\frac{m}{2}}{\sqrt{|A|}c^m} \sum_{u\in\Z^m}T_u(c, n; x), 
{\rm e}\Big(-\frac{\widetilde{Q}(u)}{c^2z}\Big), 
\end{equation}
and
$$
T_u(c, n; x)
:= \sum_{\substack{C<d\leq c+C\\cdx<1, \, (c,d)=1}} {\rm e}\Big(n\frac{\bar{d}}{c}\Big) G_u\Big(-\frac{d}{c}\Big)
$$
with the Gauss sum $G_u$ defined by 
\begin{equation*}
G_u\Big(\frac{d}{c}\Big) := \sum_{h\,({\rm mod}\;c)} {\rm e}\Big(\frac{d}{c}(Q(h)+{}^th u)\Big).
\end{equation*}
We are going to need the bound 
\begin{equation}\label{GauBd}
\Big|G_u\Big(\frac{d}{c}\Big)\Big|\le (c m \|Q\|)^\frac{m}{2} 
\end{equation}
which will be established in Lemma~\ref{lem3} below. 
We divide the complete sum $T_u(c, n; x)$ into two sums to moduli $c_0, c_1$ with $c_0c_1=c$, $(c_1, 2|A|)=1$ and $c_0$ having all prime factors in $2|A|$. 
Replacing \cite[Lemma 20.12]{IwaniecKowalski} with our \eqref{GauBd}, we have
\begin{equation}\label{lem5}
T_u(c, n; x)
\ll (m\|Q\|)^{\frac{m}{2}}(n+\widetilde{Q}(u),c_1)^{\frac{1}{2}}c_0^{\frac{1}{2}}c^{\frac{m+1}{2}}\tau(c)\log 2c,
\end{equation}
where $\widetilde{Q}(u)=\frac{1}{2}A^{-1}[u]$ as before and the implied constant does not depend on $Q$.

Observing that $T_u(c, n; x)$ is constant for $0<x<\frac{1}{c(c+C)}$, 
we apply the bound \eqref{lem5} to all terms in \eqref{T} except for $m=0$ in the range $0<x<\frac{1}{c(c+C)}$ in which $T_0(c,n;x)$ is equal to
\begin{equation*}\label{Tcn}
T(c, n)
= \sum_{\substack{d\,({\rm mod}\,c)\\ (d, c)=1}} {\rm e}\Big(n\frac{\bar{d}}{c}\Big) 
\sum_{h\,({\rm mod}\;c)} {\rm e}\Big(-\frac{d}{c}Q(h)\Big).
\end{equation*}
Then we obtain that
\begin{align*}
& T(c,n;x)
\\
& = \frac{T(c, n)}{\sqrt{|A|}\,c^m} \Big(\frac{{\rm i}}{z}\Big)^{\frac{m}{2}}\!
+ O\bigg(\frac{(m\|Q\|)^{\frac{m}{2}}(c_0c)^{\frac{1}{2}}\tau(c)\log 2c}{\sqrt{|A|}(c|z|)^{\frac{m}{2}}}
\sideset{}{'}\sum_{u\in\Z^m}(n+\widetilde{Q}(u),c_1)^{\frac{1}{2}}\exp\Big(\!\!-\!\frac{2\pi y\widetilde{Q}(u)}{c^2|z|^2}\Big)\bigg),
\end{align*}
where $\sum'$ means that $m=0$ is excluded from the summation if $0<x<\frac{1}{c(c+C)}$. 
We have
$$
\sideset{}{'}\sum\leq \Big(\sum_{\ell\geq 0} \frac{(n\nu+\ell, c_1)^{\frac{1}{2}}}{(1+\ell)^2}\Big)
\sideset{}{'}\sum_{u\in\Z^m}(1+\nu\widetilde{Q}(u))^2\exp\Big(-\frac{2\pi y\widetilde{Q}(u)}{c^2|z|^2}\Big),
$$
where $\nu=\nu(Q)$ is the level of $Q$ and $(c_1,2\nu)=1$.

Suppose $0<\lambda_1\leq\lambda_2\leq\cdots\leq\lambda_m=:\lambda$ are the eigenvalues of $Q$. 
Then 
$$
\sum_{1\leq i \leq m}\lambda_i
= \tr(A) 
= \sum_{1\leq i\leq m}a_{ii}\leq m\|Q\|.
$$ 
It follows that $\lambda_m\leq m\|Q\|$, and consequently 
$$
\widetilde{Q}(u)\geq \frac{|u|^2}{2\lambda} \ge \frac{|u|^2}{2m\|Q\|}, 
$$
where $|u|$ is the $l^1$-norm of the vector $u$.  
Hence for any $0<x<1/(cC)$, taking $C=n^{\frac{1}{2}}, y=C^{-2}=n^{-1}$, we obtain trivially
$$
\sideset{}{'}\sum_{u\in\Z^m}(1+\nu\widetilde{Q}(u))^2\exp\Big(\!\!-\frac{2\pi y\widetilde{Q}(u)}{c^2|z|^2}\Big) 
\ll (c|z|m^{\frac{1}{2}}\|Q\|^{\frac{1}{2}}y^{-\frac{1}{2}})^\kappa$$
for any $\kappa>0$. Taking $\kappa=\frac{m}{2}$, we get 
\begin{align*}
T(c,n;x)
= \frac{T(c, n)}{\sqrt{|A|}\,c^m}\Big(\frac{{\rm i}}{z}\Big)^{\frac{m}{2}}
+ O\Big(\frac{(m\|Q\|)^{\frac{3m}{4}} \xi(c_1)(c_0c)^{\frac{1}{2}}\tau(c)\log 2c}{\sqrt{|A|}}n^{\frac{m}{4}}\Big)
\end{align*}
where 
$$
\xi(c_1) 
:= \sum_{\ell\geq 0}(n\nu+\ell,c_1)^{\frac{1}{2}}(1+\ell)^{-2}\ll 1.
$$
Inserting this into \eqref{rnQ}, we obtain
\begin{align*}
r(n, Q)
& = \frac{1}{\sqrt{|A|}}\sum_{c\leq C} \frac{T(c, n)}{c^m} 
\int_{-1/(cC)}^{1/(cC)} \Big(\frac{{\rm i}}{z}\Big)^{\frac{m}{2}} {\rm e}(-nz)\d x
\\
& \quad
+ O\bigg(\frac{(m\|Q\|)^{\frac{3m}{4}}}{\sqrt{|A|}}n^{\frac{m}{4}}\sum_{c\leq C}(c_0c)^{\frac{1}{2}}\tau(c)(\log 2c)\int_{-1/(cC)}^{1/(cC)} {\rm e}(-nz)\d x\bigg).
\end{align*}
And the error term is bounded by
$$
\frac{(m\|Q\|)^{\frac{3m}{4}}}{\sqrt{|A|}} n^{\frac{m}{4}}
\sum_{c\leq C} \frac{(c_0c)^{\frac{1}{2}}\tau(c)\log 2c}{cC}
\ll \frac{m^{\frac{3m}{4}}\|Q\|^{\frac{m}{4}}n^{\frac{m}{4}}}{C^{1-\varepsilon}}\sum_{c\leq C} \frac{1}{\sqrt{c_1}}
\ll m^{\frac{3m}{4}}\|Q\|^{\frac{m}{4}}n^{\frac{m-1}{4}+\varepsilon}.
$$
Finally, the error terms from the main term contribute no more than the above. These give the error term in 
Proposition \ref{Prop/Q=n}. 
\end{proof} 

\begin{lemma}\label{lem3} 
Let $Q$ be a primitive positive definite quadratic form in $m=2k\geq 4$ variables with integer coefficients, 
and write $Q$ in the form of \eqref{SieForm}. For $(c, d)=1$ and $u\in \Z^m$, define   
\begin{equation*}
G_u\Big(\frac{d}{c}\Big) := \sum_{h\,({\rm mod}\;c)} {\rm e}\Big(\frac{d}{c}(Q(h)+{}^th u)\Big).
\end{equation*}
Then we have
\begin{equation*}
\Big|G_u\Big(\frac{d}{c}\Big)\Big|\le (c m \|Q\|)^\frac{m}{2}.
\end{equation*}
\end{lemma}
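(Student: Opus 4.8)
The plan is to bound the Gauss sum $G_u(d/c)$ by a multiplicative reduction to prime powers, followed by an elementary estimate at each prime power. First I would establish multiplicativity: writing $c = c_1 c_2$ with $(c_1, c_2) = 1$, one splits the residue $h \pmod c$ via the Chinese Remainder Theorem as $h \equiv c_2 \bar{c_2} h_1 + c_1 \bar{c_1} h_2$, and the quadratic-plus-linear exponent splits accordingly so that
\begin{equation*}
G_u\Big(\frac{d}{c}\Big) = G_{c_2 u}\Big(\frac{d \overline{c_2}^{\,2}}{c_1}\Big) \, G_{c_1 u}\Big(\frac{d \overline{c_1}^{\,2}}{c_2}\Big),
\end{equation*}
up to a relabeling of the linear vectors; in particular $|G_u(d/c)| = \prod_{p^\nu \| c} |G_{*}(*/p^\nu)|$ for suitable induced vectors at each prime power $p^\nu$. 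Since $(cm\|Q\|)^{m/2}$ is itself multiplicative in $c$ (as a function of $c$, with the $m\|Q\|$ factor a harmless constant to be distributed), it suffices to prove $|G_{u'}(d'/p^\nu)| \le (p^\nu m \|Q\|)^{m/2}$ for each prime power.

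For the prime-power estimate I would diagonalize. Over $\Z_p$ the symmetric matrix $A$ can be put, by an integral (or $p$-adically unimodular) change of variables, into a form adapted to $p$: for odd $p$ one gets a diagonal form $\sum a_i h_i^2$, and for $p = 2$ a sum of $1$- and $2$-dimensional blocks of the standard shape. The change of variables is unimodular mod $p^\nu$, hence only permutes the residues $h \pmod{p^\nu}$ and absorbs $u$ into a new linear vector, so $|G_u(d/p^\nu)|$ is unchanged. For a diagonal form the Gauss sum factors as a product of $m$ one-variable sums $\sum_{h \pmod{p^\nu}} {\rm e}\big(\frac{d}{p^\nu}(a_i h^2 + b_i h)\big)$; each such one-variable quadratic Gauss sum, by completing the square when $p \nmid d a_i$ and by the standard stratification when $p \mid a_i$, is bounded by $(p^\nu \cdot p^{\nu_p(2a_i)})^{1/2} \le (p^\nu |a_i|\cdot 2)^{1/2}$ in absolute value (the familiar fact that a quadratic Gauss sum mod $p^\nu$ has modulus at most $\sqrt{p^\nu}$ times the square root of the "bad part"). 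Multiplying the $m$ factors and using $\prod_i |a_i|$-type bounds against $\|Q\| \ge \max|a_i|$ (again tracking that we have $m$ entries, which supplies the $m^{m/2}$) gives the claimed bound $(p^\nu m \|Q\|)^{m/2}$; the $2$-dimensional $2$-adic blocks are handled by the same completing-the-square computation in two variables.

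I expect the main obstacle to be the bookkeeping at $p = 2$ and, more generally, controlling the interaction between the size of the determinant (equivalently the eigenvalues $a_i$ and the powers of $p$ dividing them) and the exponent $m/2$ so that the constant really comes out as $(cm\|Q\|)^{m/2}$ rather than something with $|A|$ or $\|Q\|^{m}$. Concretely, one must ensure that after diagonalization the entries $a_i$ satisfy $\prod_i p^{\nu_p(a_i)} \le$ a manageable quantity: this is where the trace bound $\sum \lambda_i = \tr A \le m\|Q\|$ (already invoked in the proof of Proposition~\ref{Prop/Q=n}) does the work, since it forces each eigenvalue, and hence a crude bound for each diagonal entry over $\Z_p$, to be at most $m\|Q\|$, whence $\prod_i (p^{\nu_p(a_i)} \cdot (\text{unit part})) \le (m\|Q\|)^m / \text{(something)}$; combined with the $\sqrt{p^\nu}$ per variable this yields exactly $(p^\nu m\|Q\|)^{m/2}$. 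Assembling these prime-power bounds via the multiplicativity step then completes the proof. I would not belabor the classical one-variable Gauss sum evaluations, citing them as standard, and would present the $2$-adic case briefly since it is routine but tedious.
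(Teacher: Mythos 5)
Your multiplicativity reduction contains a genuine gap. You assert that ``$(cm\|Q\|)^{m/2}$ is itself multiplicative in $c$'' and that it therefore suffices to prove $|G_{u'}(d'/p^\nu)|\le (p^\nu m\|Q\|)^{m/2}$ at each prime power. But if $c$ has $\omega$ distinct prime factors, multiplying these local bounds gives $c^{m/2}(m\|Q\|)^{m\omega/2}$, which is far larger than $(cm\|Q\|)^{m/2}$ when $\omega\ge 2$. The constant $(m\|Q\|)^{m/2}$ is a one-time budget, not something you get to spend once per prime. The correct way to run your local argument is to prove the sharper bound $|G_{u'}(d'/p^\nu)|\le p^{\nu m/2}\,p^{\nu_p(|A|)/2}$ (the extra factor coming from $\prod_i p^{\nu_p(a_i)/2}$ over the local diagonal entries), multiply over $p$ to get $c^{m/2}|A|^{1/2}$, and only then use AM--GM on the eigenvalues, $|A|=\prod\lambda_i\le(\tr A/m)^m\le\|Q\|^m$, to conclude $c^{m/2}|A|^{1/2}\le c^{m/2}\|Q\|^{m/2}\le (cm\|Q\|)^{m/2}$. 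You in fact gesture at this at the end (``$\prod_i p^{\nu_p(a_i)}\ldots$''), but the bookkeeping as written does not close, because the per-prime target $(p^\nu m\|Q\|)^{m/2}$ is the wrong thing to aim for.

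Once that is fixed your route is valid, but note that it is a much heavier machine than the paper uses and in a genuinely different direction. The paper does not touch CRT, local diagonalization, or one-variable Gauss sums at all: it simply expands $|G_u(d/c)|^2$, substitutes $h'=h+z$, and observes that the sum over $h$ is a complete additive character sum, hence $|G_u(d/c)|^2\le c^m\,|\{z\bmod c:\; Az\equiv 0\ (\mathrm{mod}\ c)\}|$. It then bounds that solution count by noting that $Az=cb$ forces $b$ into a box of volume $\prod_i|\alpha_i|\le(m\|Q\|)^m$ and that each choice of $b$ admits at most one $z$ since $|A|\ne 0$. That whole argument is five lines and avoids both multiplicativity and any $p$-adic normal form. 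Your approach, once repaired, would actually yield the sharper intermediate bound $c^{m/2}|A|^{1/2}$, which is of independent interest, but for the stated lemma the paper's squaring trick is shorter, avoids the $p=2$ blocks entirely, and sidesteps the constant-distribution pitfall your write-up fell into.
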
	

\begin{proof}
For $(c, d)=1$, we have
\begin{align*}
\Big|G_u\Big(\frac{d}{c}\Big)\Big|^2
& = \sum_{h, \, h'\,({\rm mod}\,c)} {\rm e}\Big(\frac{d}{c}\big(Q(h')-Q(h)+{}^t(h'-h) u\big)\Big)
\\
& = \sum_{h, \, z\,({\rm mod}\,c)} 
{\rm e}\Big(\frac{d}{c}\Big(\frac{1}{2}\,{}^t\!z A h+\frac{1}{2}\,{}^t\!hAz+Q(z)+{}^t\!z u\Big)\Big)
\\\noalign{\vskip 1mm}
& \leq c^m |\{z \,({\rm mod}\,c) : A z\equiv 0\,({\rm mod}\,c)\}|.
\end{align*}
Write $A=(a_{ij})_{m\times m} = {}^t(\alpha_1,\alpha_2,\dots,\alpha_m)$, 
and let $|\alpha|$ denote the $l^1$-norm of a vector $\alpha$.    
There is only one solution for the system $Az=d$ of $m$ linear equations in $m$ variables if $|A|\neq 0$. 
The congruent system $Az \equiv 0\,({\rm mod}\,c)$ can be written as the linear system $Az=cb$ 
with $b={}^t(b_1, \dots, b_m)$, where $b_i$ only takes integer value in the finite range $[1, |\alpha_i|]$. 
That is, there will be at most 
$$
\prod\limits_{i=1}^m|\alpha_i|\leq (m\|Q\|)^m
$$ 
systems, and each system has only one solution since $|A|\neq0$. So the number of solutions 
of the system $Az\equiv 0\,({\rm mod}\,c)$ is at most $ (m\|Q\|)^m$, i.e.	
$$
|\{z \,({\rm mod}\,c) : A z\equiv 0\,({\rm mod}\,c)\}|
\le (m\|Q\|)^m.
$$
Inserting this back, we obtain the desired bound for $|G_u|$. The lemma is proved. 
\end{proof}


\section{Locally determined quadratic forms}
The aim of this section is to give a sufficient condition for a quadratic form to be locally determined. 
This will be done in Proposition \ref{pro:LocallyDetermined}, and after that we will 
offer some examples of locally determined quadratic forms and give a uniform estimate for local densities at bad primes. 

\subsection{Explicit formulae for local densities of quadratic forms} 
We shall quote explicit formulae of local densities $\delta_p(n, Q)$, which will be useful later.
Conserving the notations \eqref{def:localdensity} and \eqref{def:nupnnp}, these formulae can be stated in three different cases:
$$
\left\{
\begin{array}{lll}  
& \!\!\!\!p\nmid 2D, \\ 
& \!\!\!\!\text{odd} \; p\mid D, \\ 
& \!\!\!\!p=2, 
\end{array} 
\right.
$$
where $D$ is the discriminant of the quadratic form $Q$. Primes in the first case are ordinary, but  
those in the second and third cases are bad. 

For ordinary primes we quote Iwaniec \cite[(11.72)]{Iwa97}. 
The formula below shows that we indeed have \eqref{loc/det} in this case. 

\begin{lemma}[Local densities at ordinary primes]\label{Iwaniec} 
Let $Q$ be a primitive positive definite quadratic form in $m=2k\geq 4$ variables with integer coefficients, 
and write $Q$ in the form of \eqref{SieForm}. For $p\nmid 2D$, we have 
$$
\delta_p(n, Q) 
= \bigg(1-\frac{\chi(p)}{p^k}\bigg)
\bigg(1-\frac{\chi(p)}{p^{k-1}}\bigg)^{-1}
\bigg(1-\frac{\chi(p^{\nu_p(n)+1})}{p^{(\nu_p(n)+1)(k-1)}}\bigg), 
$$
where $\chi(q)=\chi_{4D}(q)=\big(\frac{4D}{q}\big)$ is the Jacobi symbol. 
\end{lemma}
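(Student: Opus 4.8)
The plan is to compute the local density $\delta_p(n,Q)$ at an ordinary prime $p\nmid 2D$ directly from the definition \eqref{def:localdensity}, by exploiting the fact that over $\Z_p$ the form $Q$ is $\GL_m(\Z_p)$-equivalent to a diagonal form. First I would recall that since $p\nmid 2D$, the matrix $A$ is invertible mod $p$ and the form $Q$ is nondegenerate over $\Z/p^\nu\Z$ for all $\nu$; by the standard diagonalization of quadratic forms over $\Z_p$ (valid since $p$ is odd and does not divide the discriminant), we may assume $2Q(y)=A[y]\equiv u_1y_1^2+\cdots+u_my_m^2\pmod{p^\nu}$ with each $u_i\in\Z_p^\times$, and then the determinant condition pins down $\prod u_i$ up to squares, which is exactly what the Jacobi symbol $\chi(p)=\big(\tfrac{4D}{p}\big)$ records.

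The key computation is then a counting lemma: for a nondegenerate diagonal quadratic form in $m=2k$ variables over $\Z/p^\nu\Z$, count the number $N_\nu(a)$ of solutions to $Q(y)\equiv a\pmod{p^\nu}$. The cleanest route is via Gauss sums: write the indicator of $Q(y)\equiv a$ as $p^{-\nu}\sum_{t\bmod p^\nu}{\rm e}\big(\tfrac{t(Q(y)-a)}{p^\nu}\big)$, swap the order of summation, and evaluate the resulting product of one-dimensional Gauss sums $\sum_{y\bmod p^\nu}{\rm e}(tu_iy^2/p^\nu)$. For $t$ coprime to $p$ this gives the familiar $p^{\nu/2}$ times a quadratic-residue symbol, and the $m$-fold product produces the factor $\big(\tfrac{4D}{p}\big)$ together with a power of $p$; the terms with $p\mid t$ reduce the modulus and set up the induction on $\nu_p(a)$. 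Equivalently, one can cite the classical evaluation of the number of representations of $a$ by a diagonal form mod $p^\nu$ (e.g.\ as in Iwaniec's book). Carrying the bookkeeping through, one finds after dividing by $p^{\nu(m-1)}$ and letting $\nu\to\infty$ that $\delta_p(n,Q)$ depends on $n$ only through $\nu_p(n)$, and assembling the geometric-series contributions yields precisely
\begin{equation*}
\delta_p(n, Q)
= \bigg(1-\frac{\chi(p)}{p^k}\bigg)
\bigg(1-\frac{\chi(p)}{p^{k-1}}\bigg)^{-1}
\bigg(1-\frac{\chi(p^{\nu_p(n)+1})}{p^{(\nu_p(n)+1)(k-1)}}\bigg).
\end{equation*}
Since the right-hand side manifestly depends only on $\nu_p(n)$, the identity \eqref{loc/det} at such $p$ is immediate, establishing the concluding sentence of the statement.

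Honestly, for the write-up I would simply quote Iwaniec \cite[(11.72)]{Iwa97} for the formula, as the excerpt already signals; the only thing one genuinely needs to add is the observation that $\chi(p^j)=\chi(p)^j$ makes the dependence on $\nu_p(n)$ transparent, hence \eqref{loc/det} holds for ordinary primes. The main obstacle, if one insists on a self-contained proof, is purely technical: keeping the Gauss-sum normalizations and the $p$-adic valuations straight across the induction, and correctly matching the sign/quadratic-residue factor with the chosen convention for the discriminant $D$ (recall $D=(-1)^{m/2}|A|$ here, so the relevant symbol is $\big(\tfrac{4D}{p}\big)$ rather than $\big(\tfrac{|A|}{p}\big)$). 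None of this is conceptually hard, but it is where a careless sign would break the formula, so that is the step I would treat most carefully.
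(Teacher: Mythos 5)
Your proposal is correct and ultimately takes the same route as the paper, which offers no proof of this lemma at all but simply quotes it from Iwaniec \cite[(11.72)]{Iwa97}; you rightly identify that citation as the appropriate course, supplemented by the observation that $\chi(p^j)=\chi(p)^j$ makes the right-hand side manifestly a function of $\nu_p(n)$ alone, establishing \eqref{loc/det} at ordinary primes. Your preliminary Gauss-sum sketch (diagonalize over $\Z_p$, detect $Q(y)\equiv n$ by additive characters mod $p^\nu$, evaluate the resulting quadratic Gauss sums to produce the symbol $\big(\tfrac{4D}{p}\big)$, and pass to the $\nu\to\infty$ limit) is a sound outline of what a self-contained proof would look like, but the paper itself contains nothing of the sort, so there is no discrepancy to flag.
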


Now we turn to the case of bad primes, where the detailed computations of Yang \cite{Yang} 
will be very important. For odd prime $p\mid D$, we may assume that the 
matrix $M$ of $Q$ is $\Z_p$-equivalent to 
$$
\diag(\varepsilon_1 p^{\alpha_1}, \dots, \varepsilon_m p^{\alpha_m})
\;\;\text{with}\;\; 
\varepsilon_h\in\Z_p^*
\;\;\text{for}\;\; 1\leq h\leq m
\;\;\text{and}\;\; 
0\leq\alpha_1\leq \cdots \leq \alpha_m,
$$
which we call the normalization of $Q$ over $\Z_p$. 
The following is Yang \cite[Theorem 3.1]{Yang}. 

\begin{lemma}[Local densities at odd bad primes]\label{Yang3.1} 
Let $Q$ be a primitive positive definite quadratic form in $m=2k\geq 4$ variables with integer coefficients, 
and write $Q$ in the form of \eqref{SieForm}. 
For any odd prime $p\mid D$, we have 
\begin{equation}\label{Yang:odd-prime}
\delta_p(n, Q)
= 1 + \bigg(1-\frac{1}{p}\bigg) \sum_{\substack{1\leq r\leq \nu_p(n)\\ \ell(r)\, \text{\rm even}}} \frac{v_r}{p^{d(r)}}
+ \frac{v_{\nu_p(n)+1}f(n)}{p^{d(\nu_p(n)+1)}},
\end{equation}
where 
\begin{align*}
L(r) 
& := \{1\leq h\leq m : \alpha_h-r<0 \; \text{is odd}\},
\qquad\;\;\,
\ell(r) := |L(r)|,
\\
f(n)
& := \begin{cases}
-\frac{1}{p}  & \text{if $2\mid \ell(\nu_p(n)+1)$},
\\
(\frac{n_p}{p})\frac{1}{\sqrt{p}} &\text{if $2\nmid \ell(\nu_p(n)+1)$},
\end{cases}
\hskip 18,3mm
\delta_p
:= \begin{cases}
1          & \text{if $p\equiv 1 \: ({\rm mod}\,4)$},
\\
\hskip 0,5mm
{\rm i}  & \text{if $p\equiv 3 \: ({\rm mod}\,4)$},
\end{cases}	
\\
d(r)
& := \frac{1}{2}\sum_{\alpha_h<r}(r-\alpha_h)-r,
\hskip 36,8mm
v_r
:= \delta_p^{[\frac{1}{2}\ell(r)]} \prod\limits_{h\in L(r)} \bigg(\frac{\varepsilon_h}{p}\bigg), 
\end{align*}
$\big(\frac{q}{p}\big)$ is the Legendre symbol and $[t]$ is the integral part of $t$.
\end{lemma}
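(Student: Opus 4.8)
The statement is \cite[Theorem~3.1]{Yang}; here we only outline how one arrives at \eqref{Yang:odd-prime}. The plan is to turn the local density into a finite sum of classical quadratic Gauss sums and evaluate each one explicitly. Write ${\rm e}_q(t):={\rm e}(t/q)$. Since $\delta_p(n,Q)$ in \eqref{def:localdensity} is a $\Z_p$-equivalence invariant, I would first diagonalise, replacing the matrix of $Q$ by $\diag(\varepsilon_1p^{\alpha_1},\dots,\varepsilon_mp^{\alpha_m})$, so that $Q(y)\equiv\sum_{h}\varepsilon_hp^{\alpha_h}y_h^2$. Orthogonality of additive characters modulo $p^\nu$ then rewrites the cardinality in \eqref{def:localdensity} as
\[
p^{-\nu}\sum_{a\,({\rm mod}\,p^\nu)}{\rm e}_{p^\nu}(-an)\prod_{h=1}^{m}\Big(\sum_{y\,({\rm mod}\,p^\nu)}{\rm e}_{p^\nu}\big(a\varepsilon_hp^{\alpha_h}y^2\big)\Big).
\]
I would then stratify the outer sum according to the integer $r$ defined by $p^{\nu-r}\,\|\,a$ for $a\neq0$ (with $a=0$ treated separately), writing $a=p^{\nu-r}b$ with $p\nmid b$ and $1\le r\le\nu$. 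For fixed $r$ the $h$-th inner sum is, by periodicity of $y\mapsto{\rm e}_{p^{r-\alpha_h}}(b\varepsilon_hy^2)$, equal to $p^{\nu-r+\alpha_h}$ times the Gauss sum $\sum_{y\,({\rm mod}\,p^{r-\alpha_h})}{\rm e}_{p^{r-\alpha_h}}(b\varepsilon_hy^2)$ when $\alpha_h<r$, and equal to $p^\nu$ when $\alpha_h\ge r$.

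The second step is the explicit evaluation, using that for $p$ odd, $p\nmid b\varepsilon$, and $\ell\ge1$ one has $\sum_{y\,({\rm mod}\,p^\ell)}{\rm e}_{p^\ell}(b\varepsilon y^2)=p^{\ell/2}$ for $\ell$ even and $=\big(\tfrac{b\varepsilon}{p}\big)\delta_p\,p^{\ell/2}$ for $\ell$ odd, with $\delta_p^2=\big(\tfrac{-1}{p}\big)$. Multiplying over the $h$ with $\alpha_h<r$: the $h$ with $\alpha_h<r$ and $r-\alpha_h$ odd are precisely those in $L(r)$, so the product of the Gauss sums equals $p^{\frac12\sum_{\alpha_h<r}(r-\alpha_h)}$ times $\prod_{h\in L(r)}\big(\tfrac{\varepsilon_h}{p}\big)$, times a power of $\delta_p$ (reduced via $\delta_p^2=(\tfrac{-1}{p})$) that will match the factor $v_r$, and times a residual Legendre symbol $\big(\tfrac{b}{p}\big)^{\ell(r)}$. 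Combining with the $p$-powers from the first step and the normalisation $p^{-\nu(m-1)}$ in \eqref{def:localdensity}, all $\nu$-dependence cancels; the term $a=0$ contributes exactly the constant $1$; and for $a\neq0$ there remains the unit sum $\sum_{p\nmid b\,({\rm mod}\,p^r)}\big(\tfrac{b}{p}\big)^{\ell(r)}{\rm e}_{p^r}(-n'b)$, with $n'$ a unit multiple of $n_p$ if $r\le\nu_p(n)$ and of $1$ if $r=\nu_p(n)+1$. For even $\ell(r)$ this is a Ramanujan sum, vanishing unless $p^{r-1}\mid n$ and equal to $p^r-p^{r-1}$ if $p^r\mid n$ (producing the factor $1-\tfrac1p$ and confining the range to $1\le r\le\nu_p(n)$) and to $-p^{r-1}$ if $p^{r-1}\|n$ (the source of the $f(n)=-\tfrac1p$ branch at $r=\nu_p(n)+1$); for odd $\ell(r)$ it is a quadratic Gauss sum in $n_p$, vanishing unless $p^{r-1}\|n$, which supplies the $f(n)=\big(\tfrac{n_p}{p}\big)\tfrac1{\sqrt p}$ branch. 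The surviving terms are exactly those in \eqref{Yang:odd-prime}.

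The main obstacle is the phase bookkeeping: pinning down the precise power of $\delta_p$ in $v_r$ — and hence the exact form of $d(r)$, $v_r$ and $f(n)$ — requires careful handling of the product of Gauss sums of mixed even and odd lengths, of the identity $\delta_p^2=(\tfrac{-1}{p})$, and of how the leftover symbol $\big(\tfrac{b}{p}\big)$ interacts with the unit sum (and, if one works instead with the adjoint form $\widetilde{Q}$, of the attendant quadratic reciprocity). A secondary point needing care is the justification that the $\nu\to\infty$ limit truncates to the displayed finite sum rather than leaving an infinite tail, which follows because for $r\ge\nu_p(n)+2$ the unit sum vanishes identically. Since \eqref{Yang:odd-prime} is exactly \cite[Theorem~3.1]{Yang}, I would in any event check the final constants against that reference; an alternative derivation I would keep in reserve is Siegel's inductive method, splitting solutions of $Q(y)\equiv n\,({\rm mod}\,p^\nu)$ according to whether some variable with $\alpha_h=0$ is a unit (counted by Hensel lifting) or all such variables are divisible by $p$ (reducing to the same problem for a form with the relevant $\alpha_h$ lowered by $2$ and $n$ unchanged), which gives a recursion in $\nu_p(n)$ solving to \eqref{Yang:odd-prime}.
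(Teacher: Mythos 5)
The paper gives no proof of this lemma: it is stated as Yang's Theorem~3.1 and the reader is simply referred to \cite{Yang}. Your sketch therefore does more than the paper by outlining a derivation, and the route you choose --- diagonalise over $\Z_p$, convert the congruence count to additive characters, stratify the dual variable $a$ by its $p$-adic valuation, and evaluate each resulting one-variable quadratic Gauss sum by the even/odd-exponent dichotomy --- is a sound and elementary way to reach the formula. Your reduction of the leftover unit sum to a Ramanujan sum (when $\ell(r)$ is even) or a mod-$p$ quadratic Gauss sum in $n_p$ (when $\ell(r)$ is odd), and the observation that the unit sum vanishes for $r\ge\nu_p(n)+2$ so that the $\nu\to\infty$ limit truncates to a finite sum, are both correct and do account for the $(1-\tfrac1p)$ factor, the range $1\le r\le\nu_p(n)$, and the boundary term with $f(n)$. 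Your honest flagging of the phase bookkeeping as the hard part is well placed: the product of Gauss sums over $h\in L(r)$ naturally produces $\delta_p^{\ell(r)}$, whereas the stated $v_r$ carries $\delta_p^{[\ell(r)/2]}$, and reconciling these (together with how the residual $(\tfrac{b}{p})^{\ell(r)}$ and $(\tfrac{-1}{p})$ factors feed into the unit sum and into $f(n)$) is precisely the kind of normalisation check you say you would perform against Yang's paper. The Siegel-style Hensel recursion you keep in reserve is a third workable route, closer in spirit to classical treatments of local densities.
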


When $p=2$, the matrix $M$ of $Q$ is $\Z_2$-equivalent to
\begin{equation*}\label{Z2equiv}
\diag(\tilde{\varepsilon}_1 2^{\tilde{\alpha}_1}, \dots, \tilde{\varepsilon}_R 2^{\tilde{\alpha}_R})
\oplus\bigg\{\bigoplus_{i=1}^S \varepsilon_i' 2^{\beta_i} 
\begin{pmatrix}
0 & \frac{1}{2}
\\\noalign{\vskip 1mm}
\frac{1}{2} & 0
\end{pmatrix}\bigg\}
\oplus \bigg\{\bigoplus_{j=1}^T \varepsilon_j''2^{\gamma_j} 
\begin{pmatrix}
1 & \frac{1}{2}
\\\noalign{\vskip 1mm}
\frac{1}{2} & 1
\end{pmatrix}\bigg\},
\end{equation*}
where $\tilde{\varepsilon}_h$, $\varepsilon'_i$, $\varepsilon''_j\in\Z_2^*$, 
$\tilde{\alpha}_h\geq 0$, $\beta_i\geq 0$, $\gamma_j\geq 0$ are all integers and $R+2S+2T=m$.
Thus we have the normalization 
$$
Q = \sum_{h=1}^{R} \tilde{\varepsilon}_h 2^{\tilde{\alpha}_h} x_h^2
+ \sum_{i=1}^S\varepsilon_i' 2^{\beta_i}y_{i1}y_{i2}
+ \sum_{j=1}^T\varepsilon_j'' 2^{\gamma_j}(z_{j1}^2+z_{j1}z_{j2}+z_{j2}^2).
$$
The following lemma is Yang \cite[Theorem 4.1]{Yang}.

\begin{lemma}[Local density at $2$]\label{Yang4.1} 
Let $Q$ be a primitive positive definite quadratic form in $m=2k\geq 4$ variables with integer coefficients, 
and write $Q$ in the form of \eqref{SieForm}. 
We have 
\begin{equation}\label{Yang:2-prime}
\begin{aligned}
\delta_2(n, Q)
& = 1 + \sum_{\substack{r=1\\ \tilde{\ell}(r-1) \, {\rm odd}}}^{\nu_2(n)+3}
\bigg(\frac{2}{\mu_r(n)\varepsilon(r)}\bigg) \frac{\delta(r) p(r)}{2^{\tilde{d}(r)+\frac{3}{2}}}
\\
& \hskip 7,2mm
+ \sum_{\substack{r=1\\ \tilde{\ell}(r-1) \, {\rm even}}}^{\nu_2(n)+3}
\bigg(\frac{2}{\varepsilon(r)}\bigg) 
\frac{\delta(r) p(r) \psi(\tfrac{1}{8}\mu_r(n)) {\rm char}(4\Z_2)(\mu_r(n))}{2^{\tilde{d}(r)+1}} ,
\end{aligned}
\end{equation}
where 
\begin{align*}
\tilde{d}(r)
& := \frac{1}{2}\sum_{\tilde{\alpha}_h<r-1}(r-1-\tilde{\alpha}_h) 
+ \sum_{\beta_i<r} (r-\beta_i) + \sum_{\gamma_j<r} (r-\gamma_j)-r,
\\
\tilde{L}(r)
& := \{1\leq h\leq R : \tilde{\alpha}_h-r<0 \; \text{is odd}\},
\hskip 12mm
\tilde{\ell}(r) := |\tilde{L}(r)|,
\\\noalign{\vskip 1mm}
\kappa(r)
& := \sum_{1\le h\le R, \, \tilde{\alpha}_h<r-1} \tilde{\varepsilon}_h,
\hskip 42,5mm
\mu_r(n)
:= 2^{\nu_2(n)+3-r}n_2 - \kappa(r),
\\\noalign{\vskip 1mm}
\Big(\frac{2}{t}\Big)
& := \begin{cases}
(-1)^{\frac{1}{8}(t^2-1)}  & \text{if $t\in\Z_2^*$},
\\
0                          & \text{otherwise},
\end{cases}
\hskip 25mm	
{\rm char}(4\Z_2)(t)
:= \begin{cases}
1 & \text{if $t\equiv 0 \, ({\rm mod}\,4)$},
\\ 
0 & \text{otherwise},
\end{cases}
\\\noalign{\vskip 1mm}
\delta(r)
& := \begin{cases}
0  & \text{if $\tilde{\alpha}_h=r-1$ for some $h$},
\\
1  & \text{otherwise},
\end{cases}
\hskip 15mm
p(r) := (-1)^{\sum_{\gamma_j<r}(\gamma_j-r)},
\end{align*}
$\psi(t) := {\rm e}^{-2\pi{\rm i}t}$ and  
$\varepsilon(r) := \prod_{h\in \tilde{L}(r-1)} \tilde{\varepsilon}_h$.
\end{lemma}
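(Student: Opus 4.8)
The statement is \cite[Theorem~4.1]{Yang}, so one may simply cite it; for completeness I indicate how such an explicit formula is derived. First I would start from \eqref{def:localdensity} and use the standard fact that the defining limit stabilises: for $\nu$ large relative to $\nu_2(n)$ and $\nu_2(|A|)$, detecting the congruence $Q(x)\equiv n\,({\rm mod}\,2^\nu)$ by additive characters gives
$$
\delta_2(n,Q)
= 2^{-\nu m}\sum_{a\,({\rm mod}\,2^\nu)} {\rm e}(-na/2^\nu)\sum_{x\,({\rm mod}\,2^\nu)} {\rm e}(aQ(x)/2^\nu).
$$
Writing each $a/2^\nu$ in lowest terms as $b/2^r$ with $b$ a unit (or $r=0$) regroups the outer sum by the ``scale'' $r\geq 0$, and since $Q(x)$ modulo $2^r$ depends only on $x$ modulo $2^r$, the inner sum collapses to $2^{(\nu-r)m}G(b/2^r)$ with $G(b/2^r):=\sum_{x\,({\rm mod}\,2^r)}{\rm e}(bQ(x)/2^r)$. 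Hence $\delta_2(n,Q)=\sum_{r\geq 0}2^{-rm}\sum_b {\rm e}(-nb/2^r)G(b/2^r)$, a finite sum, since the terms with $r>\nu_2(n)+3$ vanish identically.

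Next I would insert the $\Z_2$-normalisation of $Q$ recalled just above the statement. An invertible linear change of variables over $\Z_2$ is a bijection modulo $2^r$, so $G(b/2^r)$ factors as a product of one-block exponential sums over the unary blocks $\tilde\varepsilon_h 2^{\tilde\alpha_h}x_h^2$, the hyperbolic planes $\varepsilon_i' 2^{\beta_i}y_{i1}y_{i2}$, and the ``odd'' planes $\varepsilon_j'' 2^{\gamma_j}(z_{j1}^2+z_{j1}z_{j2}+z_{j2}^2)$. Each unary block contributes a classical quadratic Gauss sum whose evaluation supplies an eighth root of unity, the factor $\big(\frac{2}{\,\cdot\,}\big)$, and --- the phenomenon special to $p=2$ --- a dichotomy that annihilates the whole scale-$r$ term precisely when some $\tilde\alpha_h=r-1$, which is the role of $\delta(r)$. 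Each hyperbolic plane is self-dual and contributes only a power of $2$; each odd plane contributes a power of $2$ and the sign $p(r)=(-1)^{\sum_{\gamma_j<r}(\gamma_j-r)}$. Multiplying the block factors at scale $r$ yields the normalising powers $2^{\tilde d(r)+\frac{3}{2}}$ or $2^{\tilde d(r)+1}$, while the residue of $n$ ``seen'' at that scale is exactly $\mu_r(n)=2^{\nu_2(n)+3-r}n_2-\kappa(r)$. Finally one carries out the remaining sum over the primitive character $b\,({\rm mod}\,2^r)$: when $\tilde\ell(r-1)$ is odd this sum is itself a Gauss sum and produces the first line of \eqref{Yang:2-prime}, while when $\tilde\ell(r-1)$ is even it collapses to a Ramanujan-type sum that forces the congruence conditions recorded by $\psi(\frac{1}{8}\mu_r(n))$ and ${\rm char}(4\Z_2)(\mu_r(n))$ and produces the second line. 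The odd bad prime case, Lemma~\ref{Yang3.1}, runs along the same lines but is combinatorially lighter.

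The main obstacle --- and the reason this is genuinely Yang's theorem rather than a short computation --- is the delicate $2$-adic bookkeeping. Over $\Z_2$ a quadratic form has three Jordan block types rather than one, a $2$-adic unit is a square if and only if it is $\equiv 1\,({\rm mod}\,8)$ (whence the $\psi(\cdot/8)$ and ${\rm char}(4\Z_2)$ factors and the two distinct normalising powers of $2$), and the mixed contributions of unary and plane blocks at each scale must be tracked exactly so that every eighth root of unity and every parity condition comes out correctly. For this reason I would not reproduce the computation here and would simply invoke \cite[Theorem~4.1]{Yang}.
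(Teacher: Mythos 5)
Your approach matches the paper exactly: the paper offers no proof and simply cites \cite[Theorem~4.1]{Yang}, as you do. Your optional sketch of the derivation (orthogonality, regrouping by the scale $r$, factoring the Gauss sum over the $\Z_2$-Jordan blocks, the vanishing forced by unary blocks with $\tilde\alpha_h=r-1$, and the dichotomy on the parity of $\tilde\ell(r-1)$) correctly captures the structure of Yang's argument, so nothing needs to be added.
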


\subsection{A sufficient condition for locally determined quadratic forms} 
The following proposition gives a sufficient condition for $Q$ to be locally determined.

\begin{proposition}\label{pro:LocallyDetermined}
{\rm (i)}
A primitive positive definite quadratic form $Q$ in $m$ variables is locally determined 
provided
\begin{equation}\label{Cond:LocallyDetermined-odd}
\ell(r)\equiv 0 \, ({\rm mod}\,2)
\end{equation}
for all integers $r\ge 1$ and odd primes $p\mid D$; 
 and
\begin{equation}\label{Cond:LocallyDetermined-2}
\tilde{\ell}(r)\equiv 0 \, ({\rm mod}\,2),
\qquad
\kappa(r)\equiv 0 \, ({\rm mod}\,4)
\end{equation}
for all integers $r\ge 1$. Then 
we have
\begin{equation}\label{odd-prime:final}
\delta_p(n, Q)
= 1 + \bigg(1-\frac{1}{p}\bigg) \sum_{r=1}^{\nu_p(n)} \frac{v_r}{p^{d(r)}}
+ \frac{v_{\nu_p(n)+1}}{p^{d(\nu_p(n)+1)+1}}
= \delta_p(p^{\nu_p(n)}, Q)
\end{equation}
for odd prime $p\mid D$; and
\begin{equation}\label{2-prime:final}
\delta_2(n, Q)
= 1 + \sum_{r=1}^{\nu_2(n)+1}
\bigg(\frac{2}{\varepsilon(r)}\bigg) 
\frac{(-1)^{\frac{1}{4}\kappa(r)+[r/(\nu_2(n)+1)]} \delta(r) p(r) }{2^{\tilde{d}(r)+1}}
= \delta_2(2^{\nu_2(n)}, Q).
\end{equation}

\par
{\rm (ii)}
Under the conditions \eqref{Cond:LocallyDetermined-odd}--\eqref{Cond:LocallyDetermined-2}, 
$|A|$ is a perfect square.
\end{proposition}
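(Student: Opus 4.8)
The plan is to obtain both parts by feeding the hypotheses directly into Yang's explicit local-density formulae recorded in Lemmas~\ref{Yang3.1} and~\ref{Yang4.1}, and reading off which terms survive and where the dependence on $n_p$ disappears. For part (i) at an odd bad prime $p\mid D$, I start from \eqref{Yang:odd-prime}. The hypothesis \eqref{Cond:LocallyDetermined-odd} does three things at once: it makes the side condition ``$\ell(r)$ even'' vacuous, so the sum over $r$ runs over the full range $1\le r\le\nu_p(n)$; it forces $\ell(\nu_p(n)+1)$ to be even, so in the definition of $f(n)$ only the branch $f(n)=-1/p$ occurs, eliminating the sole place where $n_p$ entered; and $v_r$, $d(r)$ depend only on the $\Z_p$-normalisation of $Q$ and on $r$, not on $n$. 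Collecting these, \eqref{Yang:odd-prime} collapses to \eqref{odd-prime:final}, which depends on $n$ only through $\nu_p(n)$; evaluating at $n=p^{\nu_p(n)}$ gives the same expression, i.e. \eqref{loc/det} at $p$.

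At $p=2$ I start from \eqref{Yang:2-prime}. The hypothesis $\tilde{\ell}(r)\equiv 0\pmod 2$ empties the first sum, since it only collects $r$ with $\tilde{\ell}(r-1)$ odd. In the remaining sum I use $\kappa(r)\equiv 0\pmod 4$ together with $\mu_r(n)=2^{\nu_2(n)+3-r}n_2-\kappa(r)$, splitting on $r$: for $r=\nu_2(n)+2$ one has $\mu_r(n)\equiv 2\pmod 4$ and for $r=\nu_2(n)+3$ one has $\mu_r(n)$ odd, so $\mathrm{char}(4\Z_2)(\mu_r(n))=0$ and both terms vanish; for $1\le r\le\nu_2(n)+1$ one has $\mu_r(n)\equiv 0\pmod 4$, and computing $\mu_r(n)$ modulo $8$ — noting $2^{\nu_2(n)+3-r}n_2\equiv 0\pmod 8$ when $r\le\nu_2(n)$ and $\equiv 4\pmod 8$ when $r=\nu_2(n)+1$, since $n_2$ is odd — gives $\psi(\tfrac18\mu_r(n))=(-1)^{\kappa(r)/4+[r/(\nu_2(n)+1)]}$. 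As $\varepsilon(r),\delta(r),p(r),\tilde{d}(r)$ are $n$-independent, the surviving sum is precisely \eqref{2-prime:final}, depending on $n$ only through $\nu_2(n)$, which is \eqref{loc/det} at $2$. I expect this $p=2$ book-keeping — in particular tracking $\mu_r(n)$ modulo $8$ rather than merely modulo $4$, and correctly disposing of the boundary indices $r\in\{\nu_2(n)+1,\nu_2(n)+2,\nu_2(n)+3\}$ — to be the one genuinely delicate point; the rest is substitution.

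For part (ii), since $A$ is positive definite, $|A|$ is a positive integer, hence a perfect square iff $\nu_p(|A|)$ is even for every prime $p$; only $p=2$ and odd $p\mid D$ need attention, as $\nu_p(|A|)=0$ otherwise. For odd $p\mid D$, $\Z_p$-equivalence of $M$ to $\diag(\varepsilon_h p^{\alpha_h})$ gives $\nu_p(|A|)=\sum_{h=1}^m\alpha_h\equiv\#\{h:\alpha_h\ \mathrm{odd}\}\pmod 2$; taking $r>\max_h\alpha_h$ of the appropriate parity, $L(r)$ equals either $\{h:\alpha_h\ \mathrm{odd}\}$ or its complement in $\{1,\dots,m\}$, so \eqref{Cond:LocallyDetermined-odd} forces $\#\{h:\alpha_h\ \mathrm{odd}\}$ to be even, whence $\sum_h\alpha_h$ is even. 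For $p=2$, writing $|A|=2^m\det M$ and using the block normalisation — whose rank-$2$ blocks have determinants of $2$-adic valuation $2\beta_i-2$ and $2\gamma_j-2$ — one gets $\nu_2(|A|)=m+\sum_{h=1}^R\tilde{\alpha}_h+\sum_i(2\beta_i-2)+\sum_j(2\gamma_j-2)\equiv\sum_{h=1}^R\tilde{\alpha}_h\pmod 2$, since $m$ is even; and the same large-$r$ argument applied to $\tilde{\ell}(r)$ via \eqref{Cond:LocallyDetermined-2} shows $\#\{h\le R:\tilde{\alpha}_h\ \mathrm{odd}\}$, hence $\sum_h\tilde{\alpha}_h$, is even. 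Therefore $\nu_p(|A|)$ is even for all $p$, and $|A|$ is a perfect square.
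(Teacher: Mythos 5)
Your proof is correct and follows essentially the same route as the paper: substitute the hypotheses into Yang's local-density formulae (Lemmas~\ref{Yang3.1} and~\ref{Yang4.1}), observe that this kills every term in which $n_p$ enters, and for part~(ii) read off the parity of $\nu_p(|A|)$ from the large-$r$ behaviour of $\ell(r)$ and $\tilde\ell(r)$ together with the $\Z_p$/$\Z_2$ normalisation of $M$. The $p=2$ book-keeping you flag as delicate (tracking $\mu_r(n)$ modulo $8$ and disposing of $r\in\{\nu_2(n)+2,\nu_2(n)+3\}$) is precisely the step the paper compresses into the evaluation of $\psi(\tfrac18\mu_r(n))$ and of $\mathrm{char}(4\Z_2)(\mu_r(n))$, so there is no substantive difference.
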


\begin{proof} 
Recall that a primitive positive definite quadratic form $Q$ is called locally determined if 
$\delta_p(n, Q) = \delta_p(p^{\nu_p(n)}, Q)$ for all integers $n$ and all primes $p\mid 2D$.

Since $\ell(r)$ is always even and $f(n)=-1/p$ under this assumption, 
then the formula \eqref{odd-prime:final} is an immediate consequence of 
\eqref{Yang:odd-prime} of Lemma \ref{Yang3.1}.
Noticing that the right-hand side of \eqref{odd-prime:final} is independent of $n_p$, 
we have $\delta_p(n, Q) = \delta_p(p^{\nu_p(n)}, Q)$ for all integers $n\ge 1$ and odd primes $p\mid D$.

\vskip 0,5mm

Next we consider the case of $p=2$.
Since $\tilde{\ell}(r)$ is always even, 
the first sum on the right-hand side of \eqref{Yang:2-prime} of Lemma \ref{Yang4.1} is empty. 
Thus
\begin{equation}\label{2-prime:1}
\delta_2(n, Q)
= 1 + \sum_{r=1}^{\nu_2(n)+3}
 \bigg(\frac{2}{\varepsilon(r)}\bigg)\frac{\delta(r) p(r)  \psi(\tfrac{1}{8}\mu_r(n)) {\rm char}(4\Z_2)(\mu_r(n))}{2^{\tilde{d}(r)+1}}.
\end{equation}
In view of the assumption that $\kappa(r)\equiv 0\,({\rm mod}\,4)$ for all $r\ge 1$,
the definitions of $\mu_r(n)$ and of ${\rm char}(4\Z_2)(\mu_r(n))$ imply that
$$
{\rm char}(4\Z_2)(\mu_r(n)) 
= \begin{cases}
1 & \text{for $1\le r\le \nu_2(n)+1$},
\\\noalign{\vskip 0,5mm}
0 & \text{for $\nu_2(n)+2\le r\le \nu_2(n)+3$}.
\end{cases}
$$
On the other hand, we have
$$
\psi(\tfrac{1}{8}\mu_r(n))
= {\rm e}^{\pi {\rm i}(\frac{1}{4}\kappa(r)-2^{\nu_2(n)+1-r}n_2)}
= (-1)^{\frac{1}{4}\kappa(r)+[r/(\nu_2(n)+1)]} 
$$
for $1\le r\le \nu_2(n)+1$. Combining these with \eqref{2-prime:1}, 
we find \eqref{2-prime:final}.
Since the right-hand side of \eqref{2-prime:final} is independent of $n_2$, 
we have $\delta_2(n, Q) = \delta_2(2^{\nu_2(n)}, Q)$ for all integers $n\ge 1$.

\vskip 1mm

Then we prove {\rm (ii)}.
We have known that $Q$ is $\Z_p$-equivalent to diagonal forms at odd prime $p$ and 
$\sum_{h=1}^m\alpha_h=\nu_p(|A|)$. 
The condition \eqref{Cond:LocallyDetermined-odd} means that the number of terms under the same power is even, which implies the $p$-adic order of $|A|$ is even. 

Next we consider the case $p=2$.
According to \cite[p.119]{Cassels}, any positive definite quadratic form, over $\Z_2$, can be transformed into the linear combination of diagonal terms and binary positive definite forms 
$$
h(y_{i1},y_{i2})=h_{11}y_{i1}^2+2h_{12}y_{i1}y_{i2}+h_{22}y_{i2}^2
$$ 
with odd $h_{12}$ and even $h_{11}$, $h_{22}$, which are $\Z_2$-equivalent to 
$$
2y_{i1}y_{i2}\quad\text{or}\quad 2y_{i1}^2+2y_{i1}y_{i2}+2y_{i2}^2
$$
according as $$h_{11}h_{22}-h_{12}^2\equiv1\,(\rm mod\,8)\quad\text{or}\quad \equiv5\,({\rm mod}\,8).$$
The first condition $\tilde{\ell}(r)\equiv0\,({\rm mod}\,2)$ of \eqref{Cond:LocallyDetermined-2} implies that $\sum_{h=1}^R\tilde{\alpha}_h$ is even. And the power of 2 coming from non-diagonal terms is always even. 
Thus $|A|$ is a perfect square. 
\end{proof}

\subsection{Examples} 
Now we give some examples of locally determined quadratic forms.

\medskip 

{\sc Example 1}.  
If $m=4k$ then $Q(y) = y_1^2 + \cdots + y_m^2$ is locally determined. 

Note that $D=2^m$ is the discriminant of $Q$.
There is no odd prime $p\mid D$.
We only need to consider the bad prime $2$. 
It is clear that $Q$ is $\Z_2$-equivalent to itself. Comparing with the normalization over $\Z_2$, we have $R=m$, $S=T=0$ and $(\tilde{\alpha}_h, \tilde{\varepsilon}_h) = (0, 1)$ for $1\le h\le m$.
These imply that
$$
\tilde{L}(r) = \begin{cases}
\emptyset       & \text{if $2\mid r$}, 
\\
\{1, \dots, m\} & \text{if $2\nmid r$},
\end{cases}
\qquad
\tilde{\ell}(r) = \begin{cases}
0       & \text{if $2\mid r$},
\\
m & \text{if $2\nmid r$},
\end{cases}
\qquad
\kappa(r) = \begin{cases}
0 & \text{if $r=1$},
\\
m & \text{if $r\ge 2$}.
\end{cases}
$$
Thus the condition \eqref{Cond:LocallyDetermined-2} is satisfied for all integers $r\ge 1$, since $m=4k$. 
This proves that $Q$ is locally determined. Note that
$$
\delta(1)=0, \;\delta(r)=1 \;\;\text{for}\;\;r\geq2, \quad
p(r)=\varepsilon(r)=1
\quad\text{and}\quad
\tilde{d}(r)=2k(r-1)-r.
$$ 
By \eqref{2-prime:final}, we have
\begin{equation}\label{sum-square:delta2}
\delta_2(n, Q)
= 1 + \sum_{r=2}^{\nu_2(n)} \frac{(-1)^{k}}{2^{(2k-1)(r-1)}} - \frac{(-1)^{k}}{2^{(2k-1)\nu_2(n)}}\cdot
\end{equation}

\medskip

{\sc Example 2}.  
Let $Q(y)=\frac{1}{2}A[y]$ be a positive definite quadratic form in $m$ variables with integer coefficients. 
If $Q$ is of level one, then $Q$ is locally determined. 

To prove this, we first note that $Q$ being of level one implies $8\mid m$ and $|A|=1$. 
Hence $D=1$, and it suffices to consider the bad prime $2$. The matrix $M=\tfrac{1}{2}A$ of $Q$ is $\Z_2$-equivalent to 
$$
\bigoplus\limits_{i=1}^S
\begin{pmatrix}
0 & \frac{1}{2}
\\
\frac{1}{2} & 0
\end{pmatrix}, 
$$ 
that is $R=T=0$ and $(\beta_i, \varepsilon'_i) = (0, 1)$ for $1\leq i\leq S=\frac{m}{2}$.
This implies $\tilde{L}(r)=\emptyset$ and $\tilde{\ell}(r)=\kappa(r) = 0$ for all integers $r\ge 1$.
Thus the condition \eqref{Cond:LocallyDetermined-2} is satisfied for all integers $r\ge 1$. 
This proves that $Q$ is locally determined. Further we have
$$
\delta(r)=p(r)=\varepsilon(r)=1,
\qquad
\tilde{d}(r)=(\tfrac{m}{2}-1)r.
$$
By \eqref{2-prime:final}, we have
\begin{equation}\label{level-one:delta2}
\begin{aligned}
\delta_2(n, Q)
& = 1+\sum_{r=1}^{\nu_2(n)}2^{(1-\frac{m}{2})r-1}-2^{(1-\frac{m}{2})(\nu_2(n)+1)-1}
\\
& = \frac{1-2^{-\frac{m}{2}}}{1-2^{1-\frac{m}{2}}} \big(1-2^{(1-\frac{m}{2})(\nu_2(n)+1)}\big).
\end{aligned}
\end{equation} 

\medskip 

{\sc Example 3}.  
The quadratic form
$$
Q(y) = y_1^2+3y_2^2+y_3^2+y_3y_4+y_4^2.
$$
is locally determined. 

To prove this, we note that the matrix $M$ of $Q$ is equal to
$$
M = \begin{pmatrix}
1 & 0 & 0 & 0 
\\
0 & 3 & 0 & 0 
\\
0 & 0 & 1 & \frac{1}{2}
\\
0 & 0 & \frac{1}{2} & 1
\end{pmatrix}
= \diag(1, 3)
\oplus
\begin{pmatrix}
1 & \frac{1}{2}
\\
\frac{1}{2} & 1
\end{pmatrix}.
$$
Since the discriminant $D = (-1)^{\frac{m}{2}}|A| = |2M| = 2^23^2$, 
we only need to consider the local densities at $p=2$ and $p=3$. 
Noticing that the matrix $M$ of $Q$ is $\Z_2$-equivalent to itself, we have $R=2$, $S=0$, $T=1$ and
$$
\tilde{\alpha}_1=\tilde{\alpha}_2=0,
\quad
\tilde{\varepsilon}_1=1, 
\quad
\tilde{\varepsilon}_2=3,
\quad
\gamma_1=0,
\quad
\varepsilon''_1=1.
$$
From these, we easily see that
$$
\tilde{\ell}(r) = \begin{cases}
0 & \text{if $2\mid r$},
\\
2 & \text{if $2\nmid r$},
\end{cases}
\qquad\text{and}\qquad
\kappa(r) = \begin{cases}
0 & \text{if $r=1$},
\\
4 & \text{if $r\ge 2$}.
\end{cases}
$$
Thus the condition \eqref{Cond:LocallyDetermined-2} is verified for all integers $r\ge 1$.

Now let
$$
U = \begin{pmatrix}
1
\\
{} & 1
\\
{} & {} & -1 & 1
\\
{} & {} &  1 & 1
\end{pmatrix}.
$$ 
This is a $3$-adic unit matrix and $|U|=-2$. Noticing that
$$
{}^tUMU=\diag\{1, 3, 1, 3\},
$$
we have
$$
\alpha_1=\alpha_3=0
\quad
\alpha_2=\alpha_4=1,
\quad
\varepsilon_1=\varepsilon_2=\varepsilon_3=\varepsilon_4=1.
$$
These imply that
$$
\ell(r) = \begin{cases}
2 & \text{if $r$ is odd}, 
\\
2 & \text{if $r$ is even}. 
\end{cases}
$$
Thus the condition \eqref{Cond:LocallyDetermined-odd} is verified for the prime 3 and all integers $r\ge 1$. 
Consequently $Q$ is locally determined.

\subsection{Lower and upper bounds for $\varpi(n, Q)$} 
We now multiply the local densities at all bad primes together to define 
\begin{equation}\label{def:vpi}
\varpi(n, Q) := \prod\limits_{p| 2D} \delta_p(n, Q).
\end{equation}
The function $\varpi(n, Q)$ is rather complicated, since each of 
these $\delta_p(n, Q)$ is not only involved but also not multiplicative in $n$. 
We can bound $\varpi(n, Q)$ from below and above in the following form. 

\begin{proposition}\label{SinSerEv}
Let $Q$ be a primitive positive definite quadratic form in $m=2k\geq 6$  variables with integer coefficients, 
and write $Q$ in the form of \eqref{SieForm}. Assume \eqref{assumption}. Then
\begin{equation}\label{pi-pi+}
\varpi^- \leq \varpi(n, Q)\leq \varpi^+
\end{equation}
for all integers $n\ge 1$, where
\begin{equation}\label{def/pi+-}
\varpi^{\pm} 
:= \Big(1\pm\frac{49}{50}\Big) \prod_{p\mid D, \, p\geq 3} \Big(1\pm \frac{1}{p}\Big). 
\end{equation}
\end{proposition}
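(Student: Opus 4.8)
The plan is to estimate $\varpi(n,Q)$ one prime at a time, bounding each local factor $\delta_p(n,Q)$ independently of $n$, and then multiply the resulting bounds together. The natural split is: (a) the odd primes $p\mid D$, where Lemma~\ref{Yang3.1} applies; and (b) the prime $2$, where Lemma~\ref{Yang4.1} applies. For the odd primes, the key observation from the explicit formula \eqref{Yang:odd-prime} is that every term beyond the leading $1$ is, up to the sign and root-of-unity factors $v_r$, a sum of terms of size at most $p^{-d(r)}$; under the assumption $\nu_p(|A|)\le m-4$ one checks that $d(r)\ge 1$ for all relevant $r\ge 1$ (this is where the hypothesis \eqref{assumption} enters, since $d(r)=\frac12\sum_{\alpha_h<r}(r-\alpha_h)-r$ and the total exponent budget $\sum\alpha_h=\nu_p(|A|)$ is constrained). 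Summing the geometric-type tail and including the $f(n)$ term then gives
\begin{equation*}
\Big|\delta_p(n,Q)-1\Big|\le \frac{1}{p}\cdot\frac{1}{1-1/p}+\cdots\le \frac{1}{p},
\end{equation*}
so that $1-\frac1p\le \delta_p(n,Q)\le 1+\frac1p$ for every odd $p\mid D$; one must do the bookkeeping carefully near $p=3$ to make sure the crude constant does not exceed $1\pm\frac1p$, but the estimate is uniform in $n$.

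**The prime $2$.** Here the work is heavier because \eqref{Yang:2-prime} has two sums and the characteristic-function and $\psi$-factors complicate the tail. I would again bound $|\delta_2(n,Q)-1|$ by summing absolute values of all terms over $1\le r\le \nu_2(n)+3$, using $|(\tfrac{2}{\cdot})|\le 1$, $|\psi|=1$, ${\rm char}(4\Z_2)\le 1$, and $\delta(r),|p(r)|\le 1$, so each term is at most $2^{-\tilde d(r)-1}$ (or $2^{-\tilde d(r)-1/2}$ for terms from the first sum). The assumption $\nu_2(|A|)\le m+1$ guarantees $\tilde d(r)$ grows linearly in $r$ with a positive leading rate, so $\sum_{r\ge 1}2^{-\tilde d(r)}$ converges and can be bounded by an absolute constant; collecting everything one gets $|\delta_2(n,Q)-1|\le \frac{49}{50}$ uniformly in $n$. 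The constant $\frac{49}{50}$ is presumably the explicit outcome of this finite computation — I would track the worst case (which occurs when $\tilde d(r)$ starts as small as the hypothesis permits, roughly $\tilde d(1)$ minimal) and verify the geometric sum lands below $\frac{49}{50}$.

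**Assembling the product.** Once I have $1-\frac{49}{50}\le \delta_2(n,Q)\le 1+\frac{49}{50}$ and $1-\frac1p\le \delta_p(n,Q)\le 1+\frac1p$ for each odd $p\mid D$, multiplying over all $p\mid 2D$ in \eqref{def:vpi} and noting $2\mid 2D$ always, while the odd primes appearing are exactly the odd $p\mid D$, yields
\begin{equation*}
\Big(1-\tfrac{49}{50}\Big)\prod_{p\mid D,\,p\ge 3}\Big(1-\tfrac1p\Big)\le \varpi(n,Q)\le \Big(1+\tfrac{49}{50}\Big)\prod_{p\mid D,\,p\ge 3}\Big(1+\tfrac1p\Big),
\end{equation*}
which is exactly \eqref{pi-pi+} with $\varpi^\pm$ as in \eqref{def/pi+-}. (One should note the lower product could in principle be empty or the prime $2$ might not divide $D$, but $2\mid 2D$ regardless, so the factor $1\pm\frac{49}{50}$ is always present; and since $\delta_p(n,Q)>0$ by definition, all factors are genuinely positive, so the inequalities are consistent.)

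**Main obstacle.** The genuinely delicate part is the $p=2$ analysis: extracting a clean uniform bound from \eqref{Yang:2-prime} requires carefully verifying that $\tilde d(r)$ — which depends on the $\Z_2$-normalization exponents $\tilde\alpha_h,\beta_i,\gamma_j$ subject only to $R+2S+2T=m$ and $\sum\tilde\alpha_h+2\sum\beta_i+2\sum\gamma_j=\nu_2(|A|)\le m+1$ — grows fast enough that the tail sum is controlled by an absolute constant strictly below $1$, and then optimizing to pin down the explicit $\frac{49}{50}$. The odd-prime case is structurally identical but much shorter. I expect no subtlety in multiplying the factors, only in nailing the two constants with room to spare.
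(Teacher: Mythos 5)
Your overall shape is exactly the paper's: bound $|\delta_p(n,Q)-1|$ independently of $n$ prime by prime using the Yang formulae, then multiply. The odd-prime half is correct and matches the paper — with $d(r)\ge\frac{m-2}{2}r-\frac{\nu_p(|A|)}{2}$, the geometric sum gives $|\delta_p(n,Q)-1|\le p^{\nu_p(|A|)/2-(m-2)/2}\le 1/p$ under $\nu_p(|A|)\le m-4$, no cleverness required.

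The $p=2$ half as you've sketched it, however, does not close. Your plan is to bound $|\delta_2(n,Q)-1|\le\sum_{r\ge1}\delta(r)2^{-\tilde d(r)-1}$ and ``verify the geometric sum lands below $\frac{49}{50}$.'' But under $\nu_2(|A|)\le m+1$ the inequality $\tilde d(r)\ge\frac{m-2}{2}r-\frac{\nu_2(|A|)}{2}$ only gives $\tilde d(1)\ge-\frac{3}{2}$, so the $r=1$ term alone could contribute as much as $2^{3/2-1}=\sqrt{2}>1$. The naive sum is not $<\frac{49}{50}$; it is not even $<1$. The paper's argument uses a fact you haven't invoked: since $Q$ is \emph{primitive}, the $\Z_2$-normalization must have some $\tilde\alpha_h=0$, or some $\beta_i=0$, or some $\gamma_j=0$. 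In the first case $\delta(1)=0$ (by the definition of $\delta(r)$), so the $r=1$ term vanishes and only the tail $\sum_{r\ge2}2^{-\tilde d(r)-1}\le\sqrt{2}/3$ remains. In the second case the zero index is counted in $\tilde d(1)$, forcing $\tilde d(1)\ge 0$ and hence an $r=1$ contribution of at most $\frac12$; together with the same tail this gives $\le\frac12+\frac{\sqrt2}{3}\approx 0.971<\frac{49}{50}$. Without this case split driven by primitivity there is no route from the crude estimate to the stated constant, so this is a genuine missing idea rather than a detail left to the reader. (A small inaccuracy besides: the first sum in \eqref{Yang:2-prime} has denominator $2^{\tilde d(r)+3/2}$, making those terms \emph{smaller} than the second-sum terms, not larger as your parenthetical ``$2^{-\tilde d(r)-1/2}$'' suggests; this doesn't affect the argument since the two sums never both contribute for a given $r$.)
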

\begin{proof}
Firstly we consider the case of odd prime factor $p$ of $D$. According to \eqref{Yang:odd-prime}, we have
\begin{equation*}
\big|\delta_p(n,Q)-1\big|
\leq \left(1-\frac{1}{p}\right)\sum_{1\leq r\leq \nu_p(n)}\frac{1}{p^{d(r)}}+\frac{f(n)}{p^{d(\nu_p(n)+1)}}
\leq \left(1-\frac{1}{p}\right)\sum_{r\geq 1}\frac{1}{p^{d(r)}}\cdot
\end{equation*}
It suffices to determine what $d(r)$ looks like. If $r> \alpha_m$, we know that every $\alpha_h$ is counted in the summation of $d(r)$, then $$d(r)=\frac{1}{2}\sum_{1\leq h\leq m}(r-\alpha_h)-r=\frac{m-2}{2}r-\frac{\nu_p(|A|)}{2},$$
since $\sum_{h=1}^{m}\alpha_h=\nu_p(|A|)$. 
And if $r\leq \alpha_m$, there are some terms $\alpha_h$ not satisfying $\alpha_h<r$ that should not be counted in $d(r)$, then we have $d(r)\geq\frac{m-2}{2}r-\frac{\nu_p(|A|)}{2}$.
It follows that 
\begin{equation}\label{Bound:oddprime}
\big|\delta_p(n,Q)-1\big|
\leq(1-\tfrac{1}{p})\sum_{r\geq 1}\frac{1}{p^{\frac{m-2}{2}r-\frac{\nu_p(|A|)}{2}}}
=(1-\tfrac{1}{p})\frac{p^{-\frac{m-2}{2}+\frac{\nu_p(|A|)}{2}}}{1-p^{-\frac{m-2}{2}}}
\leq\frac{p^{\frac{\nu_p(|A|)}{2}}}{p^{\frac{m-2}{2}}}\leq\frac{1}{p},
\end{equation}
since $\nu_p(|A|)\leq m-4$.

Next we treat the case of prime factor 2 of $2D$. According to \eqref{Yang:2-prime}, we have
\begin{align*}\label{den/2/c}
|\delta_2(n, Q)-1|\leq \sum_{r=1}^{\nu_2(n)+3} \frac{\delta(r)}{2^{\tilde{d}(r)+1}}
\end{align*}
if ignoring the parity of $\tilde{\ell}(r-1)$. 
And the assumption $\nu_2(|A|)\le m+1$ gives
\begin{equation}\label{sum2}
\sum_{1\leq h\leq R}(\tilde{\alpha}_h+1)
+ 2\Big(\sum_{1\leq i\leq S} \beta_i+\sum_{1\leq j\leq T} \gamma_j\Big)\leq m+1.
\end{equation}
It can be observed as before that if $r>\max\{\tilde{\alpha}_h+1,\beta_i,\gamma_j\}$, we have
\begin{align*}
\tilde{d}(r)
& = \frac{1}{2}\sum_{1\leq h\leq R}(r-1-\tilde{\alpha}_h)+\sum_{1\leq i\leq S}(r-\beta_i)+\sum_{1\leq j\leq T}(r-\gamma_j)-r
\\
& = \Big(\frac{R}{2}+S+T\Big)r-r-\frac{1}{2}\Big(\sum_{1\leq h\leq R} (\tilde{\alpha}_h+1)
+ 2\sum_{1\leq i\leq S} \beta_i + 2\sum_{1\leq j\leq T} \gamma_j\Big)
\\
&=\frac{m-2}{2}r-\frac{\nu_2(|A|)}{2}\cdot
\end{align*}
And $\tilde{d}(r)\geq \frac{m-2}{2}r-\frac{\nu_2(|A|)}{2}$ 
holds if $r$ is smaller.

Note that $Q$ is primitive, which means that there is at least one $\tilde{\alpha}_h$ or $\beta_i$ or $\gamma_j$ equal to $0$. Otherwise, there will be a common divisor among
the coefficients of all terms. 
If $\tilde{\alpha}_h=0$ for some $h$, then $\delta(1)=0$ according to the definition. It follows that
\begin{align*}
\sum_{r=1}^{\nu_2(n)+3} \frac{\delta(r)}{2^{\tilde{d}(r)+1}}
\leq\sum_{r\geq2}\frac{1}{2^{\tilde{d}(r)+1}} =\f{2^{-m+1+\frac{\nu_2(|A|)}{2}}}{1-2^{-\f{m-2}{2}}}
=\frac{2^\frac{\nu_2(|A|)}{2}}{2^{m-1}-2^\frac{m}{2}}
\leq \frac{\sqrt{2}}{3}
\end{align*}
since $m\geq 6$ and $\nu_2(|A|)\le m+1$. 

If there exist one $\beta_i$ or $\gamma_j$ equal to $0$, this one should be counted in the summation of $\tilde{d}(1)$. So we have $\tilde{d}(1)\geq 0$ and
\begin{align*}
|\delta_2(n, Q)-1|
\leq \frac{1}{2^{\tilde{d}(1)+1}}+\sum_{r\geq2} \frac{\delta(r)}{2^{\tilde{d}(r)+1}}
\leq\frac{1}{2}+\sum_{r\geq2} \frac{1}{2^{\tilde{d}(r)+1}}
\leq \frac{1}{2}+\frac{\sqrt{2}}{3}\cdot
\end{align*} 
It follows that 
\begin{equation}\label{Bound:2prime}
\big|\delta_2(n, Q)-1\big|\leq \frac{49}{50}\cdot
\end{equation}
Now the required inequalities follow immediately from \eqref{Bound:oddprime} and \eqref{Bound:2prime}.
\end{proof}

\section{Outline of the proof and Dirichlet series}\label{Dirichlet}
\subsection{Outline of the proof} 

The estimate for $N_Q(B)$ will be deduced from that for 
$N^*_Q(B)$ by the M\"obius inversion in the following manner
\begin{equation}\label{MobInv}
N_Q(B)
= \sum_{d\leq B^{1/(m-1)}} \mu(d) N_Q^*(B^{1/(m-1)}/d), 
\end{equation}
where $\mu(d)$ is the M\"obius function. To investigate $N^*_Q(B)$, the key observation is that 
\begin{equation}\label{def:NdaggerQ(B)}
N^*_Q(B)
= 2 \sum_{1\leq a \leq B} \sum_{1\leq n \leq B^2} \mathbb{1}_3(an) r(n, Q),
\end{equation}
where $r(n, Q)$ is the number of integral solutions to the equation $Q(y)=n$, and 
$\mathbb{1}_3$ is the characteristic function
\begin{equation}\label{def:13n}
\mathbb{1}_3(n)
:= \begin{cases}
1 & \text{if $n$ is a cube}, 
\\
0 & \text{otherwise}. 
\end{cases}
\end{equation}
The next step is to insert the asymptotic formula for $r(n, Q)$ in Proposition~\ref{Prop/Q=n}
into the above \eqref{def:NdaggerQ(B)}, so that we have to handle 
\begin{equation}\label{def:SQxy}
S_Q(x, y):= \sum_{a\leq x} \sum_{n\leq y} \mathbb{1}_3(an) n^{k-1} \mathfrak{S}(n, Q),
\end{equation}
where $\mathfrak{S}(n, Q)$ is the singular series associated to the equation $Q(y)=n$. We are going to 
estimate $S_Q(x, y)$ by methods from multiplicative number theory, and to this end we have to know 
the arithmetic properties of $\mathfrak{S}(n, Q)$ in detail. 

The singular series is defined by 
\begin{equation}\label{def:S(n,Q)}
\mathfrak{S}(n, Q) 
:= \sum_{c=1}^\infty \frac{1}{c^m}\sum_{\substack{d\,({\rm mod}\,c)\\ (c, d)=1}}
\sum_{h_1 ({\rm mod}\,c)} \cdots \sum_{h_m ({\rm mod}\,c)}
{\rm e}\bigg(\frac{d}{c}\big(Q(h_1, \dots, h_m)-n\big)\bigg). 
\end{equation}
It can be written as the product of all local densities 
\begin{equation}\label{SinSer=:2}
\mathfrak{S}(n, Q)
= \prod_p \delta_p(n, Q), 
\end{equation}
which will be handled directly if $Q$ is locally determined. Otherwise, by Lemma~\ref{Iwaniec}, it is further written as
\begin{equation}\label{SinSer=:2}
\mathfrak{S}(n, Q)
= \prod_p \delta_p(n, Q)
= \frac{\sigma_{1-k}(n, \chi)}{L(k, \chi)} \varpi(n, Q), 
\end{equation}
where $L(k, \chi)$ is the Dirichlet $L$-function attached to $\chi$, 
\begin{equation}\label{def:sigmaellchi-pi}
\sigma_{1-k}(n, \chi) := \sum_{d| n} \chi(d) d^{1-k}, 
\end{equation}
and $\varpi(n, Q)$ is as in \eqref{def:vpi}. This explains why we spend such efforts to 
understand $\varpi(n, Q)$.  
In order to prove Theorems~\ref{thm1} and \ref{thm2}, 
we are going to study $S_Q(x, y)$ and 
\begin{align}\label{def:SWxy}
S_W(x, y) := \sum_{a\leq x} \sum_{n\leq y} \mathbb{1}_3(an) n^{k-1} \sigma_{1-k}(n, \chi)
\end{align}
respectively.
Our results are as follows.

\begin{proposition}\label{Asymp:SQxySWxy}
Let $Q$ be a locally determined quadratic form in $m=2k\geq 4$ variables with integer coefficients.  
Then for any $\varepsilon >0$ we have
\begin{align}
S_Q(x, y)
& = x^{\frac{1}{3}} y^{k-\frac{2}{3}} \big\{P_Q(\log x, \log y)
+ O_{k, \varepsilon}\big(x^{-\frac{1}{6}+\varepsilon} + x^{\frac{1}{12}+\varepsilon} y^{-\frac{1}{6}}\big)\big\}
\label{Evaluation:SQxy}, 
\\
S_W(x, y)
& = x^{\frac{1}{3}} y^{k-\frac{2}{3}} \big\{P_W(\log x, \log y)
+ O_{k, \varepsilon}\big(x^{-\frac{1}{6}+\varepsilon} + x^{\frac{1}{12}+\varepsilon} y^{-\frac{1}{6}}\big)\big\}, 
\label{Evaluation:Skxy}
\end{align}
uniformly for $x\ge 2$ and $y\geq 2$, 
where $P_Q(t, u)$ and $P_W(t, u)$ are quadratic polynomials, 
and the implied constant depends on $k$ and $\varepsilon$ only. 
In particular, we have
\begin{align}
S_Q(x, x^2)
& = x^{m-1} P_Q(\log x)+O_{m, \varepsilon}(x^{m-\frac{7}{6}+\varepsilon}),
\label{Cor:SQxx2}
\\
S_W(x, x^2)
& = x^{m-1} P_W(\log x)+O_{m, \varepsilon}(x^{m-\frac{7}{6}+\varepsilon}),
\label{Cor:SWxx2}
\end{align}
where $P_Q(t)$ and $P_W(t)$ are quadratic polynomials with leading coefficients $\mathscr{C}_Q$ and $\mathscr{C}_W$
given by 
\begin{equation}\label{def:scrCQ}
\begin{aligned}
\displaystyle
\mathscr{C}_Q
& := \frac{L(\frac{3}{2}m-2,\chi^3)}{(6m-8)L(\frac{m}{2}, \chi)}
\prod_{p\,\nmid\,2D} \bigg(1-\frac{1}{p}\bigg)^2
\bigg(1+\frac{2}{p}+\frac{3\chi}{p^\frac{m}{2}}+\frac{2\chi^2}{p^{m-1}}+\frac{\chi^2}{p^m}\bigg)
\\
& \hskip 33,5mm\times
\prod_{p| 2D} 
\bigg\{\bigg(1-\frac{1}{p}\bigg)^4\sum_{d\geq 0} \frac{\sum_{\nu=0}^{3d} \delta_p(p^{\nu}, Q)}{p^d}\bigg\}
\end{aligned}
\end{equation}
and
\begin{equation}\label{def:scrCW}
\mathscr{C}_W
:= \frac{L(\frac{3}{2}m-2,\chi^3)}{6m-8}
\prod_{p} \bigg(1-\frac{1}{p}\bigg)^2
\bigg(1+\frac{2}{p}+\frac{3\chi}{p^\frac{m}{2}}+\frac{2\chi^2}{p^{m-1}}+\frac{\chi^2}{p^m}\bigg),
\end{equation}
respectively.
\end{proposition}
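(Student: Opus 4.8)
\medskip
\noindent\emph{Plan of the proof.}
The idea is to attach to the sums \eqref{def:SQxy} and \eqref{def:SWxy} the two-variable Dirichlet series
$$
F_Q(s,w):=\sum_{a,n\ge 1}\frac{\mathbb{1}_3(an)\,n^{k-1}\mathfrak{S}(n,Q)}{a^s\,n^w},
\qquad
F_W(s,w):=\sum_{a,n\ge 1}\frac{\mathbb{1}_3(an)\,n^{k-1}\sigma_{1-k}(n,\chi)}{a^s\,n^w},
$$
to continue them meromorphically a little beyond the polar point $(s,w)=(\tfrac{1}{3},\,k-\tfrac{2}{3})$, and then to recover $S_Q(x,y)$ and $S_W(x,y)$ by a two-dimensional Perron inversion. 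The starting point is that, because $Q$ is locally determined, Lemma~\ref{Iwaniec} together with \eqref{loc/det} gives $\delta_p(n,Q)=\delta_p(p^{\nu_p(n)},Q)$ for \emph{every} prime $p$, so that $\mathfrak{S}(n,Q)=\prod_p\delta_p(p^{\nu_p(n)},Q)$ is a multiplicative function of $n$; likewise $\sigma_{1-k}(n,\chi)$ is multiplicative, and $\mathbb{1}_3(an)$ depends only on the residues modulo $3$ of the $p$-adic valuations of $a$ and of $n$. Hence $F_Q$ and $F_W$ have Euler products $\prod_p F_{Q,p}(s,w)$, absolutely convergent for $\Re s$ and $\Re w-k$ large.

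Next I would compute the local factors. With $u=p^{-s}$, $v=p^{k-1-w}$ and the identity $\sum_{i+j\equiv 0\,(\mathrm{mod}\,3)}u^iv^j=(1+uv^2+u^2v)/((1-u^3)(1-v^3))$, together with the closed form of $\delta_p(p^j,Q)$ from Lemma~\ref{Iwaniec} at $p\nmid 2D$, one finds that $F_Q(s,w)$ is an explicit product of the factors $\zeta(3s)$, $\zeta(3(w-k+1))$, $\zeta(2s+(w-k+1))$, $\zeta(s+2(w-k+1))$, $L(3w,\chi^3)$ and a constant multiple of $L(\tfrac{m}{2},\chi)^{-1}$, times an Euler product converging absolutely near $(\tfrac{1}{3},k-\tfrac{2}{3})$. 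The singular factors are the first four zeta-functions; their frequency vectors $(3,0),(0,3),(1,2),(2,1)$ satisfy the single linear relation $(3,0)+(0,3)=(1,2)+(2,1)$, so (as in the Tauberian analysis of multiple Dirichlet series, or geometrically because $\mathrm{rk}\,\mathrm{Pic}(\widetilde{\mathcal S}_Q)=r+\gamma=3$) the effective order of the singularity at $(\tfrac{1}{3},k-\tfrac{2}{3})$ is $3$ and the resulting log-polynomials have degree $2$. The finitely many factors $p\mid 2D$ are, by the explicit formulae of \S3 and the uniform bounds of Proposition~\ref{SinSerEv}, holomorphic near $(\tfrac{1}{3},k-\tfrac{2}{3})$, each equal to $1+O(p^{-2})$ after pulling out $(1-1/p)^4$, hence bounded uniformly in $Q$, and they assemble precisely into the product over $p\mid 2D$ in \eqref{def:scrCQ}. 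For $F_W$ there are no bad primes, the full $L(\tfrac{m}{2},\chi)$ is retained, and one gets the product over all $p$ in \eqref{def:scrCW}.

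The analytic core is to recover $S_Q(x,y)=\sum_{a\le x}\sum_{n\le y}(\cdots)$ by Perron's formula in each of $a$ and $n$ and to shift both contours to the left past $(\tfrac{1}{3},k-\tfrac{2}{3})$. The residue yields the main term $x^{1/3}y^{k-2/3}P_Q(\log x,\log y)$ with $P_Q$ a quadratic polynomial, the top coefficient arising from an elementary volume computation and carrying in particular the numerical factor $\tfrac{1}{6m-8}$ (the $\tfrac{1}{4}$ being the area of the simplex $\{u+2v\le 1\}$ produced by the Perron truncation, and $\tfrac{1}{3k-2}$ coming from $\sum_{\ell\le L}\ell^{3k-3}\sim L^{3k-2}/(3k-2)$ after the cube substitution $n=\bar a\ell^3$). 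The shifted double integral is bounded by the standard subconvexity and zero-free-region estimates for $\zeta$ and $L(\cdot,\chi)$: moving the $\zeta(3s)$-variable from $\Re(3s)=1^+$ to $\Re(3s)=\tfrac{1}{2}$ gives the error $x^{-1/6+\varepsilon}$, and the corresponding shift in the $w$-aspect, which crosses the cross-term $\zeta(2s+(w-k+1))$, gives $x^{1/12+\varepsilon}y^{-1/6}$; this is \eqref{Evaluation:SQxy}. (Alternatively one fixes $a$, treats the inner sum over $n=\bar a\ell^3$ as a smooth sum of a multiplicative function in $\ell$ via its one-variable Dirichlet series, and then sums over $a$; or one invokes a ready-made Tauberian theorem for multiple Dirichlet series once polynomial growth on vertical lines has been verified.) The argument for $S_W$ is identical and simpler, needing no ``locally determined'' hypothesis. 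Finally, setting $y=x^2$ collapses $P_Q(\log x,\log y)$ to a quadratic in $\log x$, turns $x^{1/3}(x^2)^{k-2/3}$ into $x^{m-1}$, and turns the two error terms into $x^{m-1-1/6+\varepsilon}$ and $x^{m-1-1/4+\varepsilon}$, giving \eqref{Cor:SQxx2}--\eqref{Cor:SWxx2}; evaluating the residue at $(\tfrac{1}{3},k-\tfrac{2}{3})$ --- the factor $\tfrac{1}{6m-8}$, the surviving $L$-values $L(\tfrac{3}{2}m-2,\chi^3)/L(\tfrac{m}{2},\chi)$, and the two convergent Euler products evaluated there --- reproduces exactly \eqref{def:scrCQ} and \eqref{def:scrCW}.

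I expect the main obstacle to be the genuinely two-dimensional contour analysis: continuing $F_Q$ slightly past its polar point inside a product domain and bounding the ensuing double integral with a power saving, \emph{uniformly} for $x,y\ge 2$ and with implied constants independent of $Q$ --- the latter resting on the bad-prime Euler factor being $1+O(p^{-2})$, so that $\prod_{p\mid 2D}(\cdots)$ does not grow with $D$. A secondary, more computational, difficulty is the precise Euler-product identity, which must isolate the correct combination of $\zeta$- and $L$-factors and match the bad-prime product against the one in \eqref{def:scrCQ}.
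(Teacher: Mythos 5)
Your setup is essentially the paper's: you attach the two-variable Dirichlet series $\mathfrak{F}(s,w)$ and $\mathcal{F}(s,w)$, exploit the locally-determined hypothesis to make the $n$-summand multiplicative, compute the Euler product, and isolate the same four singular factors $\zeta(3s)$, $\zeta(2s+(w-k+1))$, $\zeta(s+2(w-k+1))$, $\zeta(3(w-k+1))$ together with the $L$-factors, exactly as in Lemma~\ref{Lem:FQsw}. The diagnosis of the pole structure, the identification of the bad-prime factor with the product in \eqref{def:scrCQ}, and the constant-tracking all look right, as does the specialization $y=x^2$.

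The genuine gap is in the analytic core. You propose to ``recover $S_Q(x,y)$ by Perron's formula in each of $a$ and $n$ and to shift both contours to the left past $(\tfrac13,k-\tfrac23)$.'' Applied directly to the sharp-cutoff sum $S_Q(x,y)$, this does not go through: the two-dimensional Perron integrand has only the decay $\frac{1}{|s|}\cdot\frac{1}{|w|}$ on vertical lines, which is not absolutely integrable, so the iterated contour shift and the truncation errors cannot be controlled with the power savings you claim. You flag this yourself as ``the main obstacle,'' but the parenthetical alternatives you offer (fix $a$ and sum in $\ell$; or invoke a ready-made multi-variable Tauberian theorem) would not produce the uniform error $O(x^{-1/6+\varepsilon}+x^{1/12+\varepsilon}y^{-1/6})$ either. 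The paper resolves this by first applying Perron to the averaged quantity $M_Q(X,Y)=\int_1^Y\int_1^X S_Q(x,y)\,dx\,dy$ (Lemma~\ref{Perron_Formula:MXY}), which replaces the kernel $\tfrac{X^s}{s}$ by $\tfrac{X^{s+1}}{s(s+1)}$ and makes the double integral absolutely convergent, hence amenable to Cauchy's theorem in each variable (Lemmas~\ref{Lem:MXYTU}--\ref{Lem:Evaluate_I3}); it then recovers $S_Q$ from $M_Q$ via the difference operator $\mathscr{D}$ of Lemma~\ref{lem3.1}, in the style of de~la Bret\`eche, choosing the increments $H=X^{5/6}$, $J=X^{-1/6}Y$ to optimize the error. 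Without this smoothing-and-differencing mechanism (or a substitute of comparable strength), the proposed contour argument does not justify the stated power-saving error terms, so your proof as written is incomplete precisely at the step you identify as difficult.
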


The remaining part of the paper is devoted to the proof of Proposition~\ref{Asymp:SQxySWxy}. 

\subsection{Dirichlet series}  
We begin to study the following Dirichlet series associated with $S_Q(x, y)$ and $S_W(x, y)$: 
\begin{align}
\mathfrak{F}(s, w) 
& := \sum_{a\ge 1} \sum_{n\ge 1} \frac{\mathbb{1}_3(an) n^{k-1}\mathfrak{S}(n, Q)}{a^s n^w},
\label{def:scrFsw}
\\
\mathcal{F}(s, w) 
& := \sum_{a\ge 1} \sum_{n\ge 1} \frac{\mathbb{1}_3(an) n^{k-1}\sigma_{1-k}(n, \chi)}{a^s n^w},
\label{def:Fsw}
\end{align}
for $\re s>\frac{1}{3}$ and $\re w>k-\frac{2}{3}$. 

\begin{lemma}\label{Lem:FQsw}
Let $Q$ be a locally determined quadratic form in $m=2k\geq 4$ variables. 
For  $\re s>\frac{1}{3}$ and $\re w>k-\frac{2}{3}$,
we have
\begin{align}
\mathfrak{F}(s, w) 
& = \prod_{0\le j\le 3} \zeta\big((3-j)s+j(w-k+1)\big) \mathfrak{G}(s, w),
\label{Expression:frakFsw}
\\
\mathcal{F}(s, w) 
& = \prod_{0\le j\le 3} \zeta\big((3-j)s+j(w-k+1)\big) \mathcal{G}(s, w),
\label{Expression:calFsw}
\end{align}
where $\mathfrak{G}(s, w)$ and $\mathcal{G}(s, w)$ are Euler products 
given by \eqref{def:scrGsw} and \eqref{def:Gsw} below.
Further, for any $\varepsilon>0$, $\mathfrak{G}(s, w)$ and $\mathcal{G}(s, w)$ converge absolutely for 
\begin{equation}\label{cond:sigmau}
\min_{0\le j\le 3} \re \big((3-j)s+j(w-k+1)\big)\ge \tfrac{1}{2}+\varepsilon,
\end{equation} and
in this half-plane
\begin{equation}\label{UB:Gsw}
\mathfrak{G}(s, w)\ll_{k, \varepsilon} 1,
\qquad
\mathcal{G}(s, w)\ll_{k, \varepsilon} 1.
\end{equation}
\end{lemma}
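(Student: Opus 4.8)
The plan is to compute the Euler products of $\mathfrak{F}(s,w)$ and $\mathcal{F}(s,w)$ prime by prime, factor out the ``dominant'' part as a product of four Riemann zeta functions, and show that the remaining Euler product converges in a larger half-plane with a uniform bound. First I would rewrite the cube-indicator $\mathbb{1}_3(an)$ in a multiplicative way: for each prime $p$, writing $\nu_p(a)=\alpha$ and $\nu_p(n)=\beta$, the condition $p^{\alpha+\beta}$ being a cube is $\alpha+\beta\equiv 0\,(\mathrm{mod}\,3)$, so the whole sum factors as an Euler product over $p$, with the $p$-th local factor
\[
\mathfrak{F}_p(s,w)=\sum_{\substack{\alpha,\beta\ge 0\\ 3\mid \alpha+\beta}} p^{-\alpha s}\,p^{-\beta w}\,p^{\beta(k-1)}\,\delta_p(p^\beta,Q),
\]
where I have used that for locally determined $Q$ the local density $\delta_p(n,Q)$ depends only on $\nu_p(n)$, hence $\mathfrak{S}(n,Q)=\prod_p\delta_p(p^{\nu_p(n)},Q)$ is multiplicative in $n$ (this is exactly the point of the hypothesis, and is why the argument does not apply verbatim to $S_W$ unless one notes $\sigma_{1-k}(n,\chi)$ is multiplicative in $n$ too, giving the analogous $\mathcal{F}_p(s,w)=\sum_{3\mid\alpha+\beta}p^{-\alpha s-\beta w+\beta(k-1)}\sigma_{1-k}(p^\beta,\chi)$).

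Next I would extract the zeta factors. The constraint $3\mid\alpha+\beta$ over $\alpha,\beta\ge 0$ is resolved by the change of variables recording the three residue classes; concretely
\[
\sum_{\substack{\alpha,\beta\ge 0\\ 3\mid\alpha+\beta}} X^\alpha Y^\beta
=\frac{1+XY+X^2Y^2+\text{(lower-order)}}{(1-X^3)(1-XY)(1-X^2Y)(1-Y^3)}\cdot(\cdots),
\]
and the honest bookkeeping shows that the denominator, after summing over all $p$, is precisely
\[
\prod_{0\le j\le 3}\zeta\big((3-j)s+j(w-k+1)\big),
\]
coming from $j=0,1,2,3$ matched with the monomials $Y^3$, $X^2Y$, $XY$, $X^3$ respectively, with the shift $w\mapsto w-k+1$ absorbing the factor $p^{\beta(k-1)}$. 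Defining $\mathfrak{G}(s,w)$ (resp.\ $\mathcal{G}(s,w)$) by dividing $\mathfrak{F}$ (resp.\ $\mathcal{F}$) by this product gives \eqref{Expression:frakFsw}, \eqref{Expression:calFsw}, and provides the formulas \eqref{def:scrGsw}, \eqref{def:Gsw}; for primes $p\nmid 2D$ the factor $\delta_p(p^\beta,Q)$ is the explicit expression of Lemma \ref{Iwaniec}, so the good-prime part of $\mathfrak{G}$ is an explicit Euler product of the shape appearing in $\mathscr{C}_Q$, while the finitely many bad primes contribute a bounded factor.

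Finally I would establish absolute convergence and the bound \eqref{UB:Gsw}. For a good prime $p$, Lemma \ref{Iwaniec} gives $\delta_p(p^\beta,Q)=1+O(p^{-(k-1)})$ with the error decaying geometrically in $\beta$; combined with the three cancelled ``leading'' monomials one finds $\mathfrak{G}_p(s,w)=1+O\big(p^{-2\sigma_{\min}}\big)+O\big(p^{-(k-1)-\sigma_{\min}}\big)$ where $\sigma_{\min}:=\min_{0\le j\le 3}\re\big((3-j)s+j(w-k+1)\big)$, so the product over $p$ converges absolutely and is $\ll_{k,\varepsilon}1$ as soon as $\sigma_{\min}\ge\frac12+\varepsilon$ (using $k\ge 2$ for the second error term). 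The bad primes $p\mid 2D$ need a separate, cruder treatment: there $\delta_p(p^\beta,Q)$ is bounded by an absolute constant (it is a local density, hence $\le$ some $O(1)$ uniformly, or one can invoke the explicit bounds from the proof of Proposition \ref{SinSerEv}), so each bad local factor is $\ll 1$ in the region \eqref{cond:sigmau} and there are at most $O_{k}(\log\|Q\|)$ of them — but since the final statement only claims a bound depending on $k$ and $\varepsilon$, I should be careful to state the lemma's constant as depending on $Q$ through $2D$, or observe that in the ranges where it is applied the bad-prime product is absorbed into the constants $\mathscr{C}_Q$. The main obstacle is precisely this last point: controlling the bad-prime Euler factors uniformly enough, and making sure the monomial bookkeeping in the good-prime factor produces exactly the four zeta functions with no spurious poles on the relevant line, so that the later contour shift in the proof of Proposition \ref{Asymp:SQxySWxy} picks up a pole of order three at $s=\frac13$, $w=k-\frac23$ and hence a quadratic polynomial in the logarithms.
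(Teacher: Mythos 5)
Your approach mirrors the paper's almost exactly: use the locally-determined hypothesis (and Lemma~\ref{Iwaniec}) to make $n\mapsto n^{k-1}\mathfrak{S}(n,Q)$ effectively multiplicative, pass to the Euler product over $p$ with the constraint $3\mid(\mu+\nu)$, factor out the dominant denominator to leave a product of four zeta values, and then bound the residual local factors -- the paper does exactly this by the substitution $\mu+\nu=3d$ and explicit geometric-series manipulation of $\delta_p(p^\nu,Q)$ from Lemma~\ref{Iwaniec}. One slip in your bookkeeping: with $X=p^{-s}$, $Y=p^{-(w-k+1)}$ the four cleared denominators must be $(1-X^3)(1-X^2Y)(1-XY^2)(1-Y^3)$ (matching $\zeta(3s)$, $\zeta(2s+(w-k+1))$, $\zeta(s+2(w-k+1))$, $\zeta(3(w-k+1))$), whereas you list a factor $(1-XY)$ in place of $(1-XY^2)$; this would produce the wrong zeta factor $\zeta(s+w-k+1)$ if carried through literally. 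Your concern about uniformity in $Q$ at the bad primes is fair to raise, but the paper treats it exactly as you suggest at the end: the bad-prime local factors \eqref{def:frakGpsw-pmid2D} are finitely many and each converges in the region \eqref{cond:sigmau}, so for a fixed $Q$ they simply contribute a bounded constant to $\mathfrak{G}$.
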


\begin{proof}
Since $Q$ is locally determined, we have
$\delta_p(n, Q) = \delta_p(p^{\nu_p(n)}, Q)$ for all primes $p\mid 2D$,
where $\nu_p(n)$ is the $p$-adic order of $n$.
This also holds for all primes $p\nmid 2D$ by Lemma~\ref{Iwaniec}. 
Hence we can write the formal Euler product
\begin{align*}
\mathbb{1}_3(an) n^{k-1}\mathfrak{S}(n, Q)
& = \prod_p \mathbb{1}_3(p^{\nu_p(a)+\nu_p(n)}) p^{(k-1)\nu_p(n)} \delta_p(n, Q)
\\
& = \prod_p \mathbb{1}_3(p^{\nu_p(a)+\nu_p(n)}) p^{(k-1)\nu_p(n)} \delta_p(p^{\nu_p(n)}, Q)
\end{align*}
for all integers $a\ge 1$ and $n\ge 1$. 
For convenience we write $\mu=\nu_p(a)$, $\nu=\nu_p(n)$.
This formula allows us to express the Dirichlet series $\mathfrak{F}(s, w)$ in form of the Euler product
\begin{equation}\label{exp:scrFsw}
\begin{aligned}
\mathfrak{F}(s, w)
& = \prod_p \bigg(\mathop{\sum_{\mu\ge 0} \sum_{\nu\ge 0}}_{3| (\mu+\nu)} 
\frac{\delta_p(p^{\nu}, Q)}{p^{\mu s+\nu(w-k+1)}}\bigg)
\\
& = \prod_{p\,\nmid\,2D} \mathfrak{F}_p(s, w) \prod_{p| 2D} \mathop{\sum_{\mu\ge 0} \sum_{\nu\ge 0}}_{3| (\mu+\nu)} 
\frac{\delta_p(p^{\nu}, Q)}{p^{\mu s+\nu(w-k+1)}}
\end{aligned}
\end{equation}
for $\re s>\frac{1}{3}$ and $\re w>k-\frac{2}{3}$, where
$$
\mathfrak{F}_p(s, w)
:= \bigg(1-\frac{\chi(p)}{p^k}\bigg)
\bigg(1-\frac{\chi(p)}{p^{k-1}}\bigg)^{-1}
\mathop{\sum_{\mu\ge 0} \sum_{\nu\ge 0}}_{3| (\mu+\nu)} 
\frac{1-\chi(p^{\nu+1})/p^{(\nu+1)(k-1)}}{p^{\mu s+\nu(w-k+1)}}
$$
for $p\nmid 2D$.
Since $n\mapsto \chi(n)$ is completely multiplicative, we have $\chi(p^{\nu+1})=\chi(p)^{\nu+1}=\chi^{\nu+1}$.
Changing variables $\mu+\nu=3d$, we deduce that
\begin{align*}
\mathfrak{F}_p(s, w)
& = \frac{1-\chi p^{-k}}{1-\chi p^{-(k-1)}}\sum_{d\ge 0} \sum_{0\leq \nu \leq 3d} 
\frac{1-(\chi p^{-(k-1)})^{\nu+1}}{p^{(3d-\nu)s+\nu(w-k+1)}}
\\
& = \frac{1-\chi p^{-k}}{1-\chi p^{-(k-1)}}
\sum_{d\ge 0} \frac{1}{p^{3ds}} 
\bigg(\frac{1-p^{-(3d+1)(w-s-k+1)}}{1-p^{-(w-s-k+1)}} 
- \frac{\chi}{p^{k-1}} \frac{1-(\chi p^{-(w-s)})^{3d+1}}{1-\chi p^{-(w-s)}}\bigg).
\end{align*}
Further we have
\begin{align*}
\sum_{d\ge 0} \frac{1}{p^{3ds}}\cdot \frac{1-p^{-(3d+1)(w-s-k+1)}}{1-p^{-(w-s-k+1)}} 
& = \frac{1}{1-p^{-(w-s-k+1)}} \sum_{d\ge 0} \bigg(\frac{1}{p^{3ds}} - \frac{p^{-(w-s-k+1)}}{p^{3d(w-k+1)}}\bigg) 
\\
& = \frac{1+p^{-(2s+w-k+1)}+p^{-(s+2w-2k+2)}}{(1-p^{-3s})(1-p^{-3(w-k+1)})}, 
\end{align*}
and
\begin{align*}
\sum_{d\ge 0} \frac{\chi p^{-(k-1)}}{p^{3ds}}\cdot \frac{1-(\chi p^{-(w-s)})^{3d+1}}{1-\chi p^{-(w-s)}}
& = \frac{\chi p^{-(k-1)}}{1-\chi p^{-(w-s)}} \sum_{d\ge 0} \bigg(\frac{1}{p^{3ds}} - \chi p^{-(w-s)}\frac{\chi^{3d}}{p^{3dw}}\bigg) 
\\
& = \frac{\chi p^{-(k-1)}+\chi^2 p^{-(2s+w+k-1)}+\chi^3 p^{-(s+2w+k-1)}}{(1-p^{-3s})(1-\chi^3 p^{-3w})}\cdot
\end{align*}
Inserting these two formulae back, we have 
\begin{align*}
\mathfrak{F}_p(s, w)
& = \frac{(1-\chi p^{-k})(1-\chi^3 p^{-3w})^{-1}\mathfrak{F}^*_p(s, w)}{(1-\chi p^{-(k-1)})(1-p^{-3s})(1-p^{-3(w-k+1)})},
\end{align*}
where
\begin{align*}
\mathfrak{F}^*_p(s, w)
& = \big(1-\chi^3 p^{-3w}\big) \big(1+p^{-(2s+w-k+1)}+p^{-(s+2w-2k+2)}\big)
\\
& \quad
- \big(1-p^{-3(w-k+1)}\big) \big(\chi p^{-(k-1)}+\chi^2 p^{-(2s+w+k-1)}+\chi^3 p^{-(s+2w+k-1)}\big).
\end{align*}
An elementary computation shows that
\begin{align*}
\mathfrak{F}^*_p(s, w)
& = (1-\chi p^{-(k-1)})\bigg(1+\frac{1}{p^{2s+w-k+1}}
+\frac{1}{p^{s+2w-2k+2}}
\\
& \quad
+\frac{\chi}{p^{2s+w}}
+\frac{\chi}{p^{s+2w-k+1}}
+ \frac{\chi}{p^{3w-2k+2}}
+\frac{\chi^2}{p^{s+2w}}
+ \frac{\chi^2}{p^{3w-k+1}}
+\frac{\chi^2}{p^{2s+4w-2k+2}}\bigg).
\end{align*}
Combining these two formulae, for $p\nmid 2D$, we can write
\begin{equation}\label{def:Fpsw}
\mathfrak{F}_p(s, w)
= \frac{(1-\chi p^{-k})(1-\chi^3p^{-3w})^{-1} \mathfrak{G}_p(s, w)}
{\prod_{j=0}^3 (1-p^{-((3-j)s+j(w-k+1))})}
\end{equation}
with
\begin{equation}\label{def:frakGpsw-pnmid2D}
\begin{aligned}
\mathfrak{G}_p(s, w)
& := \bigg(1-\frac{1}{p^{2s+w-k+1}}\bigg)
\bigg(1-\frac{1}{p^{s+2(w-k+1)}}\bigg)
\bigg(1+\frac{1}{p^{2s+w-k+1}}
+\frac{1}{p^{s+2(w-k+1)}}
\\
& \quad
+\frac{\chi}{p^{2s+w}}
+\frac{\chi}{p^{s+2w-k+1}}
+ \frac{\chi}{p^{3w-2k+2}}
+\frac{\chi^2}{p^{s+2w}}
+ \frac{\chi^2}{p^{3w-k+1}}
+\frac{\chi^2}{p^{2s+4w-2k+2}}
\bigg).
\end{aligned}
\end{equation}
This proves \eqref{Expression:frakFsw} with 
\begin{equation}\label{def:scrGsw}
\mathfrak{G}(s, w)
:= \frac{L(3w, \chi^3)}{L(k, \chi)} \prod_{p} \mathfrak{G}_p(s, w)
\end{equation}
and
\begin{equation}\label{def:frakGpsw-pmid2D}
\mathfrak{G}_p(s, w)
:= \prod_{0\le j\le 3} \bigg(1-\frac{1}{p^{(3-j)s+j(w-k+1)}}\bigg)
\bigg(\sum_{d\ge 0} \sum_{0\le \nu\le 3d} \frac{\delta_p(p^{\nu}, Q)}{p^{(3d-\nu) s+\nu(w-k+1)}}\bigg)
\end{equation}
for $p\mid 2D$.

The formula \eqref{Expression:calFsw} is much easier to prove, since
$n\mapsto \mathbb{1}_3(an) n^{k-1}\sigma_{1-k}(n, \chi)$ is multiplicative.
We can write the formal Euler product
\begin{equation}\label{exp:calFsw}
\mathcal{F}(s, w)
= \prod_p \bigg(\mathop{\sum_{\mu\ge 0} \sum_{\nu\ge 0}}_{3| (\mu+\nu)} 
\frac{\sigma_{1-k}(p^{\nu}, \chi)}{p^{\mu s+\nu(w-k+1)}}\bigg)
=: \prod_p \mathcal{F}_p(s, w)
\end{equation}
for $\re s>\frac{1}{3}$ and $\re w>k-\frac{2}{3}$.
On the other hand, since $n\mapsto \chi_{4D}(n)$ is completely multiplicative, we have
$$
\sigma_{1-k}(p^{\nu}, \chi)
= \sum_{0\le j\le \nu} (\chi(p) p^{-(k-1)})^j
= \frac{1-(\chi(p) p^{-(k-1)})^{\nu+1}}{1-\chi(p) p^{-(k-1)}}\cdot
$$
Similar to \eqref{def:Fpsw} and \eqref{def:frakGpsw-pnmid2D}, we can prove that
$$
\mathcal{F}_p(s, w)
= \frac{(1-\chi^3p^{-3w})^{-1}\mathfrak{G}_p(s, w)}
{\prod_{j=0}^3 (1-p^{-((3-j)s+j(w-k+1))})},
$$
where $\mathfrak{G}_p(s, w)$ is defined as in \eqref{def:frakGpsw-pnmid2D} for all primes $p$.
This proves the formula \eqref{Expression:calFsw} with
\begin{equation}\label{def:Gsw}
\mathcal{G}(s, w) := L(3w, \chi^3) \prod_{p} \mathfrak{G}_p(s, w).
\end{equation}

Next we prove \eqref{UB:Gsw}. 
Note that \eqref{def:frakGpsw-pmid2D} is a finite product, which is bounded for $p\mid 2D$.
Set $s=\sigma+\mathrm{i}\tau$ and $w=u+\mathrm{i}v$.
It is easy to verify that under the condition \eqref{cond:sigmau}, 
we have
\begin{align*}
2(2\sigma+u-k+1)
& \ge 2(\tfrac{1}{2}+\varepsilon) = 1+2\varepsilon,
\\
2(\sigma+2(u-k+1))
& \ge 2(\tfrac{1}{2}+\varepsilon) = 1+2\varepsilon,
\\
2\sigma+u
& \ge k-1+\tfrac{1}{2}+\varepsilon = k-\tfrac{1}{2}+\varepsilon,
\\
\sigma+2u-k+1
& \ge k-1+\tfrac{1}{2}+\varepsilon = k-\tfrac{1}{2}+\varepsilon,
\\
3u-2k+2
& \ge k-1+\tfrac{1}{2}+\varepsilon = k-\tfrac{1}{2}+\varepsilon,
\\
\sigma+2u
& \ge 2k-2+\tfrac{1}{2}+\varepsilon = 2k-\tfrac{3}{2}+\varepsilon,
\\
3u-k+1
& \ge 2k-2+\tfrac{1}{2}+\varepsilon = 2k-\tfrac{3}{2}+\varepsilon,
\\
2\sigma+4u-2k+2
& \ge 2k-2+\tfrac{1}{2}+\varepsilon = 2k-\tfrac{3}{2}+\varepsilon.
\end{align*}
These together with \eqref{def:frakGpsw-pnmid2D} imply that for $p\nmid 2D$,
$$
\mathfrak{G}_p(s, w)
= 1 + O_{k, \varepsilon}(p^{-1-\varepsilon})
$$
provided \eqref{cond:sigmau} holds,
which implies that the Euler products $\mathfrak{G}(s, w)$ and $\mathcal{G}(s, w)$ converge absolutely
and \eqref{UB:Gsw} holds in this half-plane.
By analytic continuation, \eqref{Expression:frakFsw} and \eqref{Expression:calFsw} are also true in the same domain.
This completes the proof.
\end{proof}

\vskip 8mm

\section{Difference operator}

The basic idea is to apply the method of complex integration to,
instead of our original $S_Q(x, y)$ and $S_W(x, y)$, the quantity
\begin{align}
M_Q(X, Y)
& := \int_1^Y \int_1^X S_Q(x, y) \d x \d y,
\label{def:MQXY}
\\
M_W(X, Y)
& := \int_1^Y \int_1^X S_W(x, y) \d x \d y,
\label{def:MkXY}
\end{align}
which are mean values of $S_Q(x, y)$ and $S_W(x, y)$. 
We remark that $M_Q(X, Y)$ and $M_W(X, Y)$ are much easier to handle.
We will first establish an asymptotic formula for $M_Q(X, Y)$ (resp. $M_W(X, Y)$), and then
derive the asymptotic formula \eqref{Evaluation:Skxy} for $S_Q(x, y)$ (resp. $S_W(x, y)$) 
by an analytic argument involving the difference operator $\mathscr{D}$ defined by
\begin{equation}\label{def:Df}
(\mathscr{D}f)(X, H ;\: Y, J) := f(H, J) - f(H, Y) - f(X, J) + f(X, Y).
\end{equation}
The two quantities $S_W(x, y)$ and $M_W(X, Y)$ are closed related as shown in the following lemma,
which in particular enables one to derive an asymptotic formula for $S_Q(x, y)$ (resp. $S_W(x, y)$)
from that for $M_Q(X, Y)$ (resp. $M_W(X, Y)$).

\begin{lemma}\label{lem3.1}
Let $S_Q(X, Y)$ and $M_Q(X, Y)$ be defined as in \eqref{def:SQxy} and \eqref{def:MQXY}.
Then
$$
(\mathscr{D}M_Q)(X-H, X ;\: Y-J, Y)\le HJS_Q(X, Y)\le (\mathscr{D}M_Q)(X, X+H; \: Y, Y+J)
$$
for $1\le H\le \frac{1}{2}X$ and $1\le J\le \frac{1}{2}Y$.
The same inequalities also hold if we replace $S_Q(X, Y)$ and $M_Q(X, Y)$ by
$S_W(x, y)$ and $M_W(X, Y)$, respectively.
\end{lemma}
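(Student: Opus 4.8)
The plan is to exploit the elementary fact that $S_Q(x,y)$ (and likewise $S_W(x,y)$) is a nondecreasing step function in each of its two variables, so that its double antiderivative $M_Q$ sandwiches it between translates of itself. The first step is therefore to verify monotonicity. Every summand in \eqref{def:SQxy} is nonnegative: $\mathbb{1}_3(an)\ge 0$, $n^{k-1}\ge 0$, and $\mathfrak{S}(n,Q)\ge 0$ because by \eqref{SinSer=:2} it is a product of the local densities $\delta_p(n,Q)$, each of which is a limit of nonnegative quantities by \eqref{def:localdensity}; hence $S_Q(x,y)$ is nondecreasing in $x$ for fixed $y$ and in $y$ for fixed $x$. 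For $S_W$ the summands in \eqref{def:SWxy} involve $\sigma_{1-k}(n,\chi)$, which is multiplicative with $\sigma_{1-k}(p^{\nu},\chi)=\sum_{0\le j\le\nu}(\chi(p)p^{1-k})^{j}$; since $p^{1-k}<1$ when $k\ge 2$, the partial sums of this (possibly alternating) geometric series are positive, so $\sigma_{1-k}(n,\chi)>0$ for every $n\ge 1$ and $S_W(x,y)$ is nondecreasing as well.

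Next I would rewrite the difference operator applied to $M_Q$ as a genuine integral over a rectangle. For $1\le H\le\frac12 X$ and $1\le J\le\frac12 Y$ one has $X-H\ge\frac12 X\ge 1$ and $Y-J\ge\frac12 Y\ge 1$ (recall the lemma is used with $X,Y\ge 2$), so the four rectangles appearing in \eqref{def:Df} all lie in the region of integration $\{x\ge 1,\ y\ge 1\}$ of \eqref{def:MQXY}. Additivity of the double integral then gives directly
\begin{equation*}
(\mathscr{D}M_Q)(X,X+H;\: Y,Y+J)=\int_{Y}^{Y+J}\!\!\int_{X}^{X+H} S_Q(x,y)\,\d x\,\d y,
\qquad
(\mathscr{D}M_Q)(X-H,X;\: Y-J,Y)=\int_{Y-J}^{Y}\!\!\int_{X-H}^{X} S_Q(x,y)\,\d x\,\d y .
\end{equation*}

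Finally I would invoke monotonicity on each rectangle. On $[X,X+H]\times[Y,Y+J]$ we have $S_Q(x,y)\ge S_Q(X,Y)$, so the first integral is at least $HJ\,S_Q(X,Y)$; on $[X-H,X]\times[Y-J,Y]$ we have $S_Q(x,y)\le S_Q(X,Y)$, so the second integral is at most $HJ\,S_Q(X,Y)$. Combining the two bounds yields the asserted chain of inequalities, and the argument for $S_W$ and $M_W$ is identical once the monotonicity from the first step is in hand. I do not expect a real obstacle here; the only points needing a little care are the nonnegativity of $\mathfrak{S}(n,Q)$ and of $\sigma_{1-k}(n,\chi)$ (settled above) and the role of the hypotheses $H\le\frac12 X$, $J\le\frac12 Y$, which are exactly what keeps the translated rectangles inside the domain of the integral \eqref{def:MQXY} so that the decomposition of $\mathscr{D}M_Q$ is valid.
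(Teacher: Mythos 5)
Your proof is correct and follows the standard approach that the paper itself delegates to de la Bret\`eche's \cite[Lemma 2]{Bre98}: nonnegativity of the summands gives that $S_Q$ (and $S_W$) is nondecreasing in each variable, and the difference operator $\mathscr{D}M_Q$ collapses by additivity to the double integral of $S_Q$ over the corresponding rectangle, which then sandwiches $HJ\,S_Q(X,Y)$ by monotonicity. The two side remarks you flag, namely the positivity of $\sigma_{1-k}(n,\chi)$ (a product of partial geometric series with ratio of modulus $p^{1-k}<1$) and the need for the translated rectangles to stay in $\{x\ge 1,\,y\ge 1\}$ (guaranteed by $H\le\frac12 X$, $J\le\frac12 Y$), are exactly the right points to check.
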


The difference operator $\mathscr{D}$ has some properties that we are going to use repeatedly
throughout the paper. These are summarized in the following lemma.

\begin{lemma}\label{lem3.2}
\par
{\rm (i)}
Let $ f(x,y)$ be a function of class $C^3$. Then we have
$$
(\mathscr{D}f)(X, H; \: Y, J)
= (J-Y)(H-X) \bigg\{\frac{\partial^2f}{\partial x\partial y}(X, Y) + O\big(R(X, H; \:Y, J)\big)\bigg\}
$$
for $X\le H$ and $Y\le J$, where
$$
R(X, H; \: Y, J)
:= (H-X)
\max_{\substack{X\le x\le H\\ Y\le y\le J}} \bigg|\frac{\partial^3f}{\partial x^2\partial y}(x, y)\bigg|
+ (J-Y)\max_{\substack{X\le x\le H\\ Y\le y\le J}} \bigg|\frac{\partial^3f}{\partial x\partial y^2}(x, y)\bigg|.
$$
\par
{\rm (ii)}
If $f(X, Y) = f_1(X)f_2(Y)$, then
$$
(\mathscr{D}f)(X, H; \: Y, J) = \big(f_1(H)-f_1(X)\big) \big(f_2(J)-f_2(Y)\big).
$$
\end{lemma}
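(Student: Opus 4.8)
The strategy is to reduce both parts to the fundamental theorem of calculus, and part (ii) requires nothing further: substituting $f(X,Y)=f_1(X)f_2(Y)$ directly into the definition \eqref{def:Df} yields $f_1(H)f_2(J)-f_1(H)f_2(Y)-f_1(X)f_2(J)+f_1(X)f_2(Y)$, which factors immediately as $\big(f_1(H)-f_1(X)\big)\big(f_2(J)-f_2(Y)\big)$; only the product structure of $f$ is used.

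For part (i), the first step is the identity
\[
(\mathscr{D}f)(X,H;\:Y,J) = \int_X^H \int_Y^J \frac{\partial^2 f}{\partial x\,\partial y}(x,y)\,\d y\,\d x,
\]
which follows by applying the fundamental theorem of calculus twice: integrating in $y$ turns the integrand into $\frac{\partial f}{\partial x}(x,J)-\frac{\partial f}{\partial x}(x,Y)$, and integrating that in $x$ produces exactly the four-term combination of \eqref{def:Df}. The hypothesis $f\in C^3$ (in fact continuity of $\partial^2 f/\partial x\partial y$ alone) makes these manipulations legitimate.

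The second step is to compare the integrand on the rectangle $[X,H]\times[Y,J]$ with its value at the corner $(X,Y)$. Writing the difference as a path integral, first along $x$ at height $y$ and then along $y$ at $x=X$,
\[
\frac{\partial^2 f}{\partial x\,\partial y}(x,y) - \frac{\partial^2 f}{\partial x\,\partial y}(X,Y)
= \int_X^x \frac{\partial^3 f}{\partial x^2\,\partial y}(t,y)\,\d t + \int_Y^y \frac{\partial^3 f}{\partial x\,\partial y^2}(X,u)\,\d u,
\]
and bounding each integral by its length times the relevant sup-norm over the rectangle gives $\big|\partial^2 f/\partial x\partial y(x,y)-\partial^2 f/\partial x\partial y(X,Y)\big|\le R(X,H;\:Y,J)$ by the very definition of $R$. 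Inserting this into the double-integral identity and using $X\le H$, $Y\le J$ to extract the non-negative factor $(H-X)(J-Y)=\int_X^H\int_Y^J\d y\,\d x$, we obtain
\[
(\mathscr{D}f)(X,H;\:Y,J) = (J-Y)(H-X)\Big\{\frac{\partial^2 f}{\partial x\,\partial y}(X,Y) + O\big(R(X,H;\:Y,J)\big)\Big\},
\]
which is the claim.

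There is no serious obstacle here; the two points demanding a little care are choosing the path in the second step so that precisely the third derivatives $\partial^3 f/\partial x^2\partial y$ and $\partial^3 f/\partial x\partial y^2$ appear (which dictates moving in $x$ before $y$, and forces the asymmetric roles of the two variables visible in the shape of $R$), and observing that the suprema in $R$ are finite because $f\in C^3$ and the rectangle is compact. One should also note that the asserted equality is genuine (not merely an estimate with an unspecified sign) because the Landau term is absorbed inside the braces after the positive prefactor has been pulled out.
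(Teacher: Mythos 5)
Your proof is correct. The paper itself omits the argument, referring only to \cite{Bre98}; your route---the double-integral representation $(\mathscr{D}f)(X,H;\,Y,J)=\int_X^H\int_Y^J \frac{\partial^2 f}{\partial x\,\partial y}(x,y)\,\mathrm{d}y\,\mathrm{d}x$ obtained from the fundamental theorem of calculus, the comparison of the integrand with its value at the corner $(X,Y)$ along an $L$-shaped path so as to produce exactly the two third-derivative terms appearing in $R$, and the one-line factoring for part (ii)---is precisely the standard argument that de la Bret\`eche's cited lemma carries out.
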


Lemmas~\ref{lem3.1} and \ref{lem3.2} can be proved similarly as in
\cite[Lemma 2]{Bre98} and the details are therefore omitted.
The next elementary estimate is essential and it will also be used several times in the next scetion \cite[Lemma 6(i)]{Bre98}.

\begin{lemma}\label{Lem3.3}
Let $1\le H\le X$ and $|\sigma|\le 10$.
Then for any $\beta\in [0, 1]$, we have
\begin{equation}\label{Lem3.2_Eq_A}
\big|(X+H)^{s} - X^{s}\big|\ll X^{\sigma} ((|\tau|+1)H/X)^{\beta},
\end{equation}
where the implied constant is absolute.
\end{lemma}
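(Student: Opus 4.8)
The plan is to prove the estimate by interpolation between the two extreme exponents $\beta=0$ and $\beta=1$, writing throughout $s=\sigma+\mathrm{i}\tau$.

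First I would dispose of the case $\beta=0$, where the claim is simply $|(X+H)^{s}-X^{s}|\ll X^{\sigma}$. Since $1\le H\le X$ we have $X\le X+H\le 2X$, so $|X^{s}|=X^{\sigma}$ and $|(X+H)^{s}|=(X+H)^{\sigma}\le\max(1,2^{\sigma})\,X^{\sigma}$, which is $\ll X^{\sigma}$ because $|\sigma|\le 10$ makes $2^{\sigma}\le 2^{10}$ an absolute constant; the triangle inequality then finishes this case. Next, for $\beta=1$, I would use the fundamental theorem of calculus in the form
\[
(X+H)^{s}-X^{s}=s\int_{X}^{X+H}t^{s-1}\,\d t .
\]
Here $|s|\le|\sigma|+|\tau|\le 10+|\tau|\ll|\tau|+1$, and for $X\le t\le X+H\le 2X$ one has $t^{\sigma-1}\le\max(1,2^{\sigma-1})\,X^{\sigma-1}\ll X^{\sigma-1}$, again uniformly in $|\sigma|\le 10$. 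Hence the integral is $\ll H X^{\sigma-1}$, giving $|(X+H)^{s}-X^{s}|\ll(|\tau|+1)H X^{\sigma-1}=X^{\sigma}\big((|\tau|+1)H/X\big)$, which is the assertion for $\beta=1$.

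Finally, for a general $\beta\in[0,1]$, set $A:=|(X+H)^{s}-X^{s}|$. The two cases just proved give $A\ll X^{\sigma}$ and $A\ll X^{\sigma}(|\tau|+1)H/X$ with absolute implied constants. Writing $A=A^{1-\beta}A^{\beta}$ and applying the first bound to $A^{1-\beta}$ and the second to $A^{\beta}$ yields
\[
A\ll\big(X^{\sigma}\big)^{1-\beta}\big(X^{\sigma}(|\tau|+1)H/X\big)^{\beta}=X^{\sigma}\big((|\tau|+1)H/X\big)^{\beta},
\]
with an absolute implied constant, as required. (This mirrors the argument of \cite[Lemma 6(i)]{Bre98}.)

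There is essentially no genuine obstacle here; the only point that needs care is the uniformity of all implied constants over the compact range $|\sigma|\le 10$, which is guaranteed by the crude bounds $2^{\sigma},\,2^{\sigma-1}\le 2^{10}$ used above. The interpolation step is purely formal and requires no regularity beyond the two endpoint estimates.
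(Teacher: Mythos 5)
Your proof is correct, and it supplies the standard argument: the paper itself does not prove this lemma but simply cites \cite[Lemma 6(i)]{Bre98}, and your endpoint-plus-interpolation scheme (trivial bound at $\beta=0$, mean-value/fundamental-theorem bound at $\beta=1$, then $A=A^{1-\beta}A^{\beta}$) is precisely the classical route used there. The uniformity over $|\sigma|\le 10$ is handled correctly via the crude bound $2^{|\sigma|}\le 2^{10}$.
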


\section{Proof of Proposition \ref{Asymp:SQxySWxy}}\label{ProofPropositionSkxy}

We shall evaluate $M_Q(X, Y)$ defined by \eqref{def:MkXY}, from which we shall deduce
\eqref{Evaluation:SQxy} of Proposition \ref{Asymp:SQxySWxy} by applying the difference operator $\mathscr{D}$ 
defined as in \eqref{def:Df}. 
The asymptotic formula \eqref{Evaluation:Skxy} can be proved in the exactly same way.
We suppose
\begin{equation}\label{Condition:XYHJ}
1\le H\le \tfrac{1}{2}X,
\qquad
1\le J\le \tfrac{1}{2}Y,
\end{equation}
and fix the following notations: 
\begin{equation}\label{def:kappa_Lambda_L}
\begin{aligned}
s
& := \sigma+\mathrm{i}\tau,
\hskip 14mm
z :=w-k+1= u+\mathrm{i}v,
\hskip 11mm
\mathcal{L}:=\log X,
\\
\kappa
& := \tfrac{1}{3}+\mathcal{L}^{-1},
\qquad
\lambda_k
:= k-\tfrac{2}{3}+2\mathcal{L}^{-1},
\hskip 21,4mm
\lambda := \tfrac{1}{3}+2\mathcal{L}^{-1}.
\end{aligned}
\end{equation}
The following proposition is an immediate consequence of
Lemmas \ref{Perron_Formula:MXY}--\ref{Lem:Evaluate_I3} below.

\begin{proposition}\label{Pro:MQXY}
We have
\begin{align*}
M_Q(X, Y)
& = Y^{k-1} \big\{(XY)^{\frac{4}{3}} \big(P_1(\log Y) + P_2(\log(X^{-\frac{1}{2}}Y)) + P_3(\log(X^{-2}Y)\big)
\\
& \hskip 14mm
+ R_0(X, Y) + R_1(X, Y) + R_2(X, Y) + R_3(X, Y)\big\}
\end{align*}
for $X\ge 2$ and $Y\ge 2$,
where the $R_j(X,Y)$ and $P_j(t)$ are defined as in
\eqref{def:R0XY},
\eqref{def:R1XY},
\eqref{def:R2XY},
\eqref{def:R3XY}
and \eqref{def:P1t}, \eqref{def:P2t}, \eqref{def:P3t} below,
respectively.
\end{proposition}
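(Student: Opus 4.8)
The plan is to compute $M_Q(X,Y)$ by a two-dimensional Mellin--Perron method: represent it as a double contour integral built from $\mathfrak{F}(s,w)$, move both contours to the left, and read the main term off the residues at the (near-)confluence of the four polar divisors of the zeta-factors, the shifted integrals producing the $R_j$.

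\textbf{Step 1: a doubly smoothed Perron formula.} Since $M_Q(X,Y)=\int_1^Y\!\int_1^X S_Q(x,y)\,\d x\,\d y$ involves one integration in each variable, integrating the conditions $a\le x$ and $n\le y$ replaces them by $(X-a)^+$ and $(Y-n)^+$; with the elementary identity $\f{1}{2\pi\mi}\int_{(c)}\f{\xi^{s+1}}{s(s+1)}\,\d s=(\xi-1)^+$ for $c>0$, applied to $\xi=X/a$ and $\xi=Y/n$, one obtains, for $\kappa$ and $\lambda_k$ as in \eqref{def:kappa_Lambda_L},
\[
M_Q(X,Y)=\f{1}{(2\pi\mi)^2}\int_{(\kappa)}\int_{(\lambda_k)}\mathfrak{F}(s,w)\,\f{X^{s+1}}{s(s+1)}\,\f{Y^{w+1}}{w(w+1)}\,\d w\,\d s,
\]
the double integral converging absolutely because $\mathfrak{F}$ does on these lines by Lemma~\ref{Lem:FQsw} and the kernel is $O(|s|^{-2}|w|^{-2})$; this is Lemma~\ref{Perron_Formula:MXY}. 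Inserting $\mathfrak{F}(s,w)=\zeta(3s)\zeta(2s+z)\zeta(s+2z)\zeta(3z)\,\mathfrak{G}(s,w)$ with $z=w-k+1$ from Lemma~\ref{Lem:FQsw}, and recalling that $\mathfrak{G}$ is holomorphic and bounded in the range \eqref{cond:sigmau}, the only relevant singularities of the integrand in the half-planes we shall cross are the simple polar lines $3s=1$, $2s+z=1$, $s+2z=1$, $3z=1$, all of which pass through the single point $(s,z)=(\tfrac13,\tfrac13)$.

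\textbf{Step 2: contour shifting and residues.} I would first move the $s$-line from $\re s=\kappa$ leftward past the three $s$-divisors $s=\tfrac13$, $s=\tfrac{1-z}{2}$, $s=1-2z$ (all lying just left of $\kappa$ when $\re z=\tfrac13+2\mathcal{L}^{-1}$), producing a shifted double integral together with three single integrals in $w$. In the residue at $s=\tfrac13$ the remaining factors $\zeta(\tfrac23+z)$, $\zeta(\tfrac13+2z)$, $\zeta(3z)$ share a pole at $z=\tfrac13$, and since the powers combine there as $X^{4/3}Y^{z+k}$, moving the $w$-line past this third-order pole yields $Y^{k-1}(XY)^{4/3}$ times a quadratic in $\log Y$. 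Similarly the residue at $s=\tfrac{1-z}{2}$ carries $X^{(3-z)/2}Y^{z+k}$ and yields $Y^{k-1}(XY)^{4/3}$ times a quadratic in $\log(X^{-1/2}Y)$, and the residue at $s=1-2z$ carries $X^{2-2z}Y^{z+k}$ and yields a quadratic in $\log(X^{-2}Y)$; thus the three $s$-divisors account precisely for $P_1$, $P_2$, $P_3$. The leftover pieces, namely the twice-shifted double integral and the three once-shifted single $w$-integrals that meet no further pole (the trivial poles $s,w\in\{0,-1\}$ lying still further left), are bounded on their new lines by the convexity bound for $\zeta$, the estimate $\mathfrak{G}\ll_{k,\va}1$ of \eqref{UB:Gsw}, and the kernel decay; after choosing the new abscissae these are exactly $R_0$, $R_1$, $R_2$, $R_3$ of \eqref{def:R0XY}--\eqref{def:R3XY}. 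Each of these evaluations is the content of one of the Lemmas~\ref{Perron_Formula:MXY}--\ref{Lem:Evaluate_I3}, and summing them gives the displayed formula.

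\textbf{Step 3: the main difficulty.} The technical heart is the residue analysis at the confluence $(s,z)=(\tfrac13,\tfrac13)$: computing the iterated third-order residues correctly, tracking the quadratic dependence on the shifted logarithms, and --- most delicately --- bounding the shifted integrals with a dependence on $X$ and $Y$ sharp enough in both variables that, after the difference operator $\mathscr{D}$ of \eqref{def:Df} is applied in \S\ref{ProofPropositionSkxy}, the $R_j$ reproduce the error term $x^{-1/6+\va}+x^{1/12+\va}y^{-1/6}$ of Proposition~\ref{Asymp:SQxySWxy}. This requires uniform convexity bounds for $\zeta$ on vertical lines slightly to the left of the polar lines and careful bookkeeping keeping the three "slope" contributions separated; the Perron step and the final summation are routine by comparison.
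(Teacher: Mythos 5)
Your overall framework agrees with the paper: a double Perron formula expresses $M_Q(X,Y)$ as a two-dimensional contour integral of $\mathfrak{F}(s,w)/(s(s+1)w(w+1))$ (Lemma~\ref{Perron_Formula:MXY}), the $s$-contour is then shifted past the three simple poles $s=s_j(z)$ to produce $I_1+I_2+I_3+R_0$ (Lemma~\ref{Lem:MXYTU}), and $I_1$, $I_2$ are handled by shifting the $z$-contour past the triple pole at $z=\tfrac13$ (Lemmas~\ref{Lem:Evaluate_I1}, \ref{Lem:Evaluate_I2}). A minor slip: in $R_0$ only the $s$-line is moved (to $\sigma_0$), the $z$-line stays at $\lambda$, so it is once-shifted, not twice.

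The genuine gap is your treatment of $I_3$. You propose to handle it exactly like $I_1$ and $I_2$, by shifting the $z$-line past $z=\tfrac13$ and reading off a third-order residue plus a shifted integral. But the integrand of $I_3$ carries $X^{2-2z}Y^{z+1}$, and the $X$-exponent $2-2u$ \emph{increases} as $\re z=u$ decreases. Quantitatively, with the paper's choice $u_0=\tfrac16+\varepsilon$, the differenced shifted integral is $\ll X^{1-2u_0}Y^{u_0}HJ\cdot X^\varepsilon\asymp X^{2/3}Y^{1/6}HJ$; at the relevant specialisation $Y=X^2$ this is $\asymp X\cdot HJ$, which is the same size as the main term $(XY)^{1/3}HJ=X\cdot HJ$ coming from $\mathscr{D}\mathcal{M}_Q$. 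So the contour shift gives no saving at all for $I_3$ (the exponent combination $f(u)+2u$ for $f(z)=2-2z$ is identically $2$, exactly on the critical threshold). By contrast, $I_1$ has constant $X$-exponent $\tfrac43$ and $I_2$ has $X^{(3-z)/2}$, whose slope in $u$ is only $-\tfrac12$, so for those two the shift is profitable — that is why the paper treats them this way but not $I_3$.

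What the paper actually does for $I_3$ (Lemma~\ref{Lem:Evaluate_I3}) is keep the $z$-contour at $(\lambda)$, write $X^{2-2z}Y^{z+1}=(XY)^{4/3}\mathrm{e}^{\xi(z-1/3)}$ with $\xi=\log(X^{-2}Y)$, and split the exponential into its degree-$2$ Taylor polynomial at $z=\tfrac13$ plus a remainder. The Taylor part gives the quadratic $P_3(\xi)$ with coefficients $a_j$ defined as convergent integrals on $(\lambda)$, while $R_3$ is \emph{defined} as the remainder integral on $(\lambda)$; the crucial gain is that $\mathrm{e}^{\xi(z-1/3)}-\sum_{j\le 2}\tfrac{1}{j!}\xi^j(z-\tfrac13)^j$ is $O(\xi^3)$, and after applying $\mathscr{D}$ this is $O((H/X)^3+(J/Y)^3)$, yielding the bound \eqref{UB_TR3} which is genuinely smaller than the main term. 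Your proposed $R_3$ — a shifted integral on $(u_0)$ — would neither satisfy the required bound nor agree with the paper's definition \eqref{def:R3XY}, so the proposition as stated would not follow from your argument. You need the Taylor-remainder device (or some other mechanism that exploits the near-cancellation $\xi\approx 0$) specifically for $I_3$.
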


The proof is divided into several parts.

\subsection{Application of Perron's formula} 
The first step is to apply Perron's formula twice to  
$M_Q(X, Y)$.  

\begin{lemma}\label{Perron_Formula:MXY}
For $X\ge 2$ and $Y\ge 2$ we have
\begin{equation}\label{def:MXY}
M_Q(X, Y)
= Y^{k-1}M(X, Y),
\end{equation}
where $(\kappa) := (\kappa-{\rm i}\infty, \kappa+{\rm i}\infty)$ and
$$
M(X, Y)
:= \frac{1}{(2\pi {\rm i})^2} 
 \int_{(\lambda)} 
\bigg(\int_{(\kappa)} \mathfrak{F}(s, z+k-1) 
\frac{X^{s+1} \d s}{s(s+1)}
\bigg)
\frac{Y^{z+1} \d z}{(z+k-1)(z+k)}\cdot
$$
\end{lemma}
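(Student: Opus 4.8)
The plan is to start from the definition $M_Q(X,Y)=\int_1^Y\int_1^X S_Q(x,y)\d x\d y$ and feed in $S_Q(x,y)=\sum_{a\le x}\sum_{n\le y}\mathbb{1}_3(an)n^{k-1}\mathfrak{S}(n,Q)$. First I would swap the order of summation and integration (justified by absolute convergence, since all terms are nonnegative and the ranges are finite), so that a term indexed by $(a,n)$ with $a\le X$, $n\le Y$ is weighted by $\int_a^X\d x=X-a$ and $\int_n^Y\d y=Y-n$. This realizes $M_Q(X,Y)$ as a smoothed (Ces\`aro-type, order one in each variable) partial sum of the coefficients $\mathbb{1}_3(an)n^{k-1}\mathfrak{S}(n,Q)$, which is precisely the shape to which the twice-iterated Perron formula applies.

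Next I would invoke the standard Perron-type identity in its integrated form: for $c>0$ and $N\ge 1$ one has $\frac{1}{2\pi\mathrm{i}}\int_{(c)}\frac{(X/N)^{s+1}}{s(s+1)}\d s = X-N$ when $N\le X$ and $=0$ otherwise (up to the usual handling of $N=X$, which is a measure-zero coincidence contributing nothing after the outer $y$-integration, or can be absorbed harmlessly). Applying this once in the $a$-variable with the abscissa $\kappa=\tfrac13+\mathcal{L}^{-1}$ (chosen to lie just to the right of the pole line $\re s=\tfrac13$ of $\mathfrak{F}$, cf.\ \eqref{def:kappa_Lambda_L} and Lemma~\ref{Lem:FQsw}), and once in the $n$-variable with abscissa matching $\re w=k-\tfrac23$, i.e.\ $\re z=\tfrac13+2\mathcal{L}^{-1}=\lambda$, converts the double sum into the double contour integral. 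The Dirichlet series that assembles is exactly $\sum_{a,n}\mathbb{1}_3(an)n^{k-1}\mathfrak{S}(n,Q)a^{-s}n^{-w}=\mathfrak{F}(s,w)$ with $w=z+k-1$, evaluated at $\re s=\kappa>\tfrac13$ and $\re w = \lambda+k-1>k-\tfrac23$, which is inside the region of absolute convergence where $\mathfrak{F}$ is given by \eqref{def:scrFsw}. Pulling out the factor $Y^{k-1}$ (it comes from writing $n^{k-1}=n^{k-1}$ and recombining with the $Y^{z+1}$ weight; more precisely one introduces $z=w-k+1$ so that the $n$-sum carries $n^{-z}$ and the $Y$-power is $Y^{z+1}$, with the residual $Y^{k-1}$ factored to the front) yields the claimed formula for $M(X,Y)$.

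The only genuine subtlety — and the step I would be most careful about — is the legitimacy of interchanging the two contour integrals with the double summation, and of truncating/extending the Perron integrals to full vertical lines. Because the coefficients are nonnegative and grow only polynomially ($n^{k-1}\mathfrak{S}(n,Q)\ll_\varepsilon n^{k-1+\varepsilon}$ by boundedness of the singular series, cf.\ Proposition~\ref{SinSerEv} and Lemma~\ref{Iwaniec}), and because the kernels $\frac{X^{s+1}}{s(s+1)}$ and $\frac{Y^{z+1}}{(z+k-1)(z+k)}$ decay like $|s|^{-2}$ and $|z|^{-2}$ on the vertical lines, Fubini applies on each line and the resulting iterated integral converges absolutely; this is exactly the classical second-order Perron formula and requires no error term. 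I would present this as a direct application, citing the standard reference for the integrated Perron formula, and simply verify that the chosen abscissae $\kappa,\lambda$ place both integrals strictly inside the domain $\re s>\tfrac13$, $\re w>k-\tfrac23$ where $\mathfrak{F}(s,w)$ is defined by its Dirichlet series. The remainder of the lemma is then bookkeeping of the change of variable $w\mapsto z=w-k+1$.
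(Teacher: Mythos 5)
Your approach is essentially the same as the paper's, which simply cites the integrated Perron formula (Tenenbaum, Theorem II.2.3) twice and then makes the substitution $w=z+k-1$ so that $Y^{w+1}/\big(w(w+1)\big)=Y^{k-1}\,Y^{z+1}/\big((z+k-1)(z+k)\big)$, exactly as you describe. One small slip: the kernel identity should read $\frac{1}{2\pi\mathrm{i}}\int_{(c)}\frac{X^{s+1}}{N^{s}\,s(s+1)}\,\mathrm{d}s = X-N$ for $1\le N\le X$ (the exponents on $X$ and $N$ differ by one), not $\frac{(X/N)^{s+1}}{s(s+1)}$, which would give $X/N-1$; but since you correctly use the value $X-N$ downstream, the rest of the argument, including the Fubini justification and the choice of abscissae $\kappa,\lambda$ inside the half-plane of absolute convergence of $\mathfrak{F}$, is sound.
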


\begin{proof}
Applying the Perron's formula \cite[Theorem II.2.3]{Tenenbaum1995} twice, we can get 
$$
M_Q(X, Y)
= \frac{1}{(2\pi {\rm i})^2} 
 \int_{(\lambda_k)} 
\bigg(\int_{(\kappa)} \mathfrak{F}(s, w) 
\frac{X^{s+1}}{s(s+1)} \d s
\bigg)
\frac{Y^{w+1}}{w(w+1)} \d w,
$$
which implies the required formula by changement of variable $w=z+k-1$.
\end{proof}

\subsection{Application of Cauchy's theorem} 
In this subsection, we shall apply Cauchy's theorem to evaluate the integral in $s$ in $M(X, Y)$.
Set
\begin{equation}\label{def:sjz} 
s_j(z) := \frac{1-(j-1)z}{4-j},
\quad
(j=1, 2, 3)
\end{equation} 
and
\begin{equation}\label{def:Fk*} 
\begin{cases}
\displaystyle
\mathfrak{F}_1^*(z)
:= \frac{1}{3}\cdot 
\frac{\zeta(3z) \zeta(2z+\frac{1}{3}) \zeta(z+\frac{2}{3}) \mathfrak{G}(s_1(z), z+k-1)}{s_1(z)(s_1(z)+1)},
\\\noalign{\vskip 1,2mm}
\displaystyle
\mathfrak{F}_2^*(z)
:= \frac{1}{2}\cdot 
\frac{\zeta(3z) \zeta(\frac{1+3z}{2}) \zeta(\frac{3-3z}{2}) \mathfrak{G}(s_2(z), z+k-1)}{s_2(z)(s_2(z)+1)},
\\\noalign{\vskip 1,2mm}
\displaystyle
\mathfrak{F}_3^*(z)
:= \frac{\zeta(3z) \zeta(2-3z) \zeta(3-6z) \mathfrak{G}(s_3(z), z+k-1)}{s_3(z)(s_3(z)+1)}\cdot
\end{cases}
\end{equation}

\begin{lemma}\label{Lem:MXYTU}
For $X\ge 2$, $Y\ge 2$ and $\sigma_0\in (\tfrac{1}{6}, \tfrac{1}{4})$ we have
\begin{equation}\label{Evaluate:MXY}
M(X, Y)
= I_1 + I_2 + I_3 + R_0(X, Y),
\end{equation}
where 
\begin{equation}\label{def:Ij}
I_j :=  \frac{1}{2\pi {\rm i}} \int_{(\lambda)} 
\frac{\mathfrak{F}_j^*(z) X^{(5-j-(j-1)z)/(4-j)} Y^{z+1}}{(z+k-1)(z+k)} \d z
\end{equation}
and
\begin{equation}\label{def:R0XY}
R_{ 0}(X, Y)
:= \frac{1}{(2\pi\mathrm{i})^2} \int_{(\lambda)} 
\bigg(\int_{(\sigma_0)} \frac{\mathfrak{F}(s, z+k-1) X^{s+1}}{s(s+1)} \d s\bigg)
\frac{Y^{z+1}}{(z+k-1)(z+k)} \d z.
\end{equation}
Further we have
\begin{equation}\label{UB_TR0}
\left.
\begin{array}{rl}
(\mathscr{D}R_0)(X, X+H; Y, Y+J)\!
\\\noalign{\vskip 1mm}
(\mathscr{D}R_0)(X-H, X; Y-J, Y)\!
\end{array}
\right\}
\ll_{k, \varepsilon, \sigma_0} X^{\sigma_0+\varepsilon} Y^{\frac{1}{3}+\varepsilon} H J
\end{equation}
uniformly for $(X, Y, H, J)$ in \eqref{Condition:XYHJ},
where the implied constant depends on $k$, $\varepsilon$ and $\sigma_0$.
\end{lemma}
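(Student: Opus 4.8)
The plan is to start from the double Perron integral representation $M(X,Y)$ in Lemma~\ref{Perron_Formula:MXY} and, for fixed $z$ on the line $\re z=\lambda$, shift the inner contour in $s$ from $\re s=\kappa$ leftward to $\re s=\sigma_0\in(\tfrac16,\tfrac14)$. By Lemma~\ref{Lem:FQsw}, the integrand $\mathfrak{F}(s,z+k-1)X^{s+1}/(s(s+1))$ is, up to the analytic and bounded factor $\mathfrak{G}(s,z+k-1)$, a product $\prod_{0\le j\le 3}\zeta\big((3-j)s+jz\big)$ divided by $s(s+1)$. In the strip $\sigma_0<\re s<\kappa$ the only poles come from the three zeta factors with $j=1,2,3$ (the $j=0$ factor $\zeta(3s)$ has its pole at $s=\tfrac13$, which lies to the \emph{right} of $\kappa$ for our choice $\kappa=\tfrac13+\mathcal L^{-1}$, so it is not crossed — wait, I must double-check: $\kappa>\tfrac13$, so $s=\tfrac13<\kappa$, hence $\zeta(3s)$ \emph{does} contribute; rather it is the factor $\zeta((3-j)s+jz)$ for $j=1,2,3$ whose pole $s=s_j(z)=(1-(j-1)z)/(4-j)$ one must locate relative to $[\sigma_0,\kappa]$). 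The point of the definition \eqref{def:sjz} is exactly that, with $\re z=\lambda=\tfrac13+2\mathcal L^{-1}$, each $s_j(z)$ has real part slightly below $\tfrac13$ and above $\tfrac16$, so all three simple poles are crossed while $\sigma_0$ is chosen below all of them; meanwhile $\zeta(3s)$ is holomorphic there. So the first step is to verify these pole locations carefully and confirm $s=0,-1$ are not in the strip.

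Next I would compute the three residues. At $s=s_j(z)$ the factor $\zeta((3-j)s+jz)$ has a simple pole with residue $1/(3-j)$ in the $s$-variable (chain rule), and evaluating the remaining zeta factors at $s=s_j(z)$ produces exactly the combinations appearing in \eqref{def:Fk*}: for $j=1$ one gets $\zeta(3s_1(z))\zeta(2s_1(z)+z)\,[\text{from }j=3]\cdots$ — more precisely one checks that the three surviving zeta arguments become $3z$, $2z+\tfrac13$, $z+\tfrac23$ for $j=1$; $3z$, $\tfrac{1+3z}{2}$, $\tfrac{3-3z}{2}$ for $j=2$; and $3z$, $2-3z$, $3-6z$ for $j=3$, together with the factor $\mathfrak{G}(s_j(z),z+k-1)/(s_j(z)(s_j(z)+1))$ and the numerical prefactor $1/(4-j)$. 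This identifies the residue at $s=s_j(z)$, after multiplying by $X^{s+1}$, as $\mathfrak{F}_j^*(z)X^{(5-j-(j-1)z)/(4-j)}$, so that integrating the outer $z$-contour against $Y^{z+1}/((z+k-1)(z+k))$ yields precisely $I_j$ in \eqref{def:Ij}. The shifted inner integral over $\re s=\sigma_0$, reinserted into the outer integral, is by definition $R_0(X,Y)$, establishing \eqref{Evaluate:MXY}.

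For the bound \eqref{UB_TR0} on the difference operator applied to $R_0$, the factor $Y^{z+1}$ in $R_0$ is of the separable form $f_2(Y)$ times a function of $z$ only, so by Lemma~\ref{lem3.2}(ii) $\mathscr{D}$ acting in the $(Y,Y+J)$ slot replaces $Y^{z+1}$ by $(Y+J)^{z+1}-Y^{z+1}$, which by Lemma~\ref{Lem3.3} is $\ll Y^{u}((|v|+1)J/Y)^{\beta}$ with $u=\lambda$; similarly $X^{s+1}\mapsto (X+H)^{s+1}-X^{s+1}\ll X^{\sigma_0}((|\tau|+1)H/X)^{\beta'}$. Choosing $\beta,\beta'$ slightly less than $1$ gains the factors $HJ$ while leaving polynomially small powers $(|\tau|+1)^{\beta'}$, $(|v|+1)^{\beta}$ that are absorbed by the denominators $s(s+1)$ and $(z+k-1)(z+k)$ together with the convexity/growth bound for $\mathfrak{F}(s,w)$ on the line $\re s=\sigma_0$ (which follows from \eqref{UB:Gsw} and standard convexity estimates for the zeta factors, since on $\re s=\sigma_0$, $\re z=\lambda$ the arguments $(3-j)s+jz$ all have real part safely in the critical-strip-or-better range $\ge \tfrac12+\varepsilon$ demanded in \eqref{cond:sigmau}), making the double integral absolutely convergent and of size $\ll X^{\sigma_0+\varepsilon}Y^{\lambda+\varepsilon}HJ\ll X^{\sigma_0+\varepsilon}Y^{1/3+\varepsilon}HJ$. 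The main obstacle is the bookkeeping in the residue computation — tracking which zeta factor contributes the pole, which three survive, and checking that their arguments collapse to the clean expressions in \eqref{def:Fk*} — together with making sure the vertical growth of $\mathfrak{F}$ on $\re s=\sigma_0$ is genuinely integrable after the $\mathscr D$-gain; the contour shift and the $\mathscr D$-estimate themselves are routine once the pole structure is pinned down.
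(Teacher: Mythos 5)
Your overall strategy is the same as the paper's: shift the inner $s$-contour from $(\kappa)$ to $(\sigma_0)$, pick up the residues at the three simple poles $s_j(z)$ to produce $I_1,I_2,I_3$, and then apply $\mathscr{D}$ and Lemma~\ref{Lem3.3} with $\beta=1-\varepsilon$ to the shifted integral $R_0$. Your catalogue of the surviving zeta arguments at each $s_j(z)$ is correct, and you recover the right numerical prefactor $1/(4-j)$. But you do misattribute the poles: the pole $s_j(z)=\frac{1-(j-1)z}{4-j}$ comes from the factor $\zeta\bigl((4-j)s+(j-1)z\bigr)$ (that is, the index-$(j-1)$ factor in $\prod_{0\le j'\le 3}\zeta((3-j')s+j'z)$), with residue $\frac{1}{4-j}$ in $s$ --- not from $\zeta((3-j)s+jz)$ with residue $\frac{1}{3-j}$ as you first write. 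You self-correct to $1/(4-j)$ later, but the stated residue claim would, if followed literally, give the wrong constants.

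The more serious issue is the absolute convergence of the double integral defining $(\mathscr{D}R_0)$. You assert that the ``convexity/growth bound'' for the zeta factors, combined with the $\mathscr{D}$-gain, makes the integral absolutely convergent. This is not so. On the line $\re s=\sigma_0\in(\tfrac16,\tfrac14)$, $\re z=\lambda\approx\tfrac13$, the relevant zeta factors have real parts roughly $\tfrac12$, $\tfrac23$, $\tfrac56$, and even with the Weyl-type subconvexity estimate \eqref{UB:zeta} the product grows like $(|\tau|+|v|)^{1/3+o(1)}$ in the worst case. After the $\mathscr{D}$-gain $(|\tau|+1)^{1-\varepsilon}(|v|+1)^{1-\varepsilon}$ against the kernels $|s(s+1)|^{-1}\ll(|\tau|+1)^{-2}$ and $|(z+k-1)(z+k)|^{-1}\ll(|v|+1)^{-2}$, one is left with a $\tau$-integrand of the order $|\tau|^{1/3-1-\varepsilon}=|\tau|^{-2/3-\varepsilon}$ at fixed $v$, which is \emph{not} integrable. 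Pointwise (sub)convexity bounds are therefore insufficient, and the convergence cannot be declared ``routine.'' The paper bridges exactly this gap by discarding pointwise bounds in favour of the second- and fourth-moment estimates for $\zeta$ on vertical lines (the display \eqref{2,4meanvalue}), combined with H\"older's inequality, to show that the integral
$\int_{-\infty}^{\infty}\frac{|\zeta(3\sigma_0+\mathrm{i}3\tau)\zeta(2\sigma_0+\lambda+\mathrm{i}(2\tau+v))\zeta(\sigma_0+2\lambda+\mathrm{i}(\tau+2v))|}{(|\tau|+1)^{1+\varepsilon}}\,\mathrm{d}\tau\ll 1$
uniformly in $v$. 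Without this moment/H\"older step, or an appeal to the (unproven) Lindel\"of hypothesis, the proof of \eqref{UB_TR0} is incomplete. You should also be aware that establishing \eqref{Evaluate:MXY} itself requires a truncated-contour argument (segments $[\sigma_0\pm\mathrm{i}T,\kappa\pm\mathrm{i}T]$ and the limit $T\to\infty$), which the paper carries out using \eqref{UB:zeta}; your sketch elides this, though it is indeed more routine than the convergence issue above.
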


\begin{proof}
We want to calculate the integral
$$
\frac{1}{2\pi {\rm i}} \int_{(\kappa)}
\frac{\mathfrak{F}(s, z+k-1) X^{s+1}}{s(s+1)} \d s
$$
for any individual $z=\lambda+\mathrm{i}v$ with $v\in \R$. And we shall cut the above infinite integral into finite one.
Let $T\ge (XY)^2 + |v|$.
The residue theorem allows us to deform the segment of the integration $[\kappa-\text{i}T, \kappa+\text{i}T]$ 
into the path joining the end-points 
$\kappa-\text{i}T$, $\sigma_0-\text{i}T$, $\sigma_0+\text{i}T$ and $\kappa+\text{i}T$.
The points $s_j(z)\;(j=1, 2, 3)$, given by \eqref{def:sjz},
are the simple poles of the integrand in the rectangle 
$\sigma_0\le \sigma\le \kappa$ and $|\tau|\le T$.
The residue of $\frac{\mathfrak{F}(s, z+k-1)}{s(s+1)} X^{s+1}$ at the poles $s_j(z)$ is equal to
\begin{equation}\label{def:residue}
\mathfrak{F}_j^*(z) X^{(5-j-(j-1)z)/(4-j)},
\end{equation}
where $\mathfrak{F}_j^*(z) \; (j=1, 2, 3)$ are defined as in \eqref{def:Fk*}.

In order to control the contribution of the segments $[\sigma_0\pm\text{i}T, \kappa\pm\text{i}T]$ 
and of the half-line $[\kappa\pm\text{i}T, \kappa\pm\text{i}\infty)$, we need the well-known estimate 
(cf. e.g. \cite[page 146, Theorem II.3.7]{Tenenbaum1995}) 
\begin{equation}\label{UB:zeta}
\zeta(s)\ll |\tau|^{\max\{(1-\sigma)/3, 0\}} \log |\tau|
\end{equation}
for $\frac{1}{2}\le \sigma<1, \; \tau\in \R$.

When $\sigma_0\le \sigma\le \kappa$ and $u=\lambda$,
it is easy to check that
\begin{equation}\label{Cond:sigmau}
\min_{0\le j\le 3} ((3-j)\sigma+ju)
\ge \tfrac{1}{2}+\varepsilon,
\end{equation}
which coincides with \eqref{cond:sigmau}.
It follows from \eqref{UB:zeta} and \eqref{UB:Gsw} that,  
for $\sigma_0\le \sigma\le \kappa$, $\tau\in \R$ and $u=\lambda, v\in \R$,
$$
\mathfrak{F}(s, z+k-1)
\ll_{k, \varepsilon, \sigma_0} (|\tau|+|v|+3)^{\frac{2}{3}-2\sigma+3\varepsilon}. 
$$
This implies
\begin{align*}
\int_{\sigma_0\pm{\rm i}T}^{\kappa\pm{\rm i}T} 
\frac{\mathfrak{F}(s, z+k-1) X^{s+1}}{s(s+1)} \d s 
& \ll_{k, \varepsilon, \sigma_0} \frac{X}{T^{\frac{4}{3}-\varepsilon}} 
\int_{\sigma_0}^{\kappa}\bigg(\frac{X}{T^2}\bigg)^{\sigma} \d \sigma
\ll_{k, \varepsilon, \sigma_0} \frac{X}{T},
\\
\int_{\kappa\pm{\rm i}T}^{\kappa\pm{\rm i}\infty} 
\frac{\mathfrak{F}(s, z+k-1) X^{s+1}}{s(s+1)} \d s
& \ll_{k, \varepsilon, \sigma_0} X^{4/3+\varepsilon} 
\int_{T}^{\infty} \frac{\d \tau}{\tau^{2-\varepsilon}}
\ll_{k, \varepsilon, \sigma_0} \frac{X^{4/3+\varepsilon}}{T^{1-\varepsilon}}\cdot
\end{align*}
Cauchy's theorem then gives 
\begin{align*}
\frac{1}{2\pi {\rm i}} \int_{(\kappa)} \frac{\mathfrak{F}(s, z+k-1) X^{s+1}}{s(s+1)} \d s
& = \sum_{j=1}^3 \mathfrak{F}_j^*(z) X^{(5-j-(j-1)z)/(4-j)}
\\
& + \frac{1}{2\pi {\rm i}} \int_{\sigma_0-{\rm i}T}^{\sigma_0+{\rm i}T} 
\frac{\mathfrak{F}(s, z+k-1) X^{s+1}}{s(s+1)} \d s
+ O_{k, \varepsilon, \sigma_0}\bigg(\frac{X^2}{\sqrt{T}}\bigg).
\end{align*}
Making $T\to\infty$, we find that
\begin{equation}\label{integral}
\begin{aligned}
\frac{1}{2\pi {\rm i}} \int_{(\kappa)} \frac{\mathfrak{F}(s, z+k-1) X^{s+1}}{s(s+1)} \d s
& = \sum_{j=1}^3 \mathfrak{F}_j^*(z) X^{(5-j-(j-1)z)/(4-j)}
\\
& + \frac{1}{2\pi {\rm i}} \int_{(\sigma_0)} \frac{\mathfrak{F}(s, z+k-1) X^{s+1}}{s(s+1)} \d s,
\end{aligned}
\end{equation}
which implies \eqref{Evaluate:MXY}.

Finally we prove \eqref{UB_TR0}.
For $s=\sigma_0+{\rm i}\tau$ with $\tau\in \R$ and $z=\lambda+{\rm i}v$ with $v\in \R$, 
in view of \eqref{Cond:sigmau}, we can apply \eqref{UB:Gsw} to get 
$$
\mathfrak{F}(s, z+k-1)
\ll_{k, \varepsilon, \sigma_0} 
\zeta(3\sigma_0+\text{i}3\tau) 
\zeta(2\sigma_0+\lambda+\text{i}(2\tau+v)) 
\zeta(\sigma_0+2\lambda+\text{i}(\tau+2v)) 
$$
and, by Lemma \ref{Lem3.3} with $\beta=1-\varepsilon$, 
\begin{align*}
r_{s, z}(X, H; Y, J)
& := \big((X+H)^{s+1}-X^{s+1}\big) \big((Y+J)^{z+1}-Y^{z+1}\big)
\\
& \ll X^{\sigma_0+\varepsilon} Y^{\frac{1}{3}+\varepsilon} H J (|\tau|+1)^{1-\varepsilon} (|v|+1)^{1-\varepsilon}. 
\end{align*}
With the help of the well-known bound
\begin{equation}\label{2,4meanvalue}
\int_0^t \big(|\zeta(\sigma+\text{i}\tau)|^2+|\zeta(\sigma+\text{i}\tau)|^4\big) \d \tau\ll t\log^4(t+3),
\quad
(\tfrac{1}{2}\le \sigma\le 2, \, t\ge 0)
\end{equation}
and the H\"older inequality, we can derive that
\begin{align*}
& \hskip -2mm
\int_{-\infty}^{\infty} \frac{|\zeta(3\sigma_0+\text{i}3\tau) 
\zeta(2\sigma_0+\lambda+\text{i}(2\tau+v)) 
\zeta(\sigma_0+2\lambda+\text{i}(\tau+2v))|}
{(|\tau|+1)^{1+\varepsilon}} 
\d \tau
\\
& \ll \bigg(
\int_{-\infty}^{\infty} \frac{|\zeta(3\sigma_0+\text{i}3\tau)|^4}{(|\tau|+1)^{1+\varepsilon}}\d \tau
\int_{-\infty}^{\infty} \frac{|\zeta(2\sigma_0+\lambda+\text{i}(2\tau+v))|^4}{(|\tau|+1)^{1+\varepsilon}}\d \tau
\bigg)^{1/4}
\\
& \hskip 1,4mm\times
\bigg(\int_{-\infty}^{\infty} \frac{|\zeta(\sigma_0+2\lambda+\text{i}(\tau+2v))|^2}
{(|\tau|+1)^{1+\varepsilon}}\d \tau\bigg)^{1/2}
\\\noalign{\vskip 2mm}
& \ll_{k, \varepsilon, \sigma_0} 1.
\end{align*}
The last inequality is obtained by integration by parts.
These estimates and Lemma \ref{lem3.2}(ii) imply 
\begin{align*}
(\mathscr{D}R_0)(X, X+H; Y, Y+J)
& = \int_{(\lambda)} 
\int_{(\sigma_0)}  
\frac{\mathfrak{F}(s, z+k-1)r_{s, z}(X, H; Y, J)}{s(s+1)(z+k-1)(z+k)} \frac{\d s \d z}{(2\pi\mathrm{i})^2}
\\\noalign{\vskip 1mm}
& \ll_{k, \varepsilon, \sigma_0} X^{\sigma_0+\varepsilon} Y^{\frac{1}{3}+\varepsilon} H J.
\end{align*}
This completes the proof.
\end{proof}

\subsection{Evaluation of $I_1$ and $I_2$}

\begin{lemma}\label{Lem:Evaluate_I1}
For $X\ge 2$, $Y\ge 2$ and $u_0\in (\frac{1}{6}, \frac{1}{4})$ we have
\begin{equation}\label{Evaluate:I1}
I_1 = (XY)^{\frac{4}{3}} P_1( \log Y) + R_1(X, Y),
\end{equation}
where $P_1(t)$ is a quadratic polynomial and
\begin{equation}\label{def:R1XY}
R_1(X, Y)
:= \frac{1}{2\pi {\rm i}} \int_{(u_0)} 
\frac{\mathfrak{F}_1^*(z) X^{4/3} Y^{z+1}}{(z+k-1)(z+k)} \d z.
\end{equation}
Further we have
\begin{equation}\label{UB_TR1}
\left.
\begin{array}{rl}
(\mathscr{D}R_1)(X, X+H; Y, Y+J)\!
\\\noalign{\vskip 1mm}
(\mathscr{D}R_1)(X-H, X; Y-J, Y)\!
\end{array}\right\}
\ll_{k, \varepsilon, u_0} X^{\frac{1}{3}+\varepsilon} Y^{u_0+\varepsilon} H J
\end{equation}
uniformly for $(X, Y, H, J)$ in \eqref{Condition:XYHJ},
where the implied constant depends on $k$, $\varepsilon$ and $u_0$. 
\end{lemma}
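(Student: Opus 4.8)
The plan is to move the line of integration in $I_1$ from $\re z=\lambda$ down across the pole at $z=\tfrac13$ to $\re z=u_0$, then read off $P_1$ from the residue and $R_1$ from the shifted integral. Since $s_1(z)\equiv\tfrac13$ by \eqref{def:sjz}, the definition \eqref{def:Fk*} reads
$$
\mathfrak{F}_1^*(z)=\tfrac34\,\zeta(3z)\,\zeta\big(2z+\tfrac13\big)\,\zeta\big(z+\tfrac23\big)\,\mathfrak{G}\big(\tfrac13,z+k-1\big),
\qquad
I_1=\frac{X^{4/3}}{2\pi\mathrm{i}}\int_{(\lambda)}\frac{\mathfrak{F}_1^*(z)\,Y^{z+1}}{(z+k-1)(z+k)}\,\d z .
$$
By Lemma~\ref{Lem:FQsw}, specialising \eqref{cond:sigmau} to $s=\tfrac13$ yields the condition $3\re z\ge\tfrac12+\varepsilon$, so $\mathfrak{G}(\tfrac13,z+k-1)$ is holomorphic and $\ll_{k,\varepsilon}1$ for $\re z>\tfrac16+\tfrac\varepsilon3$; in particular it is regular throughout the strip $u_0\le\re z\le\lambda$, where the integrand therefore has a single singularity at $z=\tfrac13$. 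There each of the three zeta-factors contributes a simple pole, so the pole has order (at most) three, and the zeros $z=1-k,-k$ of the denominator lie to the left of $u_0$ because $k\ge2$. Using the convexity bound \eqref{UB:zeta} together with \eqref{UB:Gsw}, the integrand on the horizontal segments joining $\re z=u_0$ and $\re z=\lambda$ at height $T$ is $O_{k,\varepsilon,Y}(T^{-1-\delta})$ for some fixed $\delta>0$; hence Cauchy's theorem gives, for each fixed $Y\ge2$,
$$
I_1=X^{4/3}\operatorname*{Res}_{z=1/3}\frac{\mathfrak{F}_1^*(z)\,Y^{z+1}}{(z+k-1)(z+k)}+R_1(X,Y),
$$
with $R_1$ the integral over $(u_0)$ appearing in \eqref{def:R1XY}.

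To evaluate the residue I would set $z=\tfrac13+t$ and use $\zeta(1+ht)=(ht)^{-1}+\gamma+O(t)$ for $h=3,2,1$, so that $\zeta(3z)\zeta(2z+\tfrac13)\zeta(z+\tfrac23)=\tfrac1{6t^3}\big(1+c_1t+c_2t^2+O(t^3)\big)$ for absolute constants $c_1,c_2$, while $\tfrac34\,\mathfrak{G}(\tfrac13,z+k-1)Y^{z+1}/((z+k-1)(z+k))$ is holomorphic at $z=\tfrac13$ and, after writing $Y^{z+1}=Y^{4/3}\me^{t\log Y}$, equals $Y^{4/3}$ times a power series in $t$ whose $t^j$-coefficient is a polynomial in $\log Y$ of degree $j$. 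Picking off the coefficient of $t^{-1}$ then shows the residue equals $Y^{4/3}P_1(\log Y)$ for a polynomial $P_1$ of degree at most $2$, which is \eqref{Evaluate:I1}.

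It remains to estimate the difference operator applied to $R_1$. Writing $R_1(X,Y)=X^{4/3}f_2(Y)$ with $f_2(Y):=\tfrac1{2\pi\mathrm{i}}\int_{(u_0)}\mathfrak{F}_1^*(z)Y^{z+1}\,\d z/((z+k-1)(z+k))$, Lemma~\ref{lem3.2}(ii) gives
$$
(\mathscr{D}R_1)(X,X+H;\,Y,Y+J)=\big((X+H)^{4/3}-X^{4/3}\big)\big(f_2(Y+J)-f_2(Y)\big),
$$
and $(X+H)^{4/3}-X^{4/3}\ll X^{1/3}H$ for $1\le H\le\tfrac12X$. For the second factor, Lemma~\ref{Lem3.3} with $\beta=1-\varepsilon$ gives $|(Y+J)^{z+1}-Y^{z+1}|\ll Y^{u_0+\varepsilon}J(|\im z|+1)^{1-\varepsilon}$; on the line $\re z=u_0$ the arguments of the three zeta-factors of $\mathfrak{F}_1^*$ have real parts $3u_0\in(\tfrac12,\tfrac34)$, $2u_0+\tfrac13\in(\tfrac23,\tfrac56)$ and $u_0+\tfrac23\in(\tfrac56,\tfrac{11}{12})$, all in $[\tfrac12,2]$, so after dividing by $|(z+k-1)(z+k)|\asymp(|\im z|+1)^2$ and invoking \eqref{UB:Gsw}, a H\"older splitting with exponents $(4,4,2)$ together with the moment bound \eqref{2,4meanvalue} and integration by parts shows $\int_{\R}|\mathfrak{F}_1^*(u_0+\mathrm{i}v)|(|v|+1)^{-1-\varepsilon}\,\d v\ll_{k,\varepsilon,u_0}1$. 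Hence $f_2(Y+J)-f_2(Y)\ll_{k,\varepsilon,u_0}Y^{u_0+\varepsilon}J$, which gives \eqref{UB_TR1}; the lower difference $(\mathscr{D}R_1)(X-H,X;\,Y-J,Y)$ is handled identically. The only steps that demand real care are carrying out the residue extraction precisely and keeping the zeta-moment bookkeeping in the error term sharp, but the whole argument runs parallel to the treatment of $R_0$ in Lemma~\ref{Lem:MXYTU}.
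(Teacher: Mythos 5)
Your proof is correct and follows essentially the same route as the paper: shift the contour from $(\lambda)$ to $(u_0)$ past the triple pole at $z=\tfrac13$ (the unique singularity in the strip since $s_1(z)\equiv\tfrac13$ makes $\mathfrak{G}(\tfrac13,\cdot)$ regular there and $1-k,-k<0$), and then estimate $\mathscr{D}R_1$ by factorising it via Lemma~\ref{lem3.2}(ii) and controlling the $z$-integral with Lemma~\ref{Lem3.3}, H\"older with exponents $(4,4,2)$, and the moment bound \eqref{2,4meanvalue}. Your Laurent-series extraction of the residue and your use of the trivial bound $(X+H)^{4/3}-X^{4/3}\ll X^{1/3}H$ in place of a second appeal to Lemma~\ref{Lem3.3} are harmless cosmetic variants of the paper's residue formula \eqref{def:P1t} and its symmetric treatment of $r_{s_1(z),z}$.
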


\begin{proof}
We move the line of integration from $(\lambda)$ to $(u_0)$.
Obviously $z=\tfrac{1}{3}$  
is the unique pole of order 3 of the integrand in the strip $u_0\le u\le \lambda$, and 
the residue is $(XY)^{\frac{4}{3}} P_1(\log Y)$ with 
\begin{equation}\label{def:P1t}
P_1(t)
:= \frac{1}{2!} \bigg(
\frac{(z-\frac{1}{3})^3\mathfrak{F}_1^*(z) \mathrm{e}^{t(z-\frac{1}{3})}}{(z+k-1)(z+k)}
\bigg)''\bigg|_{z=\frac{1}{3}}.
\end{equation}
When $u_0\le u\le \lambda$, it is easy to check that
$$
\min_{0\le j\le 3} ((3-j)s_1(u)+ju)
\ge \tfrac{1}{2}+\varepsilon,
\qquad
\mathfrak{F}_1^*(z)\ll_{k, \varepsilon, u_0} (|v|+1)^{\frac{2}{3}-2u+\varepsilon}.
$$
Similar to \eqref{integral}, we can obtain \eqref{Evaluate:I1}.

To establish \eqref{UB_TR1}, we note that for $u=u_0$ we have, as before,
$$
\mathfrak{F}_1^*(z)
\ll_{k, \varepsilon, u_0} \zeta(3u_0+3\text{i}v) \zeta(2u_0+\tfrac{1}{3}+2\text{i}v) \zeta(u_0+\tfrac{2}{3}+\text{i}v)
$$
and, by Lemma \ref{Lem3.3} with $\beta=1-\varepsilon$, 
\begin{align*}
r_{s_1(z), z}(X, H; Y, J)
& := \big((X+H)^{4/3}-X^{4/3}\big) \big((Y+J)^{z+1}-Y^{z+1}\big)
\\
& \ll X^{\frac{1}{3}+\varepsilon} Y^{u_0+\varepsilon} H J (|v|+1)^{1-\varepsilon}. 
\end{align*}
With the help of \eqref{2,4meanvalue}, we can derive, as before, that
$$
\int_{-\infty}^{\infty} 
\frac{|\zeta(3u_0+3\text{i}v) \zeta(2u_0+\tfrac{1}{3}+2\text{i}v) \zeta(u_0+\tfrac{2}{3}+\text{i}v)|}{(|v|+1)^{1+\varepsilon}} \d v
\ll_{k, \varepsilon, u_0} 1.
$$
Combining these with Lemma \ref{lem3.2}(ii), we deduce that 
\begin{align*}
(\mathscr{D}R_1)(X, X+H; Y, Y+J)
& = \frac{1}{2\pi\text{i}} \int_{(u_0)} 
\frac{\mathfrak{F}_1^*(z)r_{s_1(z), z}(X, H; Y, J)}{(z+k-1)(z+k)} \d z
\\
& \ll_{k, \varepsilon, u_0} X^{\frac{1}{3}+\varepsilon} Y^{u_0+\varepsilon} H J, 
\end{align*}
from which the desired result follows. 
\end{proof}

\vskip 1mm

\begin{lemma}\label{Lem:Evaluate_I2}
For $X\ge 2$, $Y\ge 2$ and $u_0\in (\tfrac{1}{6}, \tfrac{1}{4})$ we have
\begin{equation}\label{Evaluate:I2}
I_2 = (XY)^{\frac{4}{3}} P_2(\log(X^{-\frac{1}{2}}Y)) + R_2(X, Y), 
\end{equation}
where $P_2(t)$ is quadratic polynomial and
\begin{equation}\label{def:R2XY}
R_2(X, Y)
:= \frac{1}{2\pi {\rm i}} \int_{(u_0)} 
\frac{\mathfrak{F}_2^*(z) X^{\frac{1}{2}(3-z)} Y^{z+1}}{(z+k-1)(z+k)} \d z.
\end{equation}
Further we have
\begin{equation}\label{UB_TR2}
\left.
\begin{array}{rl}
(\mathscr{D}R_2)(X, X+H; Y, Y+J)\!
\\\noalign{\vskip 1mm}
(\mathscr{D}R_2)(X-H, X; Y-J, Y)\!
\end{array}\right\}
\ll_{k, \varepsilon, u_0} X^{\frac{1}{2}(1-u_0)} Y^{u_0} HJ
\end{equation}
uniformly for $(X, Y, H, J)$ in \eqref{Condition:XYHJ},
where the implied constant depends on $k$, $\varepsilon$ and $u_0$.
\end{lemma}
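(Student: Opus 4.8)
The argument is entirely parallel to the proof of Lemma~\ref{Lem:Evaluate_I1}, and I only indicate the three places where the present case differs. First I would move the line of integration in $I_2$ from $(\lambda)$ to $(u_0)$ by the same device used to establish \eqref{integral}: truncate at height $T$, bound the two horizontal segments and the two tails by means of the convexity estimate \eqref{UB:zeta} together with $\mathfrak{G}\ll_{k,\varepsilon}1$ from \eqref{UB:Gsw} (which applies because, as in Lemma~\ref{Lem:MXYTU}, one checks that \eqref{cond:sigmau} holds with $s=s_2(z)$ and $w=z+k-1$ throughout $u_0\le u\le\lambda$; for $X$ below an absolute constant the asserted estimate is trivial, so we may assume $\mathcal{L}=\log X$ large and hence $\lambda$ close to $\tfrac13$), and then let $T\to\infty$. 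In the strip $u_0<u<\lambda$ the integrand has a single pole, at $z=\tfrac13$, of order exactly $3$: each of $\zeta(3z)$, $\zeta(\tfrac{1+3z}{2})$ and $\zeta(\tfrac{3-3z}{2})$ has its simple pole precisely at $z=\tfrac13$, while $\mathfrak{G}(s_2(z),z+k-1)$, $1/(s_2(z)(s_2(z)+1))$ and $1/((z+k-1)(z+k))$ are holomorphic and nonvanishing there (note $s_2(\tfrac13)=\tfrac13$, so $(3-j)s_2(\tfrac13)+j\cdot\tfrac13=1$ for each $j$ and \eqref{cond:sigmau} holds). Writing $X^{\frac12(3-z)}Y^{z+1}=(XY)^{4/3}(X^{-1/2}Y)^{z-1/3}$ and taking the residue at $z=\tfrac13$ produces the main term $(XY)^{4/3}P_2\!\big(\log(X^{-1/2}Y)\big)$ with
\begin{equation}\label{def:P2t}
P_2(t)
:= \frac{1}{2!} \bigg(\frac{(z-\frac{1}{3})^3\,\mathfrak{F}_2^*(z)\,\mathrm{e}^{t(z-\frac{1}{3})}}{(z+k-1)(z+k)}\bigg)''\bigg|_{z=\frac{1}{3}},
\end{equation}
a quadratic polynomial in $t$ (the coefficient of $(z-\tfrac13)^2$ in the relevant Laurent expansion involves $\mathrm{e}^{t(z-1/3)}$ only through its terms of order $\le 2$), while the shifted integral is precisely $R_2(X,Y)$ of \eqref{def:R2XY}. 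This proves \eqref{Evaluate:I2}.

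Next I would apply the difference operator; by Lemma~\ref{lem3.2}(ii),
$$
(\mathscr{D}R_2)(X,X+H;\,Y,Y+J)
=\frac{1}{2\pi\mathrm{i}}\int_{(u_0)}
\frac{\mathfrak{F}_2^*(z)\,\big((X+H)^{\frac12(3-z)}-X^{\frac12(3-z)}\big)\big((Y+J)^{z+1}-Y^{z+1}\big)}{(z+k-1)(z+k)}\,\d z .
$$
On the line $u=u_0$, Lemma~\ref{Lem3.3} with $\beta=1$ gives
$$
\big|(X+H)^{\frac12(3-z)}-X^{\frac12(3-z)}\big|\ll X^{\frac12(1-u_0)}H(|v|+1),
\qquad
\big|(Y+J)^{z+1}-Y^{z+1}\big|\ll Y^{u_0}J(|v|+1),
$$
while $|s_2(z)(s_2(z)+1)|\asymp(|v|+1)^2$ there and $\zeta(\tfrac{3-3z}{2})$ is bounded on $u=u_0$ (its real part $\tfrac{3-3u_0}{2}$ exceeds $1$ since $u_0<\tfrac14$); hence, by \eqref{UB:Gsw},
$$
\mathfrak{F}_2^*(z)\ll_{k,\varepsilon,u_0}\frac{\big|\zeta(3u_0+3\mathrm{i}v)\,\zeta(\tfrac{1+3u_0}{2}+\tfrac32\mathrm{i}v)\big|}{(|v|+1)^2}.
$$
Combining these with $1/|(z+k-1)(z+k)|\ll(|v|+1)^{-2}$, the integrand is $\ll_{k,\varepsilon,u_0}X^{\frac12(1-u_0)}Y^{u_0}HJ\cdot|\zeta(3u_0+3\mathrm{i}v)\zeta(\tfrac{1+3u_0}{2}+\tfrac32\mathrm{i}v)|\,(|v|+1)^{-2}$, and since $3u_0\in(\tfrac12,\tfrac34)$ and $\tfrac{1+3u_0}{2}\in(\tfrac34,\tfrac78)$ both lie in $[\tfrac12,2]$, Cauchy--Schwarz together with the mean-value bound \eqref{2,4meanvalue} and an integration by parts yields
$$
\int_{-\infty}^{\infty}\frac{\big|\zeta(3u_0+3\mathrm{i}v)\,\zeta(\tfrac{1+3u_0}{2}+\tfrac32\mathrm{i}v)\big|}{(|v|+1)^2}\,\d v\ll_{k,\varepsilon,u_0}1 .
$$
This gives the first inequality in \eqref{UB_TR2}, uniformly in the range \eqref{Condition:XYHJ}; the second, for $(\mathscr{D}R_2)(X-H,X;\,Y-J,Y)$, is obtained in exactly the same manner.

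The only genuine difference from Lemma~\ref{Lem:Evaluate_I1} — and the point I expect to require the most care — is that here the exponent $\tfrac12(3-z)$ of $X$ varies with $z$, so $(X+H)^{\frac12(3-z)}-X^{\frac12(3-z)}$ carries an extra factor $|v|+1$ beyond the one coming from the $Y$-difference. Convergence of the $z$-integral is rescued by the factor $1/(s_2(z)(s_2(z)+1))$ retained in the definition \eqref{def:Fk*} of $\mathfrak{F}_2^*$, which supplies a compensating $(|v|+1)^{-2}$; this is also why, unlike for $I_1$, only two of the three zeta-values sit in the critical strip on $u=u_0$ and a plain Cauchy--Schwarz split against the second moment of $\zeta$ is enough, rather than the $\tfrac14+\tfrac14+\tfrac12$ Hölder split used for $I_1$. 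Verifying that $u_0\in(\tfrac16,\tfrac14)$ makes all of these inequalities valid — $3u_0>\tfrac12$ for the moment estimates, $u_0<\tfrac14$ so that $\zeta(\tfrac{3-3z}{2})$ stays bounded on $u=u_0$, and \eqref{cond:sigmau} throughout $u_0\le u\le\lambda$ — is routine.
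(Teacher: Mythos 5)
Your proposal is correct and follows the same contour-shift argument as the paper: move $(\lambda)$ to $(u_0)$, pick up the triple pole at $z=\tfrac13$ with residue $(XY)^{4/3}P_2(\log(X^{-1/2}Y))$, and bound the shifted integral after applying Lemma~\ref{lem3.2}(ii) and Lemma~\ref{Lem3.3} with $\beta=1$. The only (minor) deviation is in the last step: you estimate the remaining $v$-integral by Cauchy--Schwarz against the second moment \eqref{2,4meanvalue}, whereas the paper simply uses the pointwise convexity bound from \eqref{UB:zeta} and \eqref{UB:Gsw} to show $\mathfrak{F}_2^*(z)\ll_{k,\varepsilon,u_0}(|v|+1)^{-c}$ with $c>1$ on $u=u_0$ --- the two powers of $|v|+1$ gained from $1/(s_2(z)(s_2(z)+1))$ and two more from $1/((z+k-1)(z+k))$ overwhelm the $(|v|+1)^2$ from $r_{s_2(z),z}$ and the modest growth of the two zeta factors in the critical strip, so no mean-value input is needed at all for $R_2$ (unlike for $R_0$ and $R_1$). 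Both routes deliver the same bound \eqref{UB_TR2}.
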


\begin{proof}
The proof is rather simlar to that of Lemma \ref{Lem:Evaluate_I1} and even simpler.
We move the line of integration from $(\lambda)$ to $(u_0)$.
Obviously $z=\tfrac{1}{3}$ is the unique pole of order 3 of the integrand in the strip $u_0\le u\le \lambda$, and 
the residue is $(XY)^{\frac{4}{3}} P_2\big(\log(X^{-\frac{1}{2}}Y)\big)$ with 
\begin{equation}\label{def:P2t}
P_2(t)
:= \frac{1}{2!} \bigg(
\frac{(z-\frac{1}{3})^3\mathfrak{F}_2^*(z) {\rm e}^{t(z-\frac{1}{3})}}{(z+k-1)(z+k)}
\bigg)''\bigg|_{z=\frac{1}{3}}.
\end{equation}

When $u_0\le u\le \lambda$, it is easy to check that
$$
\min_{0\le j\le 3} ((3-j)s_2(u)+ju)
\ge \tfrac{1}{2}+\varepsilon.
$$
It follows from \eqref{UB:zeta} and \eqref{UB:Gsw} that, for $u_0\le u\le \lambda$, 
$$
\mathfrak{F}_2^*(z)\ll_{k, \varepsilon, u_0} (|v|+1)^{\frac{1}{3}-u-2+\varepsilon}.
$$
These imply \eqref{Evaluate:I2}.
Further by Lemma \ref{Lem3.3} with $\beta=1$, 
\begin{align*}
r_{s_2(z), z}(X, H; Y, J)
& := \big((X+H)^{\frac{1}{2}(3-z)}-X^{\frac{1}{2}(3-z)}\big)\big((Y+J)^{z+1}-Y^{z+1}\big)
\\
& \ll X^{\frac{1}{2}(1-u_0)} Y^{u_0} HJ (|v|+1)^2. 
\end{align*}
Combining these with  Lemma \ref{lem3.2}(ii), we deduce that
\begin{align*}
(\mathscr{D}R_2)(X, X+H; Y, Y+J)
& = \frac{1}{2\pi\text{i}} \int_{(u_0)}  
\frac{\mathfrak{F}_2^*(z) r_{s_2(z), z}(X, H; Y, J)}{(z+k-1)(z+k)} \d z
\\
& \ll_{k, \varepsilon, u_0} X^{\frac{1}{2}(1-u_0)} Y^{u_0} HJ.
\end{align*}
This completes the proof.
\end{proof}

\subsection{Evaluation of $I_3$}

\begin{lemma}\label{Lem:Evaluate_I3}
For $X\ge 2$ and $Y\ge 2$ we have
\begin{equation}\label{Evaluate:I3}
I_3 = (XY)^{\frac{4}{3}} P_3\big(\log(X^{-2}Y)\big) + R_3(X, Y)
\end{equation}
where $P_3(t)$ is defined as in \eqref{def:P3t} below and
\begin{equation}\label{def:R3XY}
R_3(X, Y)
:= \frac{(XY)^{\frac{4}{3}}}{2\pi {\rm i}} \int_{(\lambda)} 
\frac{\mathfrak{F}_3^*(z) \big({\rm e}^{\xi (z-\frac{1}{3})}-\sum_{j=0}^2 \frac{1}{j!}\xi^j(z-\frac{1}{3})^j\big)}
{(z+k-1)(z+k)} \d z
\end{equation}
with $\xi=\log(X^{-2}Y)$.
Further we have
\begin{equation}\label{UB_TR3}
\left.
\begin{array}{rl}
(\mathscr{D}R_3)(X, X+H; Y, Y+J)\!
\\\noalign{\vskip 1mm}
(\mathscr{D}R_3)(X-H, X; Y-J, Y)\!
\end{array}\right\}
\ll_k \big(X^{-\frac{5}{3}} Y^{\frac{4}{3}}H^3  + X^{\frac{4}{3}} Y^{-\frac{5}{3}}J^3 \big) \mathcal{L}^4
\end{equation}
uniformly for $(X, Y, H, J)$ in \eqref{Condition:XYHJ},
where the implied constant depends on $k$ at most.
\end{lemma}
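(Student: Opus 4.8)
The statement has two halves, the exact evaluation \eqref{Evaluate:I3} and the bound \eqref{UB_TR3} for the difference operator applied to the remainder, and I would prove them in turn.

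\emph{The identity \eqref{Evaluate:I3}.} With $j=3$ one has $s_3(z)=1-2z$ by \eqref{def:sjz}, so the weight in \eqref{def:Ij} is $X^{2-2z}Y^{z+1}=(XY)^{4/3}{\rm e}^{\xi(z-1/3)}$ with $\xi:=\log(X^{-2}Y)$, whence
\[
I_3=\frac{(XY)^{4/3}}{2\pi{\rm i}}\int_{(\lambda)}g(z)\,{\rm e}^{\xi(z-1/3)}\,\d z,
\qquad
g(z):=\frac{\mathfrak{F}_3^*(z)}{(z+k-1)(z+k)}.
\]
By \eqref{def:Fk*} and Lemma~\ref{Lem:FQsw}, $g$ is holomorphic in the strip $\tfrac16<\Re z<\tfrac12$ except for a pole of order exactly $3$ at $z=\tfrac13$ — each of $\zeta(3z),\zeta(2-3z),\zeta(3-6z)$ contributing a simple pole there, while $\mathfrak{G}(s_3(z),z+k-1)=O_{k,\varepsilon}(1)$ — and, using \eqref{UB:zeta} together with the factors $s_3(z)(s_3(z)+1)$ and $(z+k-1)(z+k)$, one gets $g(z)\ll_{k,\varepsilon}|\Im z|^{-7/2}$ on vertical lines of that strip (indeed $\ll|\Im z|^{-4+\varepsilon}$ on $(\lambda)$ itself). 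Splitting ${\rm e}^{\xi(z-1/3)}=T_\xi(z)+\rho_\xi(z)$ with $T_\xi(z):=\sum_{j=0}^{2}\tfrac1{j!}\xi^j(z-\tfrac13)^j$ the degree-$2$ Taylor polynomial at $z=\tfrac13$, the $\rho_\xi$-contribution is precisely $R_3(X,Y)$ of \eqref{def:R3XY}, and $\rho_\xi$ vanishing to order $3$ at $z=\tfrac13$ makes $g\rho_\xi$ holomorphic there. For the $T_\xi$-contribution I would move the contour from $(\lambda)$ past the triple pole at $z=\tfrac13$ (the horizontal segments vanishing by the decay of $g$), collecting the residue; since $T_\xi$ agrees with ${\rm e}^{\xi(\,\cdot\,-1/3)}$ through its second derivative at $z=\tfrac13$, this residue is exactly $(XY)^{4/3}P_3(\xi)$ with $P_3$ as in \eqref{def:P3t}. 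Recombining the two contributions yields \eqref{Evaluate:I3}.

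\emph{The bound \eqref{UB_TR3}.} Pulling $\mathscr{D}$ inside the integral in \eqref{def:R3XY} and using \eqref{def:Df} gives
\[
(\mathscr{D}R_3)(X,X+H;\,Y,Y+J)=\frac1{2\pi{\rm i}}\int_{(\lambda)}g(z)\,(\mathscr{D}\phi_z)(X,X+H;\,Y,Y+J)\,\d z,
\]
where $\phi_z(X,Y):=X^{2-2z}Y^{z+1}-(XY)^{4/3}T_{\log(X^{-2}Y)}(z)=(XY)^{4/3}\sum_{l\ge3}\tfrac1{l!}(z-\tfrac13)^l(\log(X^{-2}Y))^l$. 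The heart of the argument is to bound $\mathscr{D}\phi_z$ for each fixed $z$: I would expand the above tail, use Lemma~\ref{lem3.2}(ii) to turn each $\mathscr{D}$ into a product of one-variable differences, estimate those by Lemma~\ref{Lem3.3} with the exponent $\beta$ chosen according to the size of $|\Im z|$, and exploit the cancellation built into subtracting the degree-$2$ Taylor polynomial so as to bring out the powers $H^3$ and $J^3$. Integrating over $(\lambda)$, the decay of $g$ together with the second/fourth-moment estimate \eqref{2,4meanvalue} for $\zeta$ along the relevant near-critical lines supplies the factor $\mathcal{L}^4$ and yields \eqref{UB_TR3} on $(X,X+H;Y,Y+J)$; the bound on $(X-H,X;Y-J,Y)$ is entirely analogous.

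\emph{Main obstacle.} The delicate point is precisely the extraction of the sharp $H^3+J^3$ dependence of $\mathscr{D}\phi_z$. Subtracting the degree-$2$ Taylor polynomial must be exploited simultaneously in two ways — it makes $\phi_z$, hence $\mathscr{D}\phi_z$, vanish to order $3$ at $z=\tfrac13$ so that the triple pole of $g$ is over-compensated and the $z$-integral converges with room to spare, and it is also what upgrades the cheap mixed-difference bound for $\mathscr{D}\phi_z$ into one carrying the third powers of $H/X$ and $J/Y$, uniformly for $(X,Y,H,J)$ in \eqref{Condition:XYHJ}. Once that is in hand, the remaining ingredients — the contour shifts, the residue identification of $P_3$, and the invocation of Lemmas~\ref{lem3.2} and~\ref{Lem3.3} and of the moment bound \eqref{2,4meanvalue} — are routine.
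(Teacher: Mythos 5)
Your proof of \eqref{Evaluate:I3} takes a wrong turn. In the paper, $P_3(t)=\sum_{j=0}^{2}\tfrac{a_j}{j!}t^j$ with
\[
a_j:=\frac{1}{2\pi{\rm i}}\int_{(\lambda)}\frac{\mathfrak{F}_3^*(z)(z-\tfrac13)^j}{(z+k-1)(z+k)}\,\d z,
\]
i.e.\ the coefficients are \emph{integrals along $(\lambda)$}, not residues. Consequently \eqref{Evaluate:I3} is immediate from the split ${\rm e}^{\xi(z-1/3)}=T_\xi(z)+\rho_\xi(z)$: integrating $g(z)T_\xi(z)$ along $(\lambda)$ \emph{is} $(XY)^{4/3}P_3(\xi)$ by definition, and the $\rho_\xi$-piece is $R_3$. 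No contour shift is needed, and in fact none is possible: unlike in the treatment of $I_1,I_2$ (where $P_1,P_2$ genuinely are residues collected after moving to $(u_0)$), for $I_3$ the integrand $g(z)T_\xi(z)$ has only polynomial decay in $|\Im z|$ once multiplied by $(z-\tfrac13)^2$, so the integral on a shifted line neither vanishes nor simplifies, and the residue of $gT_\xi$ at $z=\tfrac13$ is simply not the quantity defined in \eqref{def:P3t}. Your claim that "this residue is exactly $(XY)^{4/3}P_3(\xi)$" is therefore false.

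On \eqref{UB_TR3}, you identify the right object, $\phi_z(X,Y)=(XY)^{4/3}\sum_{l\ge3}\tfrac{1}{l!}(z-\tfrac13)^l\xi^l$ with $\xi=\log(X^{-2}Y)$, and the right tools (Lemmas~\ref{lem3.2} and~\ref{Lem3.3}, the bound \eqref{2,4meanvalue}), but you explicitly defer "the delicate point", the extraction of the $H^3+J^3$ factor. That extraction is the entire content of the estimate, and your sketch — binomially expand $\xi^l=(\log Y-2\log X)^l$ so that Lemma~\ref{lem3.2}(ii) applies termwise, then invoke Lemma~\ref{Lem3.3} — does not by itself produce the third-order cancellation: applying Lemma~\ref{lem3.2}(ii) to each monomial $X^{4/3}(\log X)^aY^{4/3}(\log Y)^b$ separately loses the information that the degree-$2$ Taylor jet has been removed. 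The paper's route is more direct: on $(\lambda)$ one has $|{\rm e}^{\xi(z-1/3)}|\asymp1$, hence $\rho_\xi(z)\ll|\xi|^3|z-\tfrac13|^3$, giving $R_3(X,Y)\ll(XY)^{4/3}|\xi|^3\mathcal{L}^4$; then (via Lemma~\ref{lem3.2}(ii) applied to the integrand, not to $R_3$ itself) the mixed fourth difference is controlled by values of this bound with $\xi$ replaced by increments of $\log(Y/X^2)$ of size $O(H/X+J/Y)$, which is exactly where the cubes of $H/X$ and $J/Y$ appear. So the idea you label as "routine" is the step that actually needs to be carried out, and as written your argument does not carry it out.
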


\begin{proof}
Putting $\xi=\log(X^{-2}Y)$, we can write
$$
I_3 
= \frac{(XY)^{\frac{4}{3}}}{2\pi {\rm i}} \int_{(\lambda)} 
\frac{\mathfrak{F}_3^*(z) {\rm e}^{\xi (z-\frac{1}{3})}}{(z+k-1)(z+k)} \d z
= (XY)^{\frac{4}{3}} P_3(\xi) + R_3(X, Y),
$$
where 
\begin{equation}\label{def:P3t}
P_3(t) = \sum_{j=0}^2 \frac{a_j}{j!} t^j
\quad\text{with}\quad
a_j := \frac{1}{2\pi {\rm i}} \int_{(\lambda)} 
\frac{\mathfrak{F}_3^*(z) (z-\frac{1}{3})^j}{(z+k-1)(z+k)} \d z.
\end{equation}
On the other hand, for $\Re e\, z =\lambda$, we have 
$|\text{e}^{\xi (z-\frac{1}{3})}|=\text{e}^{\xi (\lambda-\frac{1}{3})}\asymp 1$.
Thus we can write 
$$
{\rm e}^{\xi (z-\frac{1}{3})} - \sum_{j=0}^2 \frac{1}{j!} \xi^j (z-\tfrac{1}{3})^j
= O(\xi^3 |z-\tfrac{1}{3}|^3).
$$
From this and the bound $\mathfrak{F}_3^*(z)\ll \mathcal{L}^3(1+|v|)^{-2} \; (\Re e\, z =\lambda)$,
it is easy to deduce that
\begin{align*}
R_3(X, Y)
& \ll (XY)^{\frac{4}{3}} |\xi|^3 \mathcal{L}^4.
\end{align*}
According to Lemma \ref{lem3.2} {\rm (ii)}, $(\mathscr{D}R_3)(X, X+H; Y, Y+J)$ is actually a finite linear combination of values of $R_3(X, Y)$ for $\xi\ll H/X+J/Y$.
Hence we have 
\begin{align*}
(\mathscr{D}R_3)(X, X+H; Y, Y+J)\!
& \ll_k  (XY)^{\frac{4}{3}}((H/X)^3+(J/Y)^3)  \mathcal{L}^4.
\end{align*}
This proves the lemma. 
\end{proof} 

\subsection{Completion of proof of Proposition \ref{Asymp:SQxySWxy}} 
Denote by $\mathcal{M}_Q(X, Y)$ the main term in the asymptotic formula 
of $M_Q(X, Y)$ in Proposition~\ref{Pro:MQXY}, that is 
$$
\mathcal{M}_Q(X, Y) := X^{\frac{4}{3}} Y^{k+\frac{1}{3}} 
\big(P_1(\log Y) + P_2(\log(X^{-\frac{1}{2}}Y)) + P_3(\log(X^{-2}Y))\big).
$$ 
Then Lemma \ref{lem3.2}(i) gives 
\begin{equation}\label{def:PQ(t,u)}
\begin{aligned}
(\mathscr{D}\mathcal{M}_Q)(X, X+H; Y, Y+J)
& = Y^{k-1} \big\{(XY)^{\frac{1}{3}} P_Q(\log X, \log Y)
\\
& \quad
+ O(X^{\frac{1}{3}}Y^{-\frac{2}{3}}J\mathcal{L}^3+X^{-\frac{2}{3}}Y^{\frac{1}{3}}H\mathcal{L}^3)\big\}HJ.
\end{aligned}
\end{equation}
Since $\mathscr{D}$ is a linear operator, 
this together with Proposition~\ref{Pro:MQXY}, 
\eqref{UB_TR0}, \eqref{UB_TR1}, \eqref{UB_TR2} and \eqref{UB_TR3} 
with the choice of $\sigma_0=u_0=\tfrac{1}{6}+\varepsilon$
implies that
\begin{equation}\label{DMXHYJ}
(\mathscr{D}M_Q)(X, X+H; Y, Y+J)
= Y^{k-1} \big\{(XY)^{\frac{1}{3}} P_Q(\log X, \log Y) + O_{\varepsilon}(\mathcal{R} (XY)^{\varepsilon})\big\}HJ
\end{equation}
with
\begin{align*}
\mathcal{R}
& := X^{\frac{1}{6}} Y^{\frac{1}{3}}
+ X^{\frac{5}{12}} Y^{\frac{1}{6}} 
+ X^{-\frac{5}{3}} Y^{\frac{4}{3}} H^2 J^{-1} 
+ X^{\frac{4}{3}} Y^{-\frac{5}{3}}H^{-1} J^2  
+ X^{\frac{1}{3}}Y^{-\frac{2}{3}}J
+ X^{-\frac{2}{3}}Y^{\frac{1}{3}}H.
\end{align*}
And the same formula also holds for $(\mathscr{D}M_Q)(X-H, X; Y-J, Y)$.

Taking $H=X^{\frac{5}{6}}$ and $J=X^{-\frac{1}{6}}Y$,
Lemma \ref{lem3.1} and \eqref{DMXHYJ} give us
\begin{align*}
S_Q(X, Y) 
= Y^{k-1} \big\{(XY)^{\frac{1}{3}} P_Q(\log X, \log Y) 
+ O_{\varepsilon}(X^{\frac{1}{6}+\varepsilon}Y^{\frac{1}{3}+\varepsilon} + X^{\frac{5}{12}+\varepsilon} Y^{\frac{1}{6}+\varepsilon})\big\}.
\end{align*}
The estimate for $S_W(x,y)$ can be proved in the same way, and the only difference is the leading coefficients of polynomials.
This completes the proof of Proposition \ref{Asymp:SQxySWxy}.

\vskip 8mm

\section{Proofs of Theorems \ref{thm1} and \ref{thm2}}

By \eqref{def:NdaggerQ(B)} and \eqref{def:SQxy}, it follows that
$$
N^*_Q(B)
= \frac{2(2\pi)^{\frac{m}{2}}}{\Gamma(\frac{m}{2})\sqrt{|A|}} S_Q(B, B^2)
+ O_{\varepsilon}\big(m^{\frac{3m}{4}}\|Q\|^{\frac{m}{4}}B^{\frac{m+1}{2}+\varepsilon}\big)
$$
where we have used the following bound
\begin{equation}\label{UB:error}
\sum_{1\leq b \leq B} \sum_{1\leq n \leq B^2} \mathbb{1}_3(bn) n^{\frac{m-1}{4}+\varepsilon}
\ll\sum_{h\le B^3} \mathbb{1}_3(h) \sum_{n| h, \, n\le B^2} n^{\frac{m-1}{4}+\varepsilon}
\ll B^{\frac{m+1}{2}+\varepsilon}
\end{equation}
and the implied constant depends on $\varepsilon$ only.
The first formula in \eqref{NQB} follows immediately 
from \eqref{Cor:SQxx2} of Proposition \ref{Asymp:SQxySWxy}. 
We deduce the second formula in \eqref{NQB} from this and the inversion formula \eqref{MobInv} 
with $P_Q(t)$ determined by the following relation
\begin{equation}\label{def:PQ(t)}
\sum_{d\ge 1} \frac{\mu(d)}{d^{m-1}} P^*_Q\bigg(\log\bigg(\frac{B^{1/(m-1)}}{d}\bigg)\bigg)
= \frac{1}{(m-1)^2\zeta(m-1)} P_Q(\log B).
\end{equation}
We note that $P^*_Q(t)$ and $P_Q(t)$ have the same leading coefficients.
This proves Theorem \ref{thm1}. 

\medskip 

From \eqref{def:NdaggerQ(B)}, \eqref{formula:r(n,Q)} of Proposition \ref{Prop/Q=n},
\eqref{SinSer=:2}, Proposition \ref{SinSerEv} and \eqref{UB:error}, 
we can deduce that
$$
N^*_Q(B)
\le \varpi^+ \mathcal{C}^*_Q S_W(B, B^2)
+ O_{\varepsilon}\big(m^{\frac{3m}{4}}\|Q\|^{\frac{m}{4}}B^{\frac{m+1}{2}+\varepsilon}\big). 
$$
By \eqref{Cor:SWxx2} of Proposition \ref{Asymp:SQxySWxy}, we obtain that
$$
N^*_Q(B)
\le \varpi^+ \mathcal{C}^*_Q W^*.
$$
Similarly we can prove that $N^*_Q(B)\ge \varpi^- \mathcal{C}^*_Q W^*$.
This proves Theorem \ref{thm2}.

\vskip 8mm

\section{Proof of Corollary \ref{cor:sum-squares}}

In order to prove \eqref{cor:N*mB-NmB},
it is sufficient to show that when $Q=y_1^2+\cdots+y_m^2$ with $m=4k$, we have
\begin{equation}\label{cor:sum-squares:constant}
\mathcal{C}^*_Q \mathscr{C}_Q = \mathcal{C}_m^*.
\end{equation} 
For this, firstly let us recall some notations of \cite[pages 2039--2040]{LWZ19}:
\begin{equation}\label{G2}
\begin{aligned}
\mathscr{G}_2(s, w)
& := \prod_{1\le j\le 3} (1-2^{-(s+jw-j(2k-1))})
\\
& \quad
\times \bigg(1
+ a\frac{1+2^{-w+2k-1}+2^{-2w+2(2k-1)}}{2^{s+w-(2k-1)}-2^{-2w+2(2k-1)}} 
- b\frac{2^{-s-w}(1+2^{-w}+2^{-2w})}{1-2^{-s-3w}}\bigg),
\end{aligned}
\end{equation}
and
\begin{equation}\label{Gp}
\begin{aligned}
\mathscr{G}_p(s, w)
& := \bigg(
1
+ \frac{p^{2k-1}+1}{p^{s+w}}
+ \frac{p^{2(2k-1)}+p^{2k-1}+1}{p^{s+2w}}
+ \frac{p^{4k-2}+p^{2k-1}}{p^{s+3w}}
+ \frac{p^{4k-2}}{p^{2s+4w}}
\bigg)
\\
& \qquad
\times
\bigg(1-\frac{p^{2k-1}}{p^{s+w}}\bigg)
\bigg(1-\frac{p^{2(2k-1)}}{p^{s+2w}}\bigg)
\bigg(1-\frac{1}{p^{s+3w}}\bigg)^{-1}
\end{aligned}
\end{equation}
with
\begin{equation}\label{ab}
a := 1-\dfrac{(-1)^{k}}{1-2^{2k-1}}, 
\qquad
b := (-1)^{k} \dfrac{1-2^{2k}}{1-2^{2k-1}}\cdot
\end{equation}

We compute the 2-part of $\mathscr{C}_Q$ first.
By \eqref{sum-square:delta2}, we have 
\begin{align*}
\delta_2(2^\nu,Q)
& = 1 + (-1)^k \Big(\sum_{2\leq r \leq \nu}2^{(2k-1)(1-r)}-2^{-(2k-1)\nu}\Big)
= \begin{cases}
1 & \text{if $\nu=0$},
\\
a - b\cdot 2^{(1-2k)\nu} &  \text{if $\nu\geq 1$}.
\end{cases}
\end{align*}
Then we get 
$$
\sum_{\nu=0}^{3d} \delta_2(2^{\nu}, Q)
= 1 + 3ad
+ \frac{b}{1-2^{2k-1}}\big(1-2^{(1-2k)3d}\big)
$$
and
\begin{align*}
\sum_{d=0}^{\infty} \frac{\sum_{\nu=0}^{3d} \delta_2(2^{\nu}, Q)}{2^d}
& = 2 + 6a
+ \frac{b}{1-2^{2k-1}} \bigg(1-\frac{2^{2-6k}}{1-2^{2-6k}}\bigg)
\\
& = 2 \bigg(1 + 3a
- b\frac{2^{-2k}(1+2^{1-2k}+2^{2(1-2k)})}{1-2^{2-6k}}\bigg).
\end{align*}
So we have 
\begin{equation}\label{proof:cor-1}
\begin{aligned}
\bigg(1-\frac{1}{2}\bigg)^4 \sum_{d\geq 0} \frac{\sum_{\nu=0}^{3d} \delta_2(2^{\nu}, Q)}{2^d}
& = \mathscr{G}_2(1,2k-1).
\end{aligned}
\end{equation}
Since $D=2^m$, we have $\chi(p)=\big(\frac{4D}{p}\big)=\big(\frac{2^{\frac{m}{2}+1}}{p}\big)^2=1$ for all odd primes $p$. Thus 
\begin{equation}\label{proof:cor-2}
\bigg(1+\frac{2}{p}+\frac{3\chi}{p^\frac{m}{2}}+\frac{2\chi^2}{p^{m-1}}+\frac{\chi^2}{p^m}\bigg)
\bigg(1-\frac{1}{p}\bigg)^2 \bigg(1-\frac{1}{p^{\frac{3m}{2}-2}}\bigg)^{-1}
= \mathscr{G}_p(1,2k-1),
\end{equation}
since 
$$
L\big(\tfrac{3m}{2}-2,\chi\big)=\prod_{p\geq3}\bigg(1-\frac{1}{p^{\frac{3m}{2}-2}}\bigg)^{-1}.
$$
Noticing that $|A|=2^m$, formulae \eqref{Constants:calC*QCQ}, \eqref{def:scrCQ}, \eqref{proof:cor-1} and \eqref{proof:cor-2} allow us to write
$$
\mathcal{C}^*_Q \mathscr{C}_Q 
= \frac{\pi^{\frac{m}{2}}(1-2^{-\frac{m}{2}})^{-1}}{\Gamma(\frac{m}{2})(3m-4)\zeta(\frac{m}{2})}
\prod_{p} \mathscr{G}_p(1,2k-1).
$$
This implies the required formula \eqref{cor:sum-squares:constant} 
thanks to the relation $\zeta(m)=\frac{|B_m|(2\pi)^m}{2\cdot m!}$.

\vskip 8mm

\section{Proof of Corollary \ref{cor:level-one}}

In order to prove \eqref{cor:N*EB-NEB},
it is sufficient to show that when $Q$ is a quadratic form of level one in $m\equiv 0\,({\rm mod}\,8)$ variables, we have
\begin{equation}\label{cor:level-one:constant}
\mathcal{C}^*_Q \mathscr{C}_Q = \mathcal{C}^*_E.
\end{equation} 

For this, we compute the 2-part of $\mathscr{C}_Q$ first.
In view of \eqref{level-one:delta2}, we have 
\begin{equation}\label{proof:cor2-1}
\begin{aligned}
\sum_{d\geq 0}\frac{\sum_{\nu=0}^{3d}\delta_2(2^{\nu}, Q)}{2^d}
& = \frac{1-2^{-\frac{m}{2}}}{1-2^{1-\frac{m}{2}}}
\sum_{d\geq0} \frac{1}{2^d} 
\bigg(3d+1-\frac{2^{1-\frac{m}{2}}}{1-2^{1-\frac{m}{2}}} \big(1-2^{(1-\frac{m}{2})(3d+1)}\big)\bigg)
\\
& = \frac{1-2^{-\frac{m}{2}}}{1-2^{1-\frac{m}{2}}}
\bigg(8-\frac{2^{1-\frac{m}{2}}}{1-2^{1-\frac{m}{2}}}
\bigg(2-\frac{2^{1-\frac{m}{2}}}{1-2^{2-\frac{3m}{2}}}\bigg)\bigg)
\\
& = \frac{4(1-2^{-\frac{m}{2}})}{1-2^{-(\frac{3m}{2}-2)}}
\bigg(1 + \frac{2}{2} + \frac{3}{2^\frac{m}{2}} + \frac{2}{2^{m-1}} + \frac{1}{2^m}\bigg).
\end{aligned}
\end{equation}

Since $Q$ is of level one, 
we have $|A|=|D|=1$ and $\chi(p)=\big(\frac{4}{p}\big)=\big(\frac{2}{p}\big)^2=1$ for all odd primes $p$.
Thus 
\begin{equation}\label{proof:cor2-2}
\bigg(1-\frac{1}{p}\bigg)^2
\bigg(1+\frac{2}{p}+\frac{3\chi}{p^\frac{m}{2}}+\frac{2\chi^2}{p^{m-1}}+\frac{\chi^2}{p^m}\bigg)
= \bigg(1-\frac{1}{p}\bigg)^2
\bigg(1+\frac{2}{p}+\frac{3}{p^\frac{m}{2}}+\frac{2}{p^{m-1}}+\frac{1}{p^m}\bigg)
\end{equation}
and
\begin{equation}\label{proof:cor2-3}
\frac{L(\frac{3}{2}m-2,\chi^3)}{L(\frac{m}{2}, \chi)}\cdot
\frac{1-2^{-\frac{m}{2}}}{1-2^{-(\frac{3m}{2}-2)}}
= \frac{\zeta(\frac{3}{2}m-2)}{\zeta(\frac{m}{2}, \chi)}\cdot
\end{equation}
Inserting \eqref{proof:cor2-1}, \eqref{proof:cor2-2} and \eqref{proof:cor2-3} into \eqref{def:scrCQ}, we obtain
$$
\mathscr{C}_Q
= \frac{\zeta(\frac{3}{2}m-2)}{(6m-8)\zeta(\frac{m}{2})}
\prod_{p} \bigg(1-\frac{1}{p}\bigg)^2
\bigg(1+\frac{2}{p}+\frac{3}{p^\frac{m}{2}}+\frac{2}{p^{m-1}}+\frac{1}{p^m}\bigg).
$$
This implies the required formula \eqref{cor:level-one:constant} 
since $\mathcal{C}^*_Q = 2(2\pi)^\frac{m}{2}/\Gamma(\frac{m}{2})$ in this case.

\medskip 

\noindent{\bf Acknowledgements.} The authors would like to thank Yongqiang Zhao for deep discussions. 
While writing this paper, the authors paid multiple visits to each 
other in the past years, despite the difficulty caused by the Coronavirus. An important part of this paper 
was done during a visit of the second auther to Universit\'e Paris-Est Cr\'eteil. The manuscript was completed 
in a visit of one of the authors to a beautiful campus besides the Summer Palace. 
It is a pleasure to record our gratitude to all these institutions 
for their hospitality and support. 
This work is supported by the National Natural Science Foundation of China 
(Grant Nos. 12031008, 12071375, 11871307, 11971370 and 11771252), and the second author is supported by 
the China Scholarship Council.

\end{document}